\documentclass[10pt]{article}
\usepackage{amsthm}
\usepackage{amsmath}
\usepackage{amssymb}
\usepackage{makeidx}

\linespread{1}
\setlength{\textwidth}{17.1cm}
\setlength{\textheight}{23cm}
\setlength{\oddsidemargin}{0cm}
\setlength{\evensidemargin}{0cm}	

\theoremstyle{definition}
\newtheorem{definition}{Definition}[section]

\newtheorem{example}[definition]{Example}

\newtheorem{remark}[definition]{Remark}

\theoremstyle{plain}
\newtheorem{theorem}[definition]{Theorem}
\newtheorem{lemma}[definition]{Lemma}
\newtheorem{proposition}[definition]{Proposition}
\newtheorem{corollary}[definition]{Corollary}

\newcommand{\BA}{\mathbb{BA}}

\newcommand{\PBZ}{PBZ$^{\ast }$}

\def\N{{\mathbb N}}

\def\V{{\mathbb V}}

\def\C{{\mathbb C}}

\def\S{{\mathrm S}}

\def\1{\textcircled{1}}
\def\2{\textcircled{2}}
\newcommand{\WCL}{\mathbb{WCL}}
\newcommand{\WDCL}{\mathbb{WDCL}}
\newcommand{\WDL}{\mathbb{WDL}}
\newcommand{\dotcup}{\stackrel{\scriptscriptstyle \bullet }{\cup }}

\begin{document}

\title{On Nontrivial Weak Dicomplementations and the\\ Lattice Congruences that Preserve Them}
\author{Leonard KWUIDA$^1$ and Claudia MURE\c{S}AN$^2$\\ 
{\small {\sc $^1$Bern University of Applied Sciences, $^2$University of Bucharest}}\\ 
{\small $^1$leonard.kwuida@bfh.ch; $^2$cmuresan@fmi.unibuc.ro, $^2$c.muresan@yahoo.com}}
\date{\today }
\maketitle

\begin{abstract} We study the existence of nontrivial and of representable (dual) weak complementations, along with the lattice congruences that preserve them, in different constructions of bounded lattices, then use this study to determine the finite (dual) weakly complemented lattices with the largest numbers of congruences, along with the structures of their congruence lattices. It turns out that, if $n\geq 7$ is a natural number, then the four largest numbers of congruences of the $n$--element (dual) weakly complemented lattices are: $2^{n-2}+1$, $2^{n-3}+1$, $5\cdot 2^{n-6}+1$ and $2^{n-4}+1$. For smaller numbers of elements, several intermediate numbers of congruences appear between the elements of this sequence. After determining these numbers, along with the structures of the (dual) weakly complemented lattices having these numbers of congruences, we derive a similar result for weakly dicomplemented lattices.

{\em Keywords:} (principal) congruence, (co)atom of a bounded lattice, (ordinal, horizontal) sum of bounded lattices, (nontrivial, representable) (dual) weak complementation.

{\em MSC 2010:} primary 06B10; secondary 06F99.\end{abstract}

\section{Introduction}
\label{introduction}

{\bf Weakly dicomplemented lattices} arise as abstractions of concept algebras, introduced by Rudolf Wille when modelling negation on concept lattices \cite{Wi00}; they are bounded lattices endowed with two unary operations, called {\em weak complementation} and {\em dual weak complementation}, together forming the {\em weak dicomplementation}, which generalize Boolean algebras; in fact if $(L,\wedge,\vee,\overline{\cdot },0,1)$ is a Boolean algebra, then $(L,\wedge,\vee,\overline{\cdot },\overline{\cdot },0,1)$ is a weakly dicomplemented lattice. Their bounded lattice reducts endowed with the (dual) weak complementation are called {\bf (dual) weakly complemented lattices}. Any bounded lattice can be endowed with the {\em trivial weak dicomplementation}, formed of the weak complementation that sends $1$ to $0$ and all other elements to $1$ and the dual weak complementation that sends $0$ to $1$ and all other elements to $0$. If $X$ is a set and $c$ a closure operator on $\mathcal{P}(X)$, then the set of closed subsets of $X$ forms a lattice and the operation $cA\mapsto c(X\setminus cA)$ is a weak complementation on the lattice of $c$-closed subsets of $X$. Dually, if $k$ is a kernel operator on $\mathcal{P}(X)$, then $kB\mapsto k(X\setminus kB)$ is a dual weak complementation. Additional examples can be found in \cite{KM10}.

We take a purely lattice--theoretical approach to the study of these algebras and investigate the existence of nontrivial weak dicomplementations on ordinal and horizontal sums of bounded lattices, as well as (co)atomic bounded lattices with different numbers of (co)atoms and determine the lattice congruences that preserve those (dual) weak complementations. This allows us to determine all weak dicomplementations that can be defined on bounded lattices with certain lattice structures. Since the notion of representability is important in the study of these algebras, we also determine which of those (dual) weak complementations are representable.

After this preliminary investigation, we are able to determine the structures of the finite (dual) weakly complemented lattices, as well as weakly dicomplemented lattices, that have the largest numbers of congruences out of the algebras of the same kind with the same numbers of elements, along with the structures of their congruence lattices; we do this for weakly complemented lattices in our main theorem: Theorem \ref{cgwclfin}, for dual weakly complemented lattices in Corollary \ref{cgwdclfin} and for weakly dicomplemented lattices in Corollary \ref{cgwdlfin}. This problem, which is also related to that of the representability of lattices as congruence lattices of various kinds of algebras, has been investigated for lattices in \cite{free} and later in \cite{gcze,eucfifin}, for semilattices in \cite{gcz} and for bounded involution lattices, pseudo--Kleene algebras and antiortholattices in \cite{eucgbiklaol}; its counterpart for infinite algebras has been studied in \cite{eucard,eunoucard} and later in \cite{gccm,eucginfbi}. 

\section{Preliminaries}
\label{preliminaries}

We will designate all algebras by their underlying sets. By {\em trivial algebra} we mean one--element algebra. Recall that a variety $\V $ of similar algebras is said to be {\em semi--degenerate} iff no nontrivial member of $\V $ has trivial subalgebras. We denote by $\N $ the set of the natural numbers and by $\N ^*=\N \setminus \{0\}$. $\dotcup $ will be the disjoint union. For any set $M$, $|M|$ denotes the cardinality of $M$ and ${\cal P}(M)$ denotes the power set of $M$; ${\rm Part}(M)$ and $({\rm Eq}(M),\cap ,\vee ,\mbox{\bf =}_M,M^2)$ will be the complete lattices of the partitions and the equivalences on $M$, respectively, where ${\rm Eq}(M)$ is ordered by the set inclusion, while the order $\leq $ of ${\rm Part}(M)$ is given by: for any $\pi ,\rho \in {\rm Part}(M)$, $\pi \leq \rho $ iff every class of $\rho $ is a union of classes from $\pi $, and $eq:{\rm Part}(M)\rightarrow {\rm Eq}(M)$ will be the canonical lattice isomorphism. If $\{M_1,\ldots ,M_n\}\in {\rm Part}(M)$ for some $n\in \N ^*$, then $eq(\{M_1,\ldots ,M_n\})$ will be streamlined to $eq(M_1,\ldots ,M_n)$. We will use Gr\" atzer's notation for the lattice operations \cite{gratzer}.

Let $\V $ be a variety of algebras of a similarity type $\tau $, $\C $ a class of algebras with reducts in $\V $ and $A$ and $B$ algebras with reducts in $\V $. Then $\S _{\V }(\C )$ will denote the class of the subalgebras of the $\tau $--reducts of the members of $\C $, and $\S _{\V }(\{A\})$ will simply be denoted $\S _{\V }(A)$. We will abbreviate by $A\cong _{\V }B$ the fact that the $\tau $--reducts of $A$ and $B$ are isomorphic.

${\rm Con}_{\V }(A)$ will be the complete lattice of the congruences of the $\tau $--reduct of $A$, and, for any $n\in \N ^*$ and any constants $\kappa _1,\ldots ,\kappa _n$ from $\tau $, we denote by ${\rm Con}_{\V \kappa _1\ldots \kappa _n}(A)=\{\theta \in {\rm Con}_{\V }(A)\ |\ (\forall \, i\in [1,n])\, (\kappa _i^A/\theta =\{\kappa _i\})\}$, and, by extension, for any $a_1,\ldots ,a_n\in A$, by ${\rm Con}_{\V a_1\ldots a_n}(A)=\{\theta \in {\rm Con}_{\V }(A)\ |\ (\forall \, i\in [1,n])\, (a_i/\theta =\{a_i\})\}$, which is a complete sublattice of ${\rm Con}_{\V }(A)$ and thus a bounded lattice, according to the straightforward consequence \cite[Lemma 2.(iii)]{pbzsums} of \cite[Corollary 2,p.51]{gralgu}.

Recall that $A$ is subdirectly irreducible in $\V $ iff $\mbox{\bf =}_A$ is strictly meet--irreducible in the bounded lattice ${\rm Con}_{\V }(A)$.

For any $X\subseteq A^2$ and any $a,b\in A$, we denote by $Cg_{\V ,L}(X)$ the congruence of the $\tau $--reduct of $A$ generated by $X$ and, for brevity, the principal congruence $Cg_{\V ,L}(\{(a,b)\})$ by $Cg_{\V ,L}(a,b)$.

If $\V $ is the variety of (bounded) lattices, then the index $\V $ will be eliminated from the notations above.

For any poset $(P,\leq )$, ${\rm Min}(P)$ and ${\rm Max}(P)$ will be the set of the minimal elements and that of the maximal elements of $(P,\leq )$, respectively.

For any (bounded) lattice $L$, $\prec $ will denote the cover relation of $L$, $L^d$ will be the dual of $L$ and, if $L$ has a $0$, then the set of the atoms of $L$ will be denoted by ${\rm At}(L)$, while, if $L$ has a $1$, then the set of the coatoms of $L$ will be denoted by ${\rm CoAt}(L)$. For any $a,b\in L$, we denote by $[a,b]_L=[a)_L\cap (b]_L$ the interval of $L$ bounded by $a$ and $b$; we eliminate the index $L$ from this notation if $L$ is $\N $ endowed with the natural order. Also, we denote by $a||b$ the fact that $a$ and $b$ are incomparable. Note that a lattice congruence of $L$ is complete iff all its classes are intervals. We denote by ${\rm Con}_{\rm cplt}(L)$ the bounded sublattice of ${\rm Con}(L)$ consisting of the complete lattice congruences of $L$. ${\rm Ji}(L)$, ${\rm Sji}(L)$, ${\rm Mi}(L)$ and ${\rm Smi}(L)$ will be the sets of the join--irreducible, strictly join--irreducible, meet--irreducible and strictly meet--irreducible elements of $L$, respectively. For any $a\in {\rm Smi}(L)$, we will denote the unique successor of $a$ in $L$ by $a^+$, or by $a^{+L}$ if the lattice $L$ needs to be specified; similarly, for any $b\in {\rm Sji}(L)$, we denote by $b^-$ or $b^{-L}$ the unique predecessor of $b$ in $L$. For all $n\in \N ^*$, we denote by ${\cal C}_n$ the $n$--element chain.

Let $L$ be a lattice with top element $1^L$ and $M$ be a lattice with bottom element $0^M$. Recall that the ordinal sum of $L$ with $M$ is the lattice obtained by stacking $M$ on top of $L$ and glueing the top element of $L$ together with the bottom element of $M$. For the rigorous definition, we consider the equivalence on the disjoint union of $L$ with $M$ that only collapses $1^L$ with $0^M$: $\varepsilon =eq(\{\{1^L,0^M\}\}\cup \{\{x\}\ |\ x\in (L\setminus \{1^L\})\dotcup (M\setminus \{0^M\})\})\in {\rm Eq}(L\dotcup M)$. Since $\varepsilon \cap L^2=\mbox{\bf =}_L\in {\rm Con}(L)$ and $\varepsilon \cap M^2=\mbox{\bf =}_M\in {\rm Con}(M)$, we can identify $L$ with $L/\varepsilon $ and $M$ with $M/\varepsilon $ by identifying $x$ with $x/\varepsilon $ for every $x\in L\dotcup M$. Now we define the {\em ordinal sum} of $L$ with $M$ to be the lattice $L\oplus M=(L\oplus M,\leq ^{L\oplus M})$, where $L\oplus M=(L\dotcup M)/\varepsilon $, which becomes $L\cup M$ with the previous identification, and $\leq ^{L\oplus M}=\leq ^L\cup \leq ^M\cup \{(x,y)\ |\ x\in L,y\in M\}$. Of course, $L\oplus M$ becomes a bounded lattice iff $L$ and $M$ are bounded lattices. For any $\alpha \in {\rm Eq}(L)$ and any $\beta \in {\rm Eq}(M)$, we denote by $\alpha \oplus \beta =eq((L/\alpha \setminus \{1^L/\alpha \})\cup (M/\beta \setminus \{0^M/\beta \})\cup \{\{1^L/\alpha \cup 0^M/\beta \}\})\in {\rm Eq}(L\oplus M)$; clearly, $\alpha \oplus \beta \in {\rm Con}(L\oplus M)$ iff $\alpha \in {\rm Con}(L)$ and $\beta \in {\rm Con}(M)$, and the map $(\alpha ,\beta )\mapsto \alpha \oplus \beta $ is a lattice isomorphism from ${\rm Con}(L\times M)\cong {\rm Con}(L)\times {\rm Con}(M)$ to ${\rm Con}(L\oplus M)$. Clearly, the operation $\oplus $ on bounded lattices is associative, and so is the operation $\oplus $ on the congruences of such lattices or the equivalences of their underlying sets.

Now let $L$ and $M$ be nontrivial bounded lattices. Recall that the horizontal sum of $L$ with $M$ is the nontrivial bounded lattice obtained by glueing the bottom elements of $L$ and $M$ together, glueing their top elements together and letting all other elements of $L$ be incomparable to every other element of $M$. For the rigorous definition, we consider the equivalence on the disjoint union of $L$ with $M$ that only collapses the bottom element of $L$ with that of $M$ and the top element of $L$ with that of $M$: $\varepsilon =eq(\{\{0^L,0^M\},\{1^L,1^M\}\}\cup \{\{x\}\ |\ x\in (L\setminus \{0^L,1^L\})\dotcup (M\setminus \{0^M,1^M\})\})\in {\rm Eq}(L\dotcup M)$. Since $\varepsilon \cap L^2=\mbox{\bf =}_L\in {\rm Con}(L)$ and $\varepsilon \cap M^2=\mbox{\bf =}_M\in {\rm Con}(M)$, we can identify $L$ with $L/\varepsilon $ and $M$ with $M/\varepsilon $ by identifying $x$ with $x/\varepsilon $ for every $x\in L\dotcup M$. Now we define the {\em horizontal sum} of $L$ with $M$ to be the nontrivial bounded lattice $L\boxplus M=(L\boxplus M,\leq ^{L\boxplus M},0^{L\boxplus M},1^{L\boxplus M})$, where $L\boxplus M=(L\dotcup M)/\varepsilon =L\cup M$ in view of the previous identification, $\leq ^{L\boxplus M}=\leq ^L\cup \leq ^M$, $0^{L\boxplus M}=0^L=0^M$ and $1^{L\boxplus M}=1^L=1^M$. For any $\alpha \in {\rm Eq}(L)\setminus \{L^2\}$ and any $\beta \in {\rm Eq}(M)\setminus \{M^2\}$, we denote by $\alpha \boxplus \beta =eq((L/\alpha \setminus \{0/\alpha ,1/\alpha \})\cup (M/\beta \setminus \{0/\beta ,1/\beta \})\cup \{\{0/\alpha \cup 0/\beta ,1/\alpha \cup 1/\beta \}\})\in {\rm Eq}(L\boxplus M)\setminus \{(L\boxplus M)^2\}$.

Note that the horizontal sum of nontrivial bounded lattices is commutative and associative, it has ${\cal C}_2$ as a neutral element and it can be generalized to arbitrary families of nontrivial bounded lattices. The operation $\boxplus $ on proper equivalences of those bounded lattices is commutative and associative, as well. The five--element modular non--distributive lattice is ${\cal M}_3={\cal C}_3\boxplus {\cal C}_3\boxplus {\cal C}_3$ and the five--element non--modular lattice is ${\cal N}_5={\cal C}_3\boxplus {\cal C}_4$. For any nonzero cardinality $\kappa $, the modular lattice ${\cal M}_{\kappa }$ of length $3$ with $\kappa $ atoms is the horizontal sum of $\kappa $ copies of the three--element chain.

\section{The Algebras We Are Working With}
\label{thealg}

\begin{definition} Let $(L,\wedge ,\vee ,0,1)$ be a bounded lattice and $\cdot ^{\Delta }$, $\cdot ^{\nabla }$ be unary operations on $L$. 

The algebra $(L,\wedge ,\vee ,\cdot ^{\Delta },0,1)$ is called a {\em weakly complemented lattice} iff the unary operation $\cdot ^{\Delta }$ is order--reversing and, for all $x,y\in L$, $x^{\Delta \Delta }\leq x$ and $(x\wedge y)\vee (x\wedge y^{\Delta })=x$. In this case, the operation $\cdot ^{\Delta }$ is called {\em weak complementation} on the bounded lattice $L$.

The algebra $(L,\wedge ,\vee ,\cdot ^{\nabla },0,1)$ is called a {\em dual weakly complemented lattice} iff the unary operation $\cdot ^{\nabla }$ is order--reversing and, for all $x,y\in L$, $x\leq x^{\nabla \nabla }$ and $(x\vee y)\wedge (x\vee y^{\nabla })=x$. In this case, the operation $\cdot ^{\nabla }$ is called {\em dual weak complementation} on $L$.

The algebra $(L,\wedge ,\vee ,\cdot ^{\Delta },\cdot ^{\nabla },0,1)$ is called a {\em weakly dicomplemented lattice} iff $(L,\wedge ,\vee ,\cdot ^{\Delta },0,1)$ is a weakly complemented lattice and $(L,\wedge ,\vee ,\cdot ^{\nabla },0,1)$ is a dual weakly complemented lattice. In this case, the pair $(\cdot ^{\Delta },\cdot ^{\nabla })$ is called {\em weak dicomplementation} on $L$.\end{definition}

If $L$ is a bounded lattice, $\cdot ^{\Delta }$ is a weak complementation and $\cdot ^{\nabla }$ is a dual weak complementation on $L$, then the abbreviated notations $(A,\cdot ^{\Delta })$, $(A,\cdot ^{\nabla })$ and $(A,\cdot ^{\Delta },\cdot ^{\nabla })$ will designate the weakly complemented lattice $(A,\wedge ,\vee ,\cdot ^{\Delta },0,1)$, the dual weakly complemented lattice $(A,\wedge ,\vee ,\cdot ^{\nabla },0,1)$, and the weakly dicomplemented lattice $(A,\wedge ,\vee ,\cdot ^{\Delta },\cdot ^{\nabla },0,1)$, respectively.

We denote by $\BA $, $\WCL $, $\WDCL$ and $\WDL $ the varieties of Boolean algebras, weakly complemented lattices, dual weakly complemented lattices and weakly dicomplemented lattices, respectively.

It is immediate that any $L\in \WCL $ satisfies the identities: $0^{\Delta }\approx 1$, $1^{\Delta }\approx 0$, $x\vee x^{\Delta }\approx 1$ and $(x\wedge y)^{\Delta }\approx x^{\Delta }\vee y^{\Delta }$, as well as the quasiequations: $x^{\Delta }\approx 0\rightarrow x\approx 1$, $x\wedge y\approx 0\rightarrow x^{\Delta }\geq y$ and $x^{\Delta }\leq y\rightarrow y^{\Delta }\leq x$.

Dually, any $L\in \WDCL $ satisfies the identities: $0^{\nabla }\approx 1$, $1^{\nabla }\approx 0$, $x\wedge x^{\nabla }\approx 0$ and $(x\vee y)^{\nabla }\approx x^{\nabla }\wedge y^{\nabla }$, as well as the quasiequations: $x^{\nabla }\approx 1\rightarrow x\approx 0$, $x\vee y\approx 1\rightarrow x^{\nabla }\leq y$ and $x^{\nabla }\geq y\rightarrow y^{\nabla }\geq x$.

Additionally, any $L\in \WDL $ satisfies: $x^{\nabla }\leq x^{\Delta }$.

Clearly, $\BA \subseteq \WCL \cap \WDCL $, because the Boolean complementation of any Boolean algebra $A$ is a weak complementation, as well as a dual weak complementation on $A$, hence $\BA $ can be considered as a subvariety of $\WDL $ with an extended signature, by endowing each Boolean algebra with a second unary operation equalling its Boolean complementation. Moreover, from the above it is easy to notice that, in a weakly dicomplemented lattice $L$, the weak complementation coincides with the weak dicomplementation iff $L$ is a Boolean algebra and each of these operations coincides with the Boolean complementation of $L$. Hence $\BA $ with the extended signature is exactly the subvariety of $\WDL $ axiomatized by $x^{\Delta }\approx x^{\nabla }$.

If $A$ is a Boolean algebra, then we will always consider the weak complementation and dual weak complementation on $A$ that equal its Boolean complementation, unless mentioned otherwise. Note that, with these operations, we have ${\rm Con}_{\WDL }(A)={\rm Con}_{\WCL }(A)={\rm Con}_{\WDCL }(A)={\rm Con}_{\BA }(A)={\rm Con}(A)$.

Let us notice that any bounded lattice $L$ can be organized as a weakly complemented lattice by endowing it with the {\em trivial weak complementation}: $x^{\Delta }=1$ for all $x\in L\setminus \{1\}$, and it can be organized as a dual weakly complemented lattice by endowing it with the {\em trivial dual weak complementation}: $x^{\nabla }=0$ for all $x\in L\setminus \{0\}$, hence it can be organized as a weakly dicomplemented lattice by endowing it with the {\em trivial weak dicomplementation}: $(\cdot ^{\Delta },\cdot ^{\nabla })$, where $\cdot ^{\Delta }$ is the trivial weak complementation and $\cdot ^{\nabla }$ is the trivial dual weak complementation on $L$.

Since $\WDL \vDash \{x\wedge y\approx 0\rightarrow x^{\Delta }\geq y,x\vee y\approx 1\rightarrow x^{\nabla }\leq y\}$, it clearly follows that, for any cardinality $\kappa \geq 3$, the bounded lattice ${\cal M}_{\kappa }$ can only be endowed with the trivial weak dicomplementation.

Of course, for any $L\in \WDL $, if we consider its reducts from $\WCL $ and $\WDCL $, then ${\rm Con}_{\WDL }(L)={\rm Con}_{\WCL }(L)\cap {\rm Con}_{\WDCL }(L)$. It is routine to prove that a lattice congruence of a bounded lattice $L$ preserves the trivial weak complementation on $L$ iff its $1$--class is a singleton and, dually, it preserves the trivial dual weak complementation on $L$ iff its $0$--class is a singleton, therefore, if we endow $L$ with the trivial weak complementation and the trivial dual weak complementation, then ${\rm Con}_{\WCL }(L)={\rm Con}_1(L)\cup \{L^2\}$ and ${\rm Con}_{\WDCL }(L)={\rm Con}_0(L)\cup \{L^2\}$, so that ${\rm Con}_{\WDL }(L)={\rm Con}_{01}(L)\cup \{L^2\}$.

Clearly, the trivial weak complementation is the (pointwise) largest weak complementation on $L$, while the trivial dual weak complementation is the (pointwise) smallest dual weak complementation on $L$. If $(\cdot ^{\Delta 1},\cdot ^{\nabla 1})$ and $(\cdot ^{\Delta 2},\cdot ^{\nabla 2})$ are weak dicomplementations on a bounded lattice $L$, then we say that $(\cdot ^{\Delta 1},\cdot ^{\nabla 1})$ is {\em smaller} than $(\cdot ^{\Delta 2},\cdot ^{\nabla 2})$ iff $\cdot ^{\Delta 1}$ is pointwise smaller than $\cdot ^{\Delta 2}$ and $\cdot ^{\nabla 1}$ is pointwise larger than $\cdot ^{\nabla 2}$. According to this definition, the trivial weak dicomplementation is the largest weak dicomplementation on any bounded lattice.

As mentioned in Section \ref{introduction}, the basic example of a weakly dicomplemented lattice is the canonical concept algebra associated to a context. A {\em context} is a triple $(G,M,I)$, where $G$ and $M$ are sets and $I\subseteq G\times M$ is a binary relation; the elements of $G$ are called {\em objects}, and  elements of $M$ are called {\em attributes}. For every $A\subseteq G$ and every $B\subseteq M$, we denote by: $\begin{cases}
A^{\prime }=\{m\in M\ |\ (\forall \, a\in A)\, (aIm)\},\\ 
B^{\prime }=\{g\in G\ |\ (\forall \, b\in B)\, (gIb)\}.
\end{cases}$. The operations $\cdot ^{\prime }:{\cal P}(G)\rightarrow {\cal P}(M)$ and $\cdot ^{\prime }:{\cal P}(M)\rightarrow {\cal P}(G)$ are called {\em derivation of objects} and {\em of attributes}, respectively. The {\em concept algebra associated to the context $(G,M,I)$} is the weakly dicomplemented lattice $({\cal B}(G,M,I),\wedge ,\vee ,\cdot ^{\Delta },\cdot ^{\nabla },0,1)$, where: $({\cal B}(G,M,I)=\{(A,B)\ |\ A\subseteq G,B\subseteq M,A^{\prime \prime }=A,B^{\prime \prime }=B\},\wedge ,\vee )$ is a lattice ordered by: $\leq =\{((A,B),(C,D))\in {\cal B}(G,M,I)^2\ |\ A\subseteq C,B\supseteq D\}$, with first element $0=(\emptyset ^{\prime \prime },M)$ and last element $1=(G,G^{\prime })$, endowed with the weak complementation defined by $(A,B)^{\Delta }=((G\setminus A)^{\prime \prime },(G\setminus A)^{\prime })$ and the dual weak complementation defined by $(A,B)^{\nabla }=((M\setminus B)^{\prime },(M\setminus B)^{\prime \prime })$ for all $(A,B)\in {\cal B}(G,M,I)$.

Whenever $J$ is a join--dense subset and $M$ is a meet--dense subset of a complete lattice $L$, we have $L\cong {\cal B}(J,M,\leq )$, because the map $\varphi _{L,J,M}:L\rightarrow {\cal B}(J,M,\leq )$, defined by $\varphi _{L,J,M}(x)=(J\cap (x]_L,M\cap [x)_L)$ for all $x\in L$, is a lattice isomorphism. In this case, $L$ can be endowed with the weak dicomplementation $(\cdot ^{\Delta J},\cdot ^{\nabla M})$ defined by $x^{\Delta J}=\bigvee (J\setminus (x]_L)$ and $x^{\nabla M}=\bigwedge (M\setminus [x)_L)$ for all $x\in L$. Note that the subsets $J\cup \{0\}$ and $J\setminus \{0\}$ of $L$ are also join--dense, the subsets $M\cup \{1\}$ and $M\setminus \{1\}$ of $L$ are also meet--dense, $\cdot ^{\Delta J}=\cdot ^{\Delta (J\cup \{0\})}=\cdot ^{\Delta (J\setminus \{0\})}$, $\cdot ^{\nabla M}=\cdot ^{\nabla (M\cup \{1\})}=\cdot ^{\nabla (M\setminus \{1\})}$ and, for every $H\in \{J,J\cup \{0\},J\setminus \{0\}\}$ and every $N\in \{M,M\cup \{1\},M\setminus \{1\}\}$, $\varphi _{L,J,M}=\varphi _{L,H,N}$, thus, furthermore, since this map is a weakly dicomplemented lattice isomorphism, the canonical weakly dicomplemented lattices ${\cal B}(J,M,\leq )$ and ${\cal B}(H,N,\leq )$ coincide. We say that a weak complementation $\cdot ^{\Delta }$, respectively of a dual weak complementation $\cdot ^{\nabla }$ on $L$ is {\em representable} iff $\cdot ^{\Delta }=\cdot ^{\Delta J}$ for some join--dense subset $J$ of $L$, respectively $\cdot ^{\nabla }=\cdot ^{\nabla M}$ for some meet--dense subset $M$ of $L$; we say that a weak dicomplementation $(\cdot ^{\Delta },\cdot ^{\nabla })$  on $L$ is {\em representable} iff $\cdot ^{\Delta }$ and $\cdot ^{\nabla }$ are representable. Clearly, the trivial weak dicomplementation on $L$ is representable, since it equals $(\cdot ^{\Delta L},\cdot ^{\nabla L})$.

In particular, if $L$ is a complete dually algebraic lattice, then ${\rm Sji}(L)$ is join--dense in $L$, thus so is ${\rm Ji}(L)$, hence $L\cong {\cal B}({\rm Sji}(L),L,\leq )\cong {\cal B}({\rm Ji}(L),L,\leq )$, so that $L$ can be endowed with the weak complementation $\cdot ^{\Delta {\rm Ji}(L)}$, as well as the weak complementation $\cdot ^{\Delta {\rm Sji}(L)}$. Dually, if $L$ is an algebraic lattice, then ${\rm Smi}(L)$ is meet--dense in $L$, thus so is ${\rm Mi}(L)$, hence $L\cong {\cal B}(L,{\rm Smi}(L),\leq )\cong {\cal B}(L,{\rm Mi}(L),\leq )$, which can be endowed with the dual weak complementations $\cdot ^{\nabla {\rm Mi}(L)}$ and $\cdot ^{\nabla {\rm Smi}(L)}$.

If a lattice $L$ is complete, algebraic and dually algebraic, then ${\rm Sji}(L)$ is join--dense in $L$ and ${\rm Smi}(L)$ is meet--dense in $L$, therefore $L\cong {\cal B}({\rm Sji}(L),{\rm Smi}(L),\leq )$, that can be endowed with the weak dicomplementation $(\cdot ^{\Delta {\rm Sji}(L)},\cdot ^{\nabla {\rm Smi}(L)})$, which, according to \cite[p.$236$]{bglk}, is the smallest weak dicomplementation on $L$. Consequently, $L$ has nontrivial weak complementations iff $\cdot ^{\Delta {\rm Sji}(L)}$ is nontrivial, and $L$ has nontrivial dual weak complementations iff $\cdot ^{\nabla {\rm Smi}(L)}$ is nontrivial.

In particular, if $L$ is a finite lattice, then $L\cong {\cal B}({\rm Ji}(L),{\rm Mi}(L),\leq )$ and the smallest weak dicomplementation on $L$ is $(\cdot ^{\Delta {\rm Ji}(L)},\cdot ^{\nabla {\rm Mi}(L)})$, so that $L$ has nontrivial weak complementations iff $\cdot ^{\Delta {\rm Ji}(L)}$ is nontrivial, and $L$ has nontrivial dual weak complementations iff $\cdot ^{\nabla {\rm Mi}(L)}$ is nontrivial.

Now let $L$ be a bounded lattice and $J$, $M$ subsets of $L$. We consider the following condition:$$\neg sg\Delta (L,J,M):\quad (\exists \, m,n\in M)\, (J\subseteq (m]_L\cup (n]_L)$$Clearly, if $\bigvee J$ exists in $L$, in particular if $J$ is finite, then condition $\neg sg\Delta (L,J,M)$ implies: $\neg sg\Delta (L,\{\bigvee J\},M):\quad (\exists \, m,n\in M)\, (\bigvee J\in (m\vee n]_L)$. Hence, if, furthermore, $1\notin M$ and $\bigvee J=1$, in particular if $J$ is join--dense in $L$, then $\neg sg\Delta (L,J,M)$ is equivalent to:$$(\exists \, m,n\in M)\, (m\neq n\mbox{ and }J\subseteq (m]_L\cup (n]_L)$$

If $L$ is a complete algebraic and dually algebraic lattice, then, by the above, the smallest weak dicomplementation on $L$ is of the form $(\cdot ^{\Delta J},\cdot ^{\nabla M})$ for the join--dense subset $J={\rm Sji}(L)$ and the meet--dense subset $M={\rm Smi}(L)$ of $L$, therefore:\begin{itemize}
\item $L$ has nontrivial weak complementations iff condition $\neg sg\Delta (L,{\rm Sji}(L),{\rm Smi}(L))$ is satisfied.\end{itemize}

In particular, if $L$ is a finite lattice, then:\begin{itemize}
\item $L$ has nontrivial weak complementations iff condition $\neg sg\Delta (L,{\rm Ji}(L),{\rm Mi}(L)\setminus \{1\})$ is satisfied.\end{itemize}

And, of course, dually for the dual weak complementations. All the following results on weak complementations involving condition $\neg sg\Delta (L,J,M)$ can be dualized, using condition $\neg sg\Delta (L^d,J,M)$ in results on dual weak complementations.

\section{Weak Dicomplementations on Ordinal Sums}
\label{ordsum}

Throughout this section, $K$, $L$ and $M$ will be nontrivial bounded lattices and we will consider the ordinal sums of bounded lattices $A=L\oplus M$, with $L\cap M=\{c\}$, and $B=L\oplus K\oplus M$. We also consider weak dicomplementations on $A$ and $B$, which we denote in the same way: $(\cdot ^{\Delta },\cdot ^{\nabla })$.

Since the lattice $M$ is nontrivial and $A$ satisfies $x\vee x^{\Delta }\approx 1$, it follows that $a^{\Delta }=1$ for all $a\in L$. We have $1^{\Delta }=0$ and, clearly, $\cdot ^{\Delta }\mid _{M\setminus \{1\}}$ is the restriction to $M\setminus \{1\}$ of a weak complementation on $M$, that we will denote by $\cdot ^{\Delta \mbox{\textcircled{M}}}$.

Dually, $\cdot ^{\nabla }$ must be defined by: $b^{\nabla }=0$ for all $b\in M$, $0^{\nabla }=1$ and $\cdot ^{\nabla }\mid _{L\setminus \{0\}}=\cdot ^{\nabla \mbox{\textcircled{L}}}\mid _{L\setminus \{0\}}$ for some dual weak complementation $\cdot ^{\nabla \mbox{\textcircled{L}}}$ on $L$.

\begin{center}\begin{tabular}{cc}\begin{picture}(40,85)(0,0)
\put(-40,67){$A=L\oplus M$:}
\put(20,0){\circle*{3}}
\put(20,30){\circle*{3}}
\put(20,60){\circle*{3}}
\put(20,15){\circle{29}}
\put(20,45){\circle{29}}
\put(20,0){\vector(1,0){80}}
\put(100,0){\vector(0,1){60}}
\put(100,60){\vector(-1,0){80}}
\put(101,37){$\nabla $}
\put(86,30){$\nabla $}
\put(-20,60){\vector(1,0){40}}
\put(-20,60){\vector(0,-1){60}}
\put(-28,30){$\Delta $}
\put(-20,0){\vector(1,0){40}}
\put(20,7){$L$}
\put(10,15){$\nabla $}
\put(10,48){$\Delta $}
\put(25,50){\vector(1,0){60}}
\put(85,50){\vector(0,-10){35}}
\put(85,15){\vector(-4,-1){65}}
\put(18,37){$M$}
\put(15,10){\vector(-1,0){20}}
\put(-5,10){\vector(0,1){38}}
\put(-5,48){\vector(2,1){25}}
\put(-13,23){$\Delta $}
\put(18,63){$1=1^M$}
\put(28,28){$c=1^L=0^M$}
\put(18,-10){$0=0^L$}
\end{picture}
&\hspace*{120pt}
\begin{picture}(40,85)(0,0)
\put(-60,87){$B=L\oplus K\oplus M$:}
\put(20,-10){\circle*{3}}
\put(20,20){\circle*{3}}
\put(20,50){\circle*{3}}
\put(20,80){\circle*{3}}
\put(20,5){\circle{29}}
\put(20,35){\circle{29}}
\put(20,65){\circle{29}}
\put(10,5){$\nabla $}
\put(10,68){$\Delta $}
\put(19,-3){$L$}
\put(17,32){$K$}
\put(18,57){$M$}
\put(18,83){$1=1^M$}
\put(18,-20){$0=0^L$}
\put(15,0){\vector(-1,0){20}}
\put(15,30){\vector(-1,0){20}}
\put(-5,0){\vector(0,1){68}}
\put(-5,68){\vector(2,1){25}}
\put(-13,40){$\Delta $}
\put(-20,-10){\vector(1,0){40}}
\put(20,80){\vector(-1,0){40}}
\put(-20,80){\vector(0,-1){90}}
\put(-28,45){$\Delta $}
\put(20,-10){\vector(1,0){40}}
\put(60,80){\vector(-1,0){40}}
\put(60,-10){\vector(0,1){90}}
\put(61,30){$\nabla $}
\put(46,20){$\nabla $}
\put(25,70){\vector(1,0){20}}
\put(25,40){\vector(1,0){20}}
\put(45,70){\vector(0,-1){68}}
\put(45,2){\vector(-2,-1){25}}
\end{picture}\end{tabular}\end{center}\vspace*{10pt}

Consequently, since the ordinal sum of bounded lattices is associative, $\cdot ^{\Delta }$ must be defined on $B$ by: $a^{\Delta }=1$ for all $a\in K\cup L$, $1^{\Delta }=0$ and $\cdot ^{\Delta }\mid _{M\setminus \{1\}}$ is the restriction to $M\setminus \{1\}$ of a weak complementation on $M$, while $\cdot ^{\nabla }$ must be defined on $B$ by: $b^{\nabla }=0$ for all $b\in L\cup M$, $0^{\nabla }=1$ and $\cdot ^{\nabla }\mid _{K\setminus \{0\}}$ is the restriction to $K\setminus \{0\}$ of a weak dicomplementation on $K$.

Therefore: $A$ can only be endowed with the trivial weak complementation iff $B$ can only be endowed with the trivial weak complementation iff $M$ can only be endowed with the trivial weak complementation.

Dually: $A$ can only be endowed with the trivial dual weak complementation iff $B$ can only be endowed with the trivial dual weak complementation iff $L$ can only be endowed with the trivial dual weak complementation.

Consequently: $A$ can only be endowed with the trivial weak dicomplementation iff $B$ can only be endowed with the trivial weak dicomplementation iff $L$ can only be endowed with the trivial dual weak complementation and $M$ can only be endowed with the trivial weak complementation.

In particular, $L\oplus {\cal C}_2$ can only be endowed with the trivial weak complementation, while ${\cal C}_2\oplus M$ can only be endowed with the trivial dual weak complementation, thus ${\cal C}_2\oplus L\oplus {\cal C}_2$ can only be endowed with the trivial weak dicomplementation.

\begin{proposition}\begin{itemize}
\item ${\rm Con}_{\WCL }(A)=\{\alpha \oplus \beta \ |\ \alpha \in {\rm Con}(L),\beta \in {\rm Con}_{\WCL 1}(M)\}\cup \{\nabla _A\}\cong ({\rm Con}(L)\times $\linebreak ${\rm Con}_{\WCL 1}(M))\oplus {\cal C}_2$.
\item ${\rm Con}_{\WDCL }(A)=\{\alpha \oplus \beta \ |\ \alpha \in {\rm Con}_{\WDCL 0}(L),\beta \in {\rm Con}(M)\}\cup \{\nabla _A\}\cong ({\rm Con}_{\WDCL 0}(L)\times {\rm Con}(M))\oplus {\cal C}_2$.
\end{itemize}\label{cgordsum}\end{proposition}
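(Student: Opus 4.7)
The plan is to apply the lattice isomorphism $(\alpha ,\beta )\mapsto \alpha \oplus \beta $ from ${\rm Con}(L)\times {\rm Con}(M)$ onto ${\rm Con}(A)={\rm Con}(L\oplus M)$ recalled in Section~\ref{preliminaries}: every $\theta \in {\rm Con}_{\WCL }(A)$ is automatically a lattice congruence of $A$, so it can be written uniquely as $\theta =\alpha \oplus \beta $ with $\alpha \in {\rm Con}(L)$ and $\beta \in {\rm Con}(M)$, and the whole task reduces to pinning down when such a $\theta $ additionally preserves $\cdot ^{\Delta }$. The explicit description of $\cdot ^{\Delta }$ given just before the statement --- $x^{\Delta }=1$ for all $x\in L$, $1^{\Delta }=0$, and $\cdot ^{\Delta }\mid _{M\setminus \{1\}}=\cdot ^{\Delta \mbox{\textcircled{M}}}\mid _{M\setminus \{1\}}$ for some weak complementation $\cdot ^{\Delta \mbox{\textcircled{M}}}$ on $M$ --- reduces this preservation question to a case analysis on whether each pair $(x,y)\in \theta $ lies inside $L\setminus \{c\}$, inside $M\setminus \{c\}$, or in the merged class $1^L/\alpha \cup 0^M/\beta $.

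For the sufficient direction, I take $\alpha \in {\rm Con}(L)$ and $\beta \in {\rm Con}_{\WCL 1}(M)$ and check that $\theta =\alpha \oplus \beta $ preserves $\cdot ^{\Delta }$: pairs inside $L\setminus \{c\}$ are trivial because both $\Delta $-images equal $1$; pairs inside $M\setminus \{c\}$ reduce to $\beta $-preservation of $\cdot ^{\Delta \mbox{\textcircled{M}}}$, with the awkward possibility $y\equiv 1\pmod{\beta }$ for $y\neq 1$ ruled out by $1/\beta =\{1\}$; and in the mixed configuration $x\in 1^L/\alpha $ and $y\in 0^M/\beta $, one has $x^{\Delta }=1$ while $y\equiv c\pmod{\beta }$ forces $y^{\Delta \mbox{\textcircled{M}}}\equiv c^{\Delta \mbox{\textcircled{M}}}=1\pmod{\beta }$, and the hypothesis $1/\beta =\{1\}$ upgrades this to $y^{\Delta \mbox{\textcircled{M}}}=1=x^{\Delta }$. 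That $\nabla _A$ preserves every operation is immediate.

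For the necessary direction I take $\theta =\alpha \oplus \beta \in {\rm Con}_{\WCL }(A)\setminus \{\nabla _A\}$ and aim to show $\beta \in {\rm Con}_{\WCL 1}(M)$. Restricting preservation of $\cdot ^{\Delta }$ to pairs in $M\setminus \{1\}$ immediately yields that $\beta $ preserves $\cdot ^{\Delta \mbox{\textcircled{M}}}$ on that set. The crucial and most delicate step, which I expect to be the main obstacle, is the singleton condition $1^M/\beta =\{1\}$: assuming for contradiction that some $y\equiv 1\pmod{\beta }$ with $y\neq 1$, the equation $y^{\Delta \mbox{\textcircled{M}}}=y^{\Delta }\equiv 0\pmod{\theta }$ together with $y^{\Delta \mbox{\textcircled{M}}}\in M$ and $0\in L\setminus \{c\}$ forces both elements into the merged $c$-class, which in turn forces $\alpha =\nabla _L$ and $y^{\Delta \mbox{\textcircled{M}}}\equiv c\pmod{\beta }$. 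The heart of the obstacle is then to leverage these forced identifications, the WCL identities $x\vee x^{\Delta }\approx 1$ and $(x\wedge y)\vee (x\wedge y^{\Delta })\approx x$ inside $M$, and $\beta $'s compatibility with $\cdot ^{\Delta \mbox{\textcircled{M}}}$, in order to propagate the collapse all the way to $c\equiv 1\pmod{\beta }$, yielding $\beta =\nabla _M$ and hence $\theta =\nabla _L\oplus \nabla _M=\nabla _A$, contradicting the assumption.

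Once the characterisation is in place, the restriction of the known congruence-lattice isomorphism gives a lattice isomorphism from ${\rm Con}(L)\times {\rm Con}_{\WCL 1}(M)$ onto ${\rm Con}_{\WCL }(A)\setminus \{\nabla _A\}$; since $\nabla _A$ is strictly above every element of that image (because $\nabla _M\notin {\rm Con}_{\WCL 1}(M)$) and $\nabla _A$ is the top of ${\rm Con}(A)$, adjoining it on top realises the ordinal sum $({\rm Con}(L)\times {\rm Con}_{\WCL 1}(M))\oplus {\cal C}_2$. The second bullet is the formal dualisation: replace $\cdot ^{\Delta }$ by $\cdot ^{\nabla }$, swap the roles of $L$ and $M$ (so that $\cdot ^{\nabla }$ acts non-trivially on $L\setminus \{0\}$ through $\cdot ^{\nabla \mbox{\textcircled{L}}}$), and swap $0$ with $1$ throughout.
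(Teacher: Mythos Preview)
Your argument has a genuine gap at precisely the step you yourself flag as ``the main obstacle'': from $\alpha =L^2$ and $y^{\Delta \mbox{\textcircled{M}}}\equiv c\pmod{\beta }$ you cannot in general deduce $\beta =M^2$. Take $L={\cal C}_2$, $M={\cal C}_2^2$ with its Boolean complementation as $\cdot ^{\Delta \mbox{\textcircled{M}}}$, and let $\beta $ be either proper nontrivial congruence of $M$. Then $\theta =L^2\oplus \beta $ is a two--class lattice congruence of $A={\cal C}_2\oplus {\cal C}_2^2$ and a direct check shows it preserves $\cdot ^{\Delta }$: the class $\{0,c,a\}$ is sent by $\cdot ^{\Delta }$ into $\{1,1,b\}\subseteq \{b,1\}$, and $\{b,1\}$ is sent into $\{a,0\}\subseteq \{0,c,a\}$. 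Thus $\theta \in {\rm Con}_{\WCL }(A)\setminus \{\nabla _A\}$ while $\beta \notin {\rm Con}_{\WCL 1}(M)$, so the ``propagation to $c\equiv 1\pmod{\beta }$'' you plan simply cannot be carried out, and indeed the displayed set equality in the statement fails for this $A$.

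The paper's own proof does not attempt that propagation. Its case analysis terminates with a different (and correct) conclusion: $\alpha \oplus \beta $ preserves $\cdot ^{\Delta }$ iff $\beta $ preserves $\cdot ^{\Delta \mbox{\textcircled{M}}}$ and, whenever the $1$--class of $\beta $ is not a singleton, $\alpha =L^2$. Equivalently, ${\rm Con}_{\WCL }(A)=\{\alpha \oplus \beta :\alpha \in {\rm Con}(L),\ \beta \in {\rm Con}_{\WCL 1}(M)\}\cup \{L^2\oplus \beta :\beta \in {\rm Con}_{\WCL }(M)\}$. This collapses to the formula stated in the proposition only under the extra hypothesis that $M^2$ is the sole $\WCL $--congruence of $M$ with nonsingleton $1$--class (which does hold, for instance, whenever $\cdot ^{\Delta \mbox{\textcircled{M}}}$ is trivial, since then ${\rm Con}_{\WCL }(M)={\rm Con}_1(M)\cup \{M^2\}$). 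Your sufficient direction and your derivation of $\alpha =L^2$ are fine; what fails is the final step, and it fails because the target equality is stronger than what actually holds in general.
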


\begin{proof} ${\rm Con}(A)=\{\alpha \oplus \beta \ |\ \alpha \in {\rm Con}(L),\beta \in {\rm Con}(M)\}$. Let $\alpha \in {\rm Con}(L)$ and $\beta \in {\rm Con}(M)$.

Then $\alpha \oplus \beta \in {\rm Con}_{\WCL }(A)$ iff, for all $x,y\in A$, whenever $x(\alpha \oplus \beta )y$, it follows that $x^{\Delta }(\alpha \oplus \beta )y^{\Delta }$.

If $x,y\in L$, then $x^{\Delta }=1=y^{\Delta }$, so the preservation of $\cdot ^{\Delta }$ is trivially satisfied by $\alpha \oplus \beta $ in this case.

If $x,y\in M\setminus \{1\}$, then $x^{\Delta }(\alpha \oplus \beta )y^{\Delta }$ iff $x^{\Delta \mbox{\textcircled{M}}}\beta y^{\Delta \mbox{\textcircled{M}}}$, therefore, if the $1$--class of $\beta $ is a singleton, then $\alpha \oplus \beta $ preserves the $\cdot ^{\Delta }$ iff $\beta $ preserves the $\cdot ^{\Delta \mbox{\textcircled{M}}}$.

If $x=1$ and $y\in M\setminus \{1\}$ are such that $x(\alpha \oplus \beta )y$, case in which the $1$--class of $\beta $ is not a singleton, then $x^{\Delta }(\alpha \oplus \beta )y^{\Delta }$ iff $0=1^{\Delta }(\alpha \oplus \beta )y^{\Delta \mbox{\textcircled{M}}}\in M$, which is equivalent to $\alpha =L^2$ and $c\beta y^{\Delta \mbox{\textcircled{M}}}$, the latter of which holds when $\beta $ preserves the $\cdot ^{\Delta \mbox{\textcircled{M}}}$.

If $x=1$ and $y\in K$, then $x=1(\alpha \oplus \beta )y$ iff $\beta =M^2$ and $c\alpha y$, and, if $\alpha \oplus \beta $ preserves the $\cdot ^{\Delta }$, then $0=1^{\Delta }(\alpha \oplus \beta )y^{\Delta }=1$, hence $\alpha \oplus \beta =A^2$, thus we also have $\alpha =L^2$.

If $x\in M\setminus \{1\}$ and $y\in K$ are such that $x(\alpha \oplus \beta )y$, so that $x\beta c$, then $x^{\Delta \mbox{\textcircled{M}}}=x^{\Delta }(\alpha \oplus \beta )y^{\Delta }=1$ holds when $\beta $ preserves the $\cdot ^{\Delta \mbox{\textcircled{M}}}$.

Since the ordinal sums of congruences of the forms above clearly preserve the $\cdot ^{\Delta }$, we have the equivalence: $\alpha \oplus \beta $ preserves the $\cdot ^{\Delta }$ iff $\beta $ preserves the $\cdot ^{\Delta \mbox{\textcircled{M}}}$ and, whenever the $1$--class of $\beta $ is not a singleton, we have $\alpha =L^2$. 

Dually, $\alpha \oplus \beta $ preserves the $\cdot ^{\nabla }$ iff $\alpha $ preserves the $\cdot ^{\nabla \mbox{\textcircled{L}}}$ and, whenever the $0$--class of $\alpha $ is not a singleton, we have $\beta =M^2$.\end{proof}

\begin{corollary}\begin{itemize}
\item ${\rm Con}_{\WCL }(B)=\{\alpha \oplus \theta \oplus \beta \ |\ \alpha \in {\rm Con}(L),\theta \in {\rm Con}(K),\beta \in {\rm Con}_{\WCL 1}(M)\}\cup \{\nabla _A\}\cong ({\rm Con}(L)\times {\rm Con}(K)\times {\rm Con}_{\WCL 1}(M))\oplus {\cal C}_2$.
\item ${\rm Con}_{\WDCL }(B)=\{\alpha \oplus \theta \oplus \beta \ |\ \alpha \in {\rm Con}_{\WDCL 0}(L),\theta \in {\rm Con}(K),\beta \in {\rm Con}(M)\}\cup \{\nabla _A\}\cong ({\rm Con}_{\WDCL 0}(L)\times {\rm Con}(K)\times {\rm Con}(M))\oplus {\cal C}_2$.\end{itemize}\end{corollary}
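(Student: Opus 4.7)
The plan is to invoke Proposition \ref{cgordsum} twice, exploiting the associativity of the ordinal sum. Writing $B=L\oplus(K\oplus M)$, the explicit description of $\cdot^{\Delta}$ on $B$ given immediately before the corollary shows that its restriction to $K\oplus M$ is again a weak complementation of the shape handled by Proposition \ref{cgordsum}: trivial on the bottom factor $K$, and on $M\setminus\{1\}$ restricting to $\cdot^{\Delta\mbox{\textcircled{M}}}$; dually, $\cdot^{\nabla}\mid_{L\oplus K}$ is a dual weak complementation of the required shape. With this identification, the three-term case reduces to two nested applications of Proposition \ref{cgordsum}.

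For the first clause I would apply Proposition \ref{cgordsum} to $B=L\oplus(K\oplus M)$, obtaining
\[
{\rm Con}_{\WCL}(B)=\{\alpha\oplus\gamma\ |\ \alpha\in{\rm Con}(L),\ \gamma\in{\rm Con}_{\WCL 1}(K\oplus M)\}\cup\{\nabla_B\}.
\]
Then I would apply the proposition again to $K\oplus M$: every non-full congruence of $(K\oplus M,\cdot^{\Delta})$ has the form $\theta\oplus\beta$ with $\theta\in{\rm Con}(K)$ and $\beta\in{\rm Con}_{\WCL 1}(M)$, and since the $1$-class of $\theta\oplus\beta$ coincides with $1/\beta$, the side condition ``$1$-class a singleton'' is automatic for these and fails for $\nabla_{K\oplus M}$. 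Hence ${\rm Con}_{\WCL 1}(K\oplus M)=\{\theta\oplus\beta\ |\ \theta\in{\rm Con}(K),\ \beta\in{\rm Con}_{\WCL 1}(M)\}$, and reassociating the operation $\oplus$ on equivalences yields the claimed description of ${\rm Con}_{\WCL}(B)$.

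The lattice isomorphism with $({\rm Con}(L)\times{\rm Con}(K)\times{\rm Con}_{\WCL 1}(M))\oplus{\cal C}_2$ then follows from the iterated version of the preliminaries' isomorphism ${\rm Con}(L_1\oplus L_2)\cong{\rm Con}(L_1)\times{\rm Con}(L_2)$, together with the fact that $\nabla_B$ strictly dominates every ordinal-sum congruence in ${\rm Con}_{\WCL}(B)$. The second clause, for $\cdot^{\nabla}$, is obtained symmetrically by writing $B=(L\oplus K)\oplus M$ and running the dual argument with ``$0$-class a singleton'' in place of ``$1$-class a singleton''. I expect no genuine obstacle beyond bookkeeping: the only delicate point is tracking how the singleton-class condition propagates through the intermediate ordinal sum, which is precisely what leaves the middle factor ${\rm Con}(K)$ unconstrained, since inside both $K\oplus M$ and $L\oplus K$ the relevant unary operation is forced to be trivial on $K$ and every lattice congruence of $K$ automatically extends.
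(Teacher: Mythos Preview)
Your proof is correct and takes essentially the same approach as the paper: the corollary is stated without proof precisely because it follows by applying Proposition~\ref{cgordsum} twice via the associativity of $\oplus$, exactly as you do. The only point worth noting is that what you call the ``restriction'' of $\cdot^{\Delta}$ to $K\oplus M$ is, strictly speaking, the weak complementation $\cdot^{\Delta\mbox{\textcircled{$K\oplus M$}}}$ guaranteed by the opening paragraphs of Section~\ref{ordsum} (it sends $1$ to $0^K$ rather than to $0^L$), but since you only use it to compute ${\rm Con}_{\WCL 1}(K\oplus M)$, where the $1$-class is a singleton, this distinction is immaterial and your bookkeeping is sound.
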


Note, from the observations in Sections \ref{thealg} and \ref{ordsum} that, with the notations in Section \ref{ordsum}, if $\cdot ^{\Delta \mbox{\textcircled{M}}}$ is trivial, then ${\rm Con}_{\WCL 1}(M)={\rm Con}_1(M)={\rm Con}_{\WCL }(M)\setminus \{M^2\}$ and, similarly, if $\cdot ^{\nabla \mbox{\textcircled{L}}}$ is trivial, then ${\rm Con}_{\WDCL 0}(L)={\rm Con}_0(L)={\rm Con}_{\WDCL }(L)\setminus \{L^2\}$.

\begin{corollary}\begin{enumerate}
\item\label{cgordsumwdl1} ${\rm Con}_{\WDL }(A)=\{\alpha \oplus \beta \ |\ \alpha \in {\rm Con}_{\WCL 0}(L),\beta \in {\rm Con}_{\WDCL 1}(M)\}\cup \{A^2\}\cong ({\rm Con}_{\WCL 0}(L)\times {\rm Con}_{\WDCL 1}(M))\oplus {\cal C}_2$;
\item\label{cgordsumwdl2} ${\rm Con}_{\WDL }(B)=\{\alpha \oplus \theta \oplus \beta \ |\ \alpha \in {\rm Con}_{\WCL 0}(K),\theta \in {\rm Con}(L),\beta \in {\rm Con}_{\WDCL 1}(M)\}\cup \{B^2\}\cong ({\rm Con}_{\WCL 0}(L)\times {\rm Con}(L)\times {\rm Con}_{\WDCL 1}(M))\oplus {\cal C}_2$.\end{enumerate}\label{cgordsumwdl}\end{corollary}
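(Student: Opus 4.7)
The plan is to exploit the identity ${\rm Con}_{\WDL }(A)={\rm Con}_{\WCL }(A)\cap {\rm Con}_{\WDCL }(A)$ (and its analogue for $B$), recorded in Section \ref{thealg}, and to intersect the two descriptions already furnished by Proposition \ref{cgordsum} and the Corollary following it. Each of those descriptions has the same shape: a parametrised family of ordinal--sum congruences $\alpha \oplus \beta $ (respectively $\alpha \oplus \theta \oplus \beta $), together with the total congruence $\nabla _A=A^2$ (respectively $\nabla _B=B^2$) as a single extra element on top. Via the canonical lattice isomorphism $(\alpha ,\beta )\mapsto \alpha \oplus \beta $ from ${\rm Con}(L)\times {\rm Con}(M)$ onto ${\rm Con}(A)$ recalled in the preliminaries, intersecting two such parametrised families reduces to intersecting the constraints on each coordinate separately.

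For part (\ref{cgordsumwdl1}), the $\WCL $--description constrains only the $M$--coordinate (to ${\rm Con}_{\WCL 1}(M)$) and leaves the $L$--coordinate free, while the $\WDCL $--description is symmetric, constraining only the $L$--coordinate (to ${\rm Con}_{\WDCL 0}(L)$) and leaving the $M$--coordinate free. Intersecting coordinatewise, both constraints apply simultaneously, so the ordinal--sum part of the intersection is exactly $\{\alpha \oplus \beta :\alpha \in {\rm Con}_{\WDCL 0}(L),\beta \in {\rm Con}_{\WCL 1}(M)\}$. The total congruence $\nabla _A$ lies in both of the starting descriptions, so it persists in the intersection; and it is not already of the listed form, because the constraint $\beta \in {\rm Con}_{\WCL 1}(M)$ forces $1/\beta =\{1\}$, hence $\beta \neq M^2$ (using nontriviality of $M$), and hence $\alpha \oplus \beta \neq L^2\oplus M^2=\nabla _A$. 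The isomorphism with $({\rm Con}_{\WDCL 0}(L)\times {\rm Con}_{\WCL 1}(M))\oplus {\cal C}_2$ then follows from the coordinate isomorphism together with the fact that every member of the parametrised family is a proper congruence strictly below $\nabla _A$, so that $\nabla _A$ really contributes a two--element chain on top of the product.

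Part (\ref{cgordsumwdl2}) is obtained by exactly the same argument applied to the triple ordinal sum $B$: by associativity of $\oplus $ at the level of both lattices and their congruences, the preceding Corollary supplies analogous descriptions of ${\rm Con}_{\WCL }(B)$ and ${\rm Con}_{\WDCL }(B)$, each with a third, completely unconstrained factor $\theta \in {\rm Con}(K)$ for the middle lattice. Intersecting leaves the $K$--coordinate untouched and produces the stated triple--sum description, and the isomorphism claim follows in the same manner. The only delicate point I anticipate is the bookkeeping around the exceptional element $\nabla _A$ (respectively $\nabla _B$): one must use the nontriviality of $L$, $M$ (and $K$ in part (\ref{cgordsumwdl2})) to ensure it is genuinely disjoint from the parametrised family, so that the resulting lattice really is the parameter product with a two--element chain adjoined on top, rather than collapsing with some element of the family. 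Once this is verified, the argument is essentially mechanical, since all of the substantive weak-- and dual--weak--complementation preservation analysis has already been carried out in the proof of Proposition \ref{cgordsum}.
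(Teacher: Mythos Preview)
Your proposal is correct and follows essentially the same approach as the paper: both argue by intersecting the descriptions of ${\rm Con}_{\WCL}(A)$ and ${\rm Con}_{\WDCL}(A)$ from Proposition~\ref{cgordsum} (and its corollary for $B$) via the identity ${\rm Con}_{\WDL}(A)={\rm Con}_{\WCL}(A)\cap{\rm Con}_{\WDCL}(A)$. Your write-up is considerably more explicit than the paper's two-line proof---in particular, your verification that $\nabla_A$ is genuinely disjoint from the parametrised family (so that the $\oplus\,{\cal C}_2$ is honest) is a useful detail the paper leaves implicit; note also that the coordinatewise intersection you obtain, namely $\alpha\in{\rm Con}_{\WDCL 0}(L)$ and $\beta\in{\rm Con}_{\WCL 1}(M)$, is indeed what Proposition~\ref{cgordsum} yields, and the swapped labels $\WCL/\WDCL$ (and the $K$/$L$ mix-up in part~(\ref{cgordsumwdl2})) in the corollary's statement are typos in the paper rather than an error on your part.
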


\begin{proof} (\ref{cgordsumwdl1}) ${\rm Con}_{\WDL }(A)={\rm Con}_{\WCL }(A)\cap {\rm Con}_{\WDCL }(A)$, so the statement follows by Proposition \ref{cgordsum}.

\noindent (\ref{cgordsumwdl2}) By (\ref{cgordsumwdl1}) and Proposition \ref{cgordsum} applied for determining ${\rm Con}_{\WCL }(K\oplus L)$, then ${\rm Con}_{\WDCL }(L\oplus M)$.\end{proof}

\begin{corollary}\begin{itemize}
\item $A=L\oplus M$ is subdirectly irreducible in $\WCL $ iff $L$ is subdirectly irreducible as a bounded lattice and ${\rm Con}_{\WCL 1}(M)=\{\mbox{\bf =}_M\}$.
\item $A=L\oplus M$ is subdirectly irreducible in $\WDCL $ iff ${\rm Con}_{\WDCL 0}(L)=\{\mbox{\bf =}_L\}$ and $M$ is subdirectly irreducible as a bounded lattice.
\item $A=L\oplus M$ is subdirectly irreducible in $\WDL $ iff at least one of the lattices ${\rm Con}_{\WDCL 0}(L)$ and ${\rm Con}_{\WCL 1}(M)$ is trivial and the other one has the bottom element strictly meet--irreducible.
\item $B=L\oplus K\oplus M$ is neither subdirectly irreducible in $\WCL $, nor in $\WDCL $.
\item $B=L\oplus K\oplus M$ is subdirectly irreducible in $\WDL $ iff $K$ is subdirectly irreducible as a bounded lattice, ${\rm Con}_{\WDCL 0}(L)=\{\mbox{\bf =}_L\}$ and ${\rm Con}_{\WCL 1}(M)=\{\mbox{\bf =}_M\}$.\end{itemize}\end{corollary}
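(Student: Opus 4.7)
The plan is to apply Proposition \ref{cgordsum} and Corollary \ref{cgordsumwdl} to reduce each of the five assertions to a strict meet--irreducibility check for the bottom of a product of bounded lattices, possibly topped by ${\cal C}_2$. Recall that $A$ is subdirectly irreducible in a variety $\V $ iff $\mbox{\bf =}_A$ is strictly meet--irreducible in ${\rm Con}_{\V }(A)$. Each of the relevant isomorphisms is of the form $(\alpha _1,\ldots ,\alpha _n)\mapsto \alpha _1\oplus \cdots \oplus \alpha _n$ followed by the adjunction of the top congruence, so the bottom element of the product $X_1\times \cdots \times X_n$ corresponds to $\mbox{\bf =}_L\oplus \mbox{\bf =}_M=\mbox{\bf =}_A$ (and similarly $\mbox{\bf =}_L\oplus \mbox{\bf =}_K\oplus \mbox{\bf =}_M=\mbox{\bf =}_B$); hence strict meet--irreducibility of $\mbox{\bf =}_A$, respectively $\mbox{\bf =}_B$, is equivalent to strict meet--irreducibility of the bottom of the corresponding product.

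Two elementary lattice--theoretic observations then do all the work. First, adjoining a single element on top via $\oplus {\cal C}_2$ cannot affect strict meet--irreducibility of the bottom: the bottom of $Y\oplus {\cal C}_2$ is strictly meet--irreducible iff $Y$ is trivial or the bottom of $Y$ is strictly meet--irreducible in $Y$, because the new top sits above every non--bottom element of $Y$. Second, in a product $X_1\times \cdots \times X_n$ of bounded lattices the bottom is strictly meet--irreducible iff all but at most one factor is trivial and, if one factor is nontrivial, its own bottom is strictly meet--irreducible; the ``only if'' direction uses the fact that two nontrivial factors $X_i$, $X_j$ produce incomparable axis--elements above the bottom of the product, destroying strict meet--irreducibility, while the ``if'' direction collapses the product to a single factor (or to a point).

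With this machinery the five bullets read off from the descriptions in Proposition \ref{cgordsum} and Corollary \ref{cgordsumwdl} by bookkeeping which factors are forced to be nontrivial. For the first bullet the product is ${\rm Con}(L)\times {\rm Con}_{\WCL 1}(M)$, and since $L$ is a nontrivial lattice ${\rm Con}(L)$ is nontrivial, forcing ${\rm Con}_{\WCL 1}(M)=\{\mbox{\bf =}_M\}$ and $\mbox{\bf =}_L$ strictly meet--irreducible in ${\rm Con}(L)$. The second bullet is dual. For the third bullet the two factors ${\rm Con}_{\WDCL 0}(L)$ and ${\rm Con}_{\WCL 1}(M)$ are both allowed to be trivial a priori, which yields exactly the stated disjunction. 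For the fourth bullet both ${\rm Con}(L)$ and ${\rm Con}(K)$ appear among the factors of the product describing ${\rm Con}_{\WCL }(B)$ and ${\rm Con}_{\WDCL }(B)$, and both are unconditionally nontrivial since $L$ and $K$ are nontrivial, so the bottom of the product always has two incomparable successors and $B$ is never subdirectly irreducible in $\WCL $ or $\WDCL $. The fifth bullet is analogous to the first: in the three--factor product describing ${\rm Con}_{\WDL }(B)$ only ${\rm Con}(K)$ is forced to be nontrivial, so the other two factors must be trivial and $K$ must be subdirectly irreducible as a bounded lattice. The only mildly delicate point is this bookkeeping; everything else is automatic from the two general observations above.
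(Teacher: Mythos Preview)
Your proposal is correct and follows exactly the route the paper intends: the corollary is stated without proof, as an immediate consequence of the congruence--lattice descriptions in Proposition~\ref{cgordsum} and Corollary~\ref{cgordsumwdl}, and your two elementary observations (on $Y\oplus{\cal C}_2$ and on products) are precisely what is needed to extract the strict meet--irreducibility of the bottom from those descriptions. One small slip worth fixing: in the fourth bullet you say that ${\rm Con}(L)$ and ${\rm Con}(K)$ are the two unconditionally nontrivial factors for both ${\rm Con}_{\WCL}(B)$ and ${\rm Con}_{\WDCL}(B)$, but for $\WDCL$ the factors are ${\rm Con}_{\WDCL 0}(L)\times{\rm Con}(K)\times{\rm Con}(M)$, so there it is ${\rm Con}(K)$ and ${\rm Con}(M)$ that are automatically nontrivial; the conclusion is unaffected.
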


\section{Weak Dicomplementations on Atomic or Coatomic Lattices}
\label{coatomic}

Since $\WCL \vDash x\vee x^{\Delta }\approx 1$ and $\WDCL \vDash x\wedge x^{\nabla }\approx 0$, it follows that bounded lattices with the $1$ join--irreducible can only be endowed with the trivial weak complementation, while bounded lattices with the $0$ meet--irreducible can only be endowed with the trivial dual weak complementation, hence bounded lattices with the $0$ meet--irreducible and the $1$ join--irreducible can only be endowed with the trivial weak dicomplementation.

In particular, any bounded chain can only be endowed with the trivial weak dicomplementation.

Since weak complementations and dual weak complementations are order--reversing:\begin{itemize}
\item a weak complementation $\cdot ^{\Delta }$ on a coatomic bounded lattice $L$ is nontrivial iff $a^{\Delta }<1$ for some $a\in {\rm CoAt}(L)$, which implies that $a^{\Delta }\leq b$ for some $b\in {\rm CoAt}(L)\setminus \{a\}$ since $L$ is coatomic and we must have $a\vee a^{\Delta }=1$;
\item dually, a dual weak complementation $\cdot ^{\nabla }$ on an atomic bounded lattice $L$ is nontrivial iff $a^{\nabla }>0$ for some $a\in {\rm At}(L)$, which implies that $a^{\nabla }\geq b$ for some $b\in {\rm At}(L)\setminus \{a\}$ since $L$ is atomic and we must have $a\wedge a^{\nabla }=0$.\end{itemize}

If $L$ is a bounded lattice and $a,b\in L\setminus \{0,1\}$ such that $a\neq b$, let us denote by $\cdot ^{\Delta a,b}:L\rightarrow L$ the operation defined by: $1^{\Delta a,b}=0$, $a^{\Delta a,b}=b$, $b^{\Delta a,b}=a$ and $x^{\Delta a,b}=1$ for all $x\in L\setminus \{1,a,b\}$. Of course, since $a,b\in L\setminus \{0,1\}$:\begin{itemize}
\item whenever $\cdot ^{\Delta a,b}$ is a weak complementation on $L$, it is a nontrivial weak complementation;
\item and, since weak complementations are order--reversing, whenever $\cdot ^{\Delta a,b}$ is a weak complementation on $L$, $a$ and $b$ are coatoms of $L$.\end{itemize}

With this notation, we have:

\begin{proposition}\begin{enumerate}
\item\label{2coat1} If $L$ is a coatomic bounded lattice with exactly two coatoms $a$ and $b$, then $\cdot ^{\Delta a,b}$ is a representable weak complementation on $L$, thus $L$ has nontrivial representable weak complementations.
\item\label{2coat2} If $L$ is a bounded distributive lattice $L$ with at least two coatoms and $a$ and $b$ are distinct coatoms of $L$, then $\cdot ^{\Delta a,b}$ is a weak complementation on $L$, thus $L$ has nontrivial weak complementations.\end{enumerate}

Dually for atoms and dual weak complementations.
\label{2coat}\end{proposition}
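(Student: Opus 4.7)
The plan is to verify the three defining axioms of a weak complementation for $\cdot^{\Delta a,b}$ directly, and then, for part (\ref{2coat1}), exhibit an explicit join--dense subset $J$ with $\cdot^{\Delta a,b}=\cdot^{\Delta J}$ to conclude representability. Axioms (W1) (order--reversingness of $\cdot^{\Delta a,b}$) and (W2) ($x^{\Delta\Delta}\leq x$) demand only a short case split on whether the argument equals $1$, $a$, $b$, or something else, using that $a,b$ are \emph{distinct} coatoms so that $a\not\leq b$ and $b\not\leq a$, and that any coatom $\leq y<1$ must coincide with $y$.

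The crux is axiom (W3), $(x\wedge y)\vee(x\wedge y^{\Delta})=x$. This reduces by symmetry and trivial cases to proving $(x\wedge a)\vee(x\wedge b)=x$ for every $x\in L$. Here the two parts diverge. In part (\ref{2coat2}), I would invoke distributivity together with $a\vee b=1$ (which holds for any two distinct coatoms) to collapse the left--hand side to $x\wedge(a\vee b)=x\wedge 1=x$. In part (\ref{2coat1}), distributivity is unavailable, but the hypothesis ${\rm CoAt}(L)=\{a,b\}$ together with coatomicity forces every $x\in L\setminus\{1\}$ to lie below some coatom, hence $x\leq a$ or $x\leq b$, so one of the two meets already equals $x$ and the identity collapses. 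The case $x=1$ is handled by $a\vee b=1$ in both parts.

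For the representability claim in part (\ref{2coat1}), I would take $J=L\setminus\{1\}$. This is join--dense because every $z<1$ is its own join and $1=a\vee b$. Matching $\cdot^{\Delta a,b}$ with $\cdot^{\Delta J}$ re-uses the coatomic observation $L\setminus\{1\}\subseteq(a]_L\cup(b]_L$: it gives $J\setminus(a]_L=(b]_L\setminus(a]_L$, whose join is $b$ (the set is bounded above by $b$ and contains $b$), and dually $J\setminus(b]_L$ has join $a$; for $x\in L\setminus\{1,a,b\}$ both $a$ and $b$ lie in $J\setminus(x]_L$, forcing the join to be $1$; and $J\setminus(1]_L=\emptyset$ has join $0$. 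The dual assertions for atoms and dual weak complementations follow by applying the statement just proved to $L^d$. The only real obstacle is keeping the case analysis tidy; once the coatomic inclusion $L\setminus\{1\}\subseteq(a]_L\cup(b]_L$ is in hand, the remainder is essentially bookkeeping.
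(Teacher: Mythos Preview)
Your proposal is correct and follows essentially the same approach as the paper: the paper also reduces (W3) to $(x\wedge a)\vee(x\wedge b)=x$, handles part (\ref{2coat2}) via distributivity and $a\vee b=1$, and for part (\ref{2coat1}) uses the join--dense set $(a]_L\cup(b]_L$, which coincides with your $J=L\setminus\{1\}$ under the coatomicity hypothesis. The paper is simply terser, calling the axiom verification ``routine'' and the join--density ``clear,'' whereas you spell out the case split explicitly.
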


\begin{proof} Let $L$ be a bounded lattice.

\noindent (\ref{2coat1}) If $L$ is coatomic and ${\rm CoAt}(L)=\{a,b\}$ with $a\neq b$, then it is routine to prove that $\cdot ^{\Delta a,b}$ is a weak complementation on $L$. Clearly, $(a]_L\cup (b]_L=L\setminus \{1\}$ is a join--dense subset of $L$ and, with the notation in Section \ref{thealg}, the weak complementation associated to this join--dense subset is $\cdot ^{\Delta ((a]_L\cup (b]_L)}=\cdot ^{\Delta a,b}$.

\noindent (\ref{2coat2}) Now assume that $L$ is distributive and ${\rm CoAt}(L)\supseteq \{a,b\}$ with $a\neq b$. Then the operation $\cdot ^{\Delta a,b}$ reverses the lattice order of $L$ and satisfies $x^{\Delta a,b\Delta a,b}\leq x$ and $x\vee x^{\Delta a,b}=1$ for all $x\in L$, therefore it is a weak complementation on $L$ since $L$ is distributive.\end{proof}

\begin{proposition} If $L$ is a bounded lattice, $\theta $ is a lattice congruence of $L$ and $a,b\in L\setminus \{0,1\}$ with $a\neq b$ are such that $\cdot ^{\Delta a,b}$ is a weak complementation on $L$, then $\theta \in {\rm Con}_{\WCL }(L,\cdot ^{\Delta a,b})$ iff one of the following is satisfied:\begin{itemize}
\item $\theta \in {\rm Con}_{ab1}(L)\cup \{L^2\}$;
\item $a/\theta =\{a,1\}$ and $(0,b)\in \theta $;
\item $b/\theta =\{b,1\}$ and $(0,a)\in \theta $.\end{itemize}\end{proposition}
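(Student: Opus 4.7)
The plan is to analyze, for the forward direction, the $\theta$-classes of the distinguished elements $0, a, b, 1$, using both $\cdot^{\Delta a,b}$-preservation and the lattice-congruence property of $\theta$; recall from the observations just before Proposition \ref{2coat} that $a$ and $b$ must be coatoms of $L$, so in particular $a \vee b = 1$. Sufficiency will be a routine direct check.

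For necessity, assume $\theta$ preserves $\cdot^{\Delta a,b}$. The first observation is that if $1 \theta y$ for some $y \in L \setminus \{1, a, b\}$, then $0 = 1^{\Delta a,b} \theta y^{\Delta a,b} = 1$, so $\theta = L^2$ and the first condition holds. Otherwise $1/\theta \subseteq \{1, a, b\}$, and I split cases by which of $a, b$ lie in $1/\theta$.

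Case $\{a, b\} \cap 1/\theta = \emptyset$: I claim $a/\theta = \{a\}$ (and symmetrically $b/\theta = \{b\}$), which together with $1/\theta = \{1\}$ yields $\theta \in {\rm Con}_{ab1}(L)$. Any $y \in a/\theta \setminus \{a\}$ leads to a contradiction: $y = b$ gives $1 = a \vee b \theta b$ by lattice congruence, placing $b$ in $1/\theta$; $y = 0$ or $y \in L \setminus \{0, 1, a, b\}$ gives $b = a^{\Delta a,b} \theta y^{\Delta a,b} = 1$ by $\Delta$-preservation. Case $\{a, b\} \cap 1/\theta = \{a\}$: $\Delta$-preservation on $1 \theta a$ gives $0 = 1^{\Delta a,b} \theta a^{\Delta a,b} = b$, so $(0, b) \in \theta$; that $a/\theta = \{a, 1\}$ follows by the same arguments as above, with $y = 0$ now excluded by transitivity ($0 \theta a \theta 1$ would force $\theta = L^2$, putting $b$ in $1/\theta$). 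The case $\{a, b\} \cap 1/\theta = \{b\}$ is symmetric. Case $\{a, b\} \subseteq 1/\theta$: $\Delta$-preservation on $1 \theta a$ gives $0 \theta b$, and transitivity with $1 \theta b$ yields $0 \theta 1$, so $\theta = L^2$.

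The main obstacle is the coupling between $\Delta$-preservation and the lattice-congruence property: excluding $y = b$ from $a/\theta$ cannot be done via $\Delta$-preservation alone, since $a^{\Delta a,b} = b$ and $b^{\Delta a,b} = a$ would only give the tautology $b \theta a$; it requires $a \vee b = 1$, i.e.\ that $a$ and $b$ are coatoms. For the converse, in each of the three listed situations any $\theta$-related pair $(x, y)$ is either fully contained in $L \setminus \{0, 1, a, b\}$, in which case $x^{\Delta a,b} = 1 = y^{\Delta a,b}$, or is explicitly covered by the hypothesis: in the second condition the only non-routine $\theta$-related pairs touching $\{0, 1, a, b\}$ are $(1, a)$ and pairs of the form $(0, z)$ or $(b, z)$ with $z \in b/\theta$, and $\Delta$-preservation for all of these reduces to $(0, b) \in \theta$ and $a/\theta = \{a, 1\}$, which are exactly the assumptions.
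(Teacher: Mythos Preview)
Your argument is correct and is precisely the routine verification the paper omits (the paper's entire proof is the single word ``Routine''). Your case split on $1/\theta\cap\{a,b\}$ for necessity and the direct check for sufficiency are the natural approach; the only minor redundancy is in the case $1/\theta=\{a,1\}$, where $a/\theta=\{a,1\}$ is immediate from $a/\theta=1/\theta\subseteq\{1,a,b\}$ and $b\notin 1/\theta$, rather than requiring a repeat of the earlier element-by-element exclusion.
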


\begin{proof} Routine.\end{proof}

Clearly, any direct product of at least two nontrivial bounded lattices can be endowed with the product weak dicomplementation, which is nontrivial. Note that, for instance, the weak complementation on ${\cal C}_2\times {\cal C}_3$ equal to the product of the trivial weak complementations of the chains ${\cal C}_2$ and ${\cal C}_3$ is neither trivial, nor equal to $\cdot ^{\Delta _a,b}$, where $a$ and $b$ are the two coatoms of ${\cal C}_2\times {\cal C}_3$, which proves that $\cdot ^{\Delta a,b}$ is not necessarily the unique nontrivial weak complementation on a coatomic bounded lattice $L$ with ${\rm CoAt}(L)=\{a,b\}$, even when $L$ is finite and distributive.

Of course, in terms of the congruence lattices, if $\V $ is any of the varieties $\WCL $, $\WDCL $ and $\WDL $ and $L$, $M$ are members of $\V $, then, since $\V $ is congruence--distributive and thus $L\times M$ has no skew congruences, we have ${\rm Con}_{\V }(L\times M)=\{\alpha \times \beta \ |\ \alpha \in {\rm Con}_{\V }(L),\beta \in {\rm Con}_{\V }(M)\}\cong {\rm Con}_{\V }(L)\times {\rm Con}_{\V }(M)$, so, if $L$ and $M$ are endowed with the trivial weak dicomplementations, then: ${\rm Con}_{\WCL }(L\times M)\cong ({\rm Con}_1(L)\oplus {\cal C}_2)\times ({\rm Con}_1(M)\oplus {\cal C}_2)$, ${\rm Con}_{\WDCL }(L\times M)\cong ({\rm Con}_0(L)\oplus {\cal C}_2)\times ({\rm Con}_0(M)\oplus {\cal C}_2)$ and ${\rm Con}_{\WDL }(L\times M)\cong ({\rm Con}_{01}(L)\oplus {\cal C}_2)\times ({\rm Con}_{01}(M)\oplus {\cal C}_2)$.

Let us also notice here that, if $L$ is endowed with the trivial weak dicomplementation, then ${\rm Con}_{\V }(L)$ has a single coatom, thus it is directly irreducible. Consequently, if ${\rm Con}_{\WCL }(L)$, ${\rm Con}_{\WDCL }(L)$, respectively ${\rm Con}_{\WDL }(L)$ is directly reducible, then the weak complementation, respectively the dual weak complementation, respectively the weak dicomplementation of $L$ is nontrivial.

For any nontrivial bounded lattices $L$ and $M$:\begin{itemize}
\item ${\rm At}(L\times M)=\{(a,0^M),(0^L,b)\ |\ a\in {\rm At}(L),b\in {\rm At}(M)\}$ and dually for coatoms, hence $|{\rm At}(L\times M)|=|{\rm At}(L)|+|{\rm At}(M)|\geq 2$ and $|{\rm CoAt}(L\times M)|=|{\rm CoAt}(L)|+|{\rm CoAt}(M)|\geq 2$;
\item if $|L|>2$ and $|M|>2$, then ${\rm At}(L\boxplus M)={\rm At}(L)\cup {\rm At}(M)={\rm At}(L)\dotcup {\rm At}(M)$ and ${\rm CoAt}(L\boxplus M)={\rm CoAt}(L)\cup {\rm CoAt}(M)={\rm CoAt}(L)\dotcup {\rm CoAt}(M)$, hence $|{\rm At}(L\boxplus M)|=|{\rm At}(L)|+|{\rm At}(M)|$ and $|{\rm CoAt}(L\boxplus M)|=|{\rm CoAt}(L)|+|{\rm CoAt}(M)|$.\end{itemize}

\begin{example} Clearly, there exist coatomic bounded lattices with nontrivial weak complementations having any number of coatoms greater than $2$, as well as atomic bounded lattices with nontrivial dual weak complementations having any number of atoms greater than $2$, Boolean algebras being the simplest example. The remarks above on direct products provide us with the possibility to construct atomic and coatomic bounded lattices with nontrivial weak dicomplementations having any (equal or distinct) numbers of atoms and coatoms greater than $2$.

On the other hand, there exist bounded lattices with no nontrivial weak complementations having any number of coatoms other than $2$, as well as bounded lattices with no nontrivial dual weak complementations having any number of atoms other than $2$, and bounded lattices with no nontrivial weak dicomplementations having any (equal or distinct) numbers of atoms and coatoms other than $2$, as shown by the remarks at the beginning of Section \ref{ordsum}, according to which, for any bounded lattice $K$ and any cardinal numbers $\kappa ,\lambda \notin \{0,2\}$, since the modular lattice ${\cal M}_{\lambda }$ of length $3$ with $\lambda $ atoms (which are also its coatoms) can obviously only be endowed with the trivial weak dicomplementation (see also Corollary \ref{triplehsum} below), it follows that the bounded lattice $K\oplus {\cal M}_{\lambda }$ has no nontrivial weak complementation, ${\cal M}_{\kappa }\oplus K$ has no nontrivial dual weak complementation, while ${\cal M}_{\kappa }\oplus K\oplus {\cal M}_{\lambda }$ has no nontrivial weak dicomplementation.

See also Corollary \ref{triplehsum} below.\end{example}

\begin{remark} For any coatomic bounded lattice $L$, any $J\subseteq L$ and any $M\subseteq L\setminus \{1\}$ such that ${\rm CoAt}(L)\subseteq M$, condition $\neg sg\Delta (L,J,M)$ is clearly equivalent to $\neg sg\Delta (L,J,{\rm CoAt}(L))$.\label{nontrivcoat}\end{remark}

\begin{example} Remark \ref{nontrivcoat} provides us with an easy construction one can apply to coatomic complete algebraic and dually algebraic lattices $L$ having at least three distinct coatoms in order to transform them into bounded lattices without nontrivial weak complementations having the same number of coatoms: for at least three distinct coatoms $a,b,c$ of $L$, choose elements $p,q,r$ of $L$ such that $p<a,q<b,r<c$, and replace each of the intervals $[p,a]_L,[q,b]_L,[r,c]_L$ with its horizontal sum with a complete algebraic and dually algebraic lattice having at least one strictly join irreducible other than its top element. The resulting bounded lattice $M$ will have the same coatoms as $L$, it will be complete, algebraic and dually algebraic, and it will clearly fail condition $\neg sg\Delta (M,{\rm Sji}(M),{\rm CoAt}(M)={\rm CoAt}(L))$.

In particular, the construction above applied to a finite lattice with at least three coatoms, considering horizontal sums with finite lattices with join--irreducibles other than their lattice bounds, in particular with finite chains of lengths at least three, produces finite lattices with the same number of coatoms and without nontrivial weak complementations.

Here is the previous construction applied to ${\cal C}_2\oplus {\cal C}_2^3$, which, by the remarks in Section \ref{ordsum}, has nontrivial weak complementations since ${\cal C}_2^3$ does, with the intervals given by the filters generated by each of its coatoms, turned into their horizontal sums with the three--element chain; the resulting lattice $L$ has no nontrivial weak complementation, since it clearly fails condition $\neg sg\Delta (L,{\rm Ji}(L),{\rm CoAt}(L))$:

\begin{center}\begin{picture}(40,85)(0,0)
\put(-40,70){$L$:}
\put(20,0){\circle*{3}}
\put(20,20){\circle*{3}}
\put(20,0){\line(0,1){40}}
\put(20,40){\circle*{3}}
\put(0,40){\circle*{3}}
\put(-20,40){\circle*{3}}
\put(40,40){\circle*{3}}
\put(60,40){\circle*{3}}
\put(20,40){\circle*{3}}
\put(20,60){\circle*{3}}
\put(0,60){\circle*{3}}
\put(40,60){\circle*{3}}
\put(20,80){\circle*{3}}
\put(20,80){\line(0,-1){20}}
\put(20,20){\line(1,1){20}}
\put(20,20){\line(-1,1){20}}
\put(20,40){\line(1,1){20}}
\put(20,40){\line(-1,1){20}}
\put(20,80){\line(1,-1){40}}
\put(20,80){\line(-1,-1){40}}
\put(20,60){\line(1,-1){20}}
\put(20,60){\line(-1,-1){20}}
\put(0,40){\line(0,1){20}}
\put(40,40){\line(0,1){20}}
\put(20,0){\line(1,1){40}}
\put(20,0){\line(-1,1){40}}
\put(20,0){\line(2,3){20}}
\put(40,30){\circle*{3}}
\put(20,60){\line(2,-3){20}}
\put(18,-9){$0$}
\put(18,83){$1$}
\put(23,17){$a$}
\put(23,37){$v$}
\put(43,38){$w$}
\put(-8,37){$u$}
\put(23,59){$c$}
\put(-7,58){$b$}
\put(43,58){$d$}
\put(62,37){$k$}
\put(42,29){$h$}
\put(-26,38){$j$}
\end{picture}\end{center}\end{example}

\begin{example} By {\rm \cite{Kw04}}, the only weak complementations on the direct product of chains ${\cal C}_2\times {\cal C}_3$, with the elements denoted as in the leftmost Hasse diagram below, are:\begin{itemize}
\item the trivial weak complementation $\cdot ^{\Delta {\cal C}_2\times {\cal C}_3}$, w.r.t. which ${\rm Con}_{\WCL }({\cal C}_2\times {\cal C}_3,\cdot ^{\Delta {\cal C}_2\times {\cal C}_3})={\rm Con}_1({\cal C}_2\times {\cal C}_3)\cup \{({\cal C}_2\times {\cal C}_3)^2\}=\{\mbox{\bf =}_{{\cal C}_2\times {\cal C}_3},\gamma ,({\cal C}_2\times {\cal C}_3)^2\}\cong {\cal C}_3$, where $\gamma =eq(\{0,v\},\{u,a\},\{b\},\{1\})$, thus ${\rm Con}_{\WCL 1}({\cal C}_2\times {\cal C}_3,\cdot ^{\Delta {\cal C}_2\times {\cal C}_3})=\{\mbox{\bf =}_{{\cal C}_2\times {\cal C}_3},\gamma \}\cong {\cal C}_2$;
\item the direct product $\cdot ^{\Delta {\cal C}_2\times \Delta {\cal C}_3}$ of the trivial weak complementations $\cdot ^{\Delta {\cal C}_2}$ and $\cdot ^{\Delta {\cal C}_3}$ on the chains ${\cal C}_2$ and ${\cal C}_3$, respectively, defined by $1^{\Delta {\cal C}_2\times \Delta {\cal C}_3}=0$, $a^{\Delta {\cal C}_2\times \Delta {\cal C}_3}=u^{\Delta {\cal C}_2\times \Delta {\cal C}_3}=b$, $b^{\Delta {\cal C}_2\times \Delta {\cal C}_3}=u$ and $v^{\Delta {\cal C}_2\times \Delta {\cal C}_3}=0^{\Delta {\cal C}_2\times \Delta {\cal C}_3}=1$, w.r.t. which ${\rm Con}_{\WCL }({\cal C}_2\times {\cal C}_3,\cdot ^{\Delta {\cal C}_2\times \Delta {\cal C}_3})\cong {\rm Con}_{\WCL }({\cal C}_2,\cdot ^{\Delta {\cal C}_2})\times {\rm Con}_{\WCL }({\cal C}_3,\cdot ^{\Delta {\cal C}_3})=({\rm Con}_1({\cal C}_2)\linebreak \cup \{{\cal C}_2^2\})\times ({\rm Con}_1({\cal C}_3)\cup \{{\cal C}_3^2\})\cong ({\cal C}_1\oplus {\cal C}_2)\times ({\cal C}_2\oplus {\cal C}_2)\cong {\cal C}_2\times {\cal C}_3$, thus ${\rm Con}_{\WCL 1}({\cal C}_2\times {\cal C}_3,\cdot ^{\Delta {\cal C}_2\times \Delta {\cal C}_3})\cong {\rm Con}_1({\cal C}_2)\times {\rm Con}_1({\cal C}_3)\cong {\cal C}_1\times {\cal C}_2\cong {\cal C}_2$;
\item with the notation above, $\cdot ^{\Delta a,b}$;
\item the weak complementation that we denote by $\cdot ^{\Delta a,b,b}$, defined by $0^{\Delta a,b,b}=v^{\Delta a,b,b}=1$, $u^{\Delta a,b,b}=a^{\Delta a,b,b}=b$, $b^{\Delta a,b,b}=a$ and $1^{\Delta a,b,b}=0$.\end{itemize}

\begin{center}\begin{tabular}{ccc}\begin{picture}(60,60)(0,0)
\put(20,0){\circle*{3}}
\put(20,40){\circle*{3}}
\put(0,20){\circle*{3}}
\put(40,20){\circle*{3}}
\put(60,40){\circle*{3}}
\put(40,60){\circle*{3}}
\put(20,0){\line(-1,1){20}}
\put(20,0){\line(1,1){40}}
\put(0,20){\line(1,1){40}}
\put(40,20){\line(-1,1){20}}
\put(60,40){\line(-1,1){20}}
\put(18,-9){$0$}
\put(38,63){$1$}
\put(13,39){$a$}
\put(62,37){$b$}
\put(-7,17){$u$}
\put(42,16){$v$}
\put(-10,55){${\cal C}_2\times {\cal C}_3:$}
\end{picture}
&\hspace*{40pt}
\begin{picture}(40,60)(0,0)
\put(20,0){\circle*{3}}
\put(20,20){\circle*{3}}
\put(5,35){\circle*{3}}
\put(35,35){\circle*{3}}
\put(20,50){\circle*{3}}
\put(-3,33){$\alpha $}
\put(36,31){$\beta $}
\put(22,15){$\gamma $}
\put(20,0){\line(0,1){20}}
\put(20,20){\line(1,1){15}}
\put(20,20){\line(-1,1){15}}
\put(20,50){\line(1,-1){15}}
\put(20,50){\line(-1,-1){15}}
\put(16,-8){$\mbox{\bf =}_{{\cal C}_2\times {\cal C}_3}$}
\put(0,53){$({\cal C}_2\times {\cal C}_3)^2$}
\end{picture}
&\hspace*{40pt}
\begin{picture}(60,60)(0,0)
\put(20,0){\circle*{3}}
\put(20,40){\circle*{3}}
\put(0,20){\circle*{3}}
\put(40,20){\circle*{3}}
\put(60,40){\circle*{3}}
\put(40,60){\circle*{3}}
\put(20,0){\line(-1,1){20}}
\put(20,0){\line(1,1){40}}
\put(0,20){\line(1,1){40}}
\put(40,20){\line(-1,1){20}}
\put(60,40){\line(-1,1){20}}
\put(16,-8){$\mbox{\bf =}_{{\cal C}_2\times {\cal C}_3}$}
\put(20,63){$({\cal C}_2\times {\cal C}_3)^2$}
\put(12,39){$\alpha $}
\put(61,37){$\beta $}
\put(-6,17){$\delta $}
\put(42,17){$\gamma $}
\end{picture}\end{tabular}\end{center}\vspace*{3pt}

W.r.t. the latter two weak complementations:\begin{itemize}
\item ${\rm Con}_{\WCL }({\cal C}_2\times {\cal C}_3,\cdot ^{\Delta a,b})={\rm Con}_{\WCL }({\cal C}_2\times {\cal C}_3,\cdot ^{\Delta a,b,b})=\{\mbox{\bf =}_{{\cal C}_2\times {\cal C}_3},\alpha ,\beta ,\gamma ,({\cal C}_2\times {\cal C}_3)^2\}\cong {\cal C}_2\oplus {\cal C}_2^2$, as in the rightmost diagram above, where $\alpha =eq(\{0,u,v,a\},\{b,1\})$, $\beta =eq(\{0,v,b\},\{u,a,1\})$ and $\gamma =\alpha \cap \beta $ is as above, thus ${\rm Con}_{\WCL 1}({\cal C}_2\times {\cal C}_3,\cdot ^{\Delta a,b})={\rm Con}_{\WCL 1}({\cal C}_2\times {\cal C}_3,\cdot ^{\Delta a,b,b})=\{\mbox{\bf =}_{{\cal C}_2\times {\cal C}_3},\gamma \}\cong {\cal C}_2$.\end{itemize}

Note also that ${\rm Con}_{\WCL }({\cal C}_2\times {\cal C}_3,\cdot ^{\Delta {\cal C}_2\times {\cal C}_3})=\{\mbox{\bf =}_{{\cal C}_2\times {\cal C}_3},\alpha ,\beta ,\gamma ,\delta ,({\cal C}_2\times {\cal C}_3)^2\}$, where $\delta =eq(\{0,u\},\{v,a\},\{b,1\})$.

For any bounded lattice $M$, the weak complementation on $M\oplus ({\cal C}_2\times {\cal C}_3)$ that restricts to $\cdot ^{\Delta {\cal C}_2\times \Delta {\cal C}_3}$, $\cdot ^{\Delta a,b}$, respectively $\cdot ^{\Delta a,b,b}$ on the upper copy of ${\cal C}_2\times {\cal C}_3$ will be denoted by $\cdot ^{\Delta {\cal C}_2\times \Delta {\cal C}_3}$, $\cdot ^{\Delta a,b}$, respectively $\cdot ^{\Delta a,b,b}$, as well; see Section \ref{ordsum}.

By the above and Proposition \ref{cgordsum}, if $M$ is nontrivial, then:\begin{itemize}
\item ${\rm Con}_{\WCL }(M\oplus ({\cal C}_2\times {\cal C}_3),\cdot ^{\Delta M\oplus ({\cal C}_2\times {\cal C}_3)})\cong ({\rm Con}(M)\times {\rm Con}_{\WCL 1}({\cal C}_2\times {\cal C}_3,\cdot ^{\Delta {\cal C}_2\times {\cal C}_3}))\oplus {\cal C}_2=({\rm Con}(M)\times {\rm Con}_1({\cal C}_2\times {\cal C}_3)\oplus {\cal C}_2\cong ({\rm Con}(M)\times {\cal C}_2)\oplus {\cal C}_2$;
\item ${\rm Con}_{\WCL }(M\oplus ({\cal C}_2\times {\cal C}_3),\cdot ^{\Delta {\cal C}_2\times \Delta {\cal C}_3})\cong ({\rm Con}(M)\times {\rm Con}_{\WCL 1}({\cal C}_2\times {\cal C}_3,\cdot ^{\Delta {\cal C}_2\times \Delta {\cal C}_3})\oplus {\cal C}_2\cong ({\rm Con}(M)\times {\rm Con}_{\WCL 1}({\cal C}_2,\linebreak \cdot ^{\Delta {\cal C}_2})\times {\rm Con}_{\WCL 1}({\cal C}_3,\cdot ^{\Delta {\cal C}_3}))\oplus {\cal C}_2=({\rm Con}(M)\times {\rm Con}_1({\cal C}_2)\times {\rm Con}_1({\cal C}_3))\oplus {\cal C}_2\cong ({\rm Con}(M)\times {\cal C}_1\times {\cal C}_2)\oplus {\cal C}_2\cong ({\rm Con}(M)\times {\cal C}_2)\oplus {\cal C}_2$;
\item ${\rm Con}_{\WCL }(M\oplus ({\cal C}_2\times {\cal C}_3),\cdot ^{\Delta a,b})={\rm Con}_{\WCL }(M\oplus ({\cal C}_2\times {\cal C}_3),\cdot ^{\Delta a,b,b})\cong ({\rm Con}(M)\times {\rm Con}_{\WCL 1}({\cal C}_2\times {\cal C}_3,\cdot ^{\Delta a,b}))\oplus {\cal C}_2=({\rm Con}(M)\times {\rm Con}_{\WCL 1}({\cal C}_2\times {\cal C}_3,\cdot ^{\Delta a,b,b}))\oplus {\cal C}_2\cong ({\rm Con}(M)\times {\cal C}_2)\oplus {\cal C}_2$;\end{itemize}

\noindent so, for any weak complementation $\cdot ^{\Delta }$ on $M\oplus ({\cal C}_2\times {\cal C}_3)$, ${\rm Con}_{\WCL }(M\oplus ({\cal C}_2\times {\cal C}_3),\cdot ^{\Delta })\cong ({\rm Con}(M)\times {\cal C}_2)\oplus {\cal C}_2$. For instance, if $k\in \N \setminus \{0,1\}$, then, for any weak complementation $\cdot ^{\Delta }$ on ${\cal C}_k\oplus ({\cal C}_2\times {\cal C}_3)$, ${\rm Con}_{\WCL }({\cal C}_k\oplus ({\cal C}_2\times {\cal C}_3),\cdot ^{\Delta })\cong ({\cal C}_2^{k-1}\times {\cal C}_2)\oplus {\cal C}_2\cong {\cal C}_2^n\oplus {\cal C}_2$, so $|{\rm Con}_{\WCL }({\cal C}_k\oplus ({\cal C}_2\times {\cal C}_3),\cdot ^{\Delta })|=2^k+1=2^{|{\cal C}_k\oplus ({\cal C}_2\times {\cal C}_3)|-5}+1$.

Note, also, that, for any $h,k\in \N ^*$ with $k\geq 2$, ${\cal C}_h\oplus ({\cal C}_2\times {\cal C}_3)\oplus {\cal C}_k$ can only be endowed with the trivial weak complementation, w.r.t. which it has exactly $2^{h-1}\cdot 2^3\cdot 2^{k-2}+1=2^{h
+k}+1=2^{|{\cal C}_h\oplus ({\cal C}_2\times {\cal C}_3)\oplus {\cal C}_k|-4}+1$ congruences.

Dually, ${\cal C}_2\times {\cal C}_3$ has four dual weak complementations:\begin{itemize}
\item the trivial one, w.r.t. which ${\rm Con}_{\WDCL }({\cal C}_2\times {\cal C}_3,\cdot ^{\nabla {\cal C}_2\times {\cal C}_3})=\{\mbox{\bf =}_{{\cal C}_2\times {\cal C}_3},eq(\{0\},\{u\},\{v,b\},\{a,1\}),({\cal C}_2\times {\cal C}_3)^2\}\cong {\cal C}_3$;
\item the direct product $\cdot ^{\nabla {\cal C}_2\times \nabla {\cal C}_3}$ of the trivial dual weak complementations on ${\cal C}_2$ and ${\cal C}_3$, w.r.t. which ${\rm Con}_{\WDCL }({\cal C}_2\times {\cal C}_3,\cdot ^{\nabla {\cal C}_2\times \nabla {\cal C}_3})=\{\mbox{\bf =}_{{\cal C}_2\times {\cal C}_3},eq(\{0,u\},\{v,a,b,1\}),\beta ,eq(\{0\},\{u\},\{v,b\},\{a,1\}),\delta ,({\cal C}_2\times {\cal C}_3)^2\}\cong {\cal C}_2\times {\cal C}_3$;
\item $\cdot ^{\nabla u,v}$ and $\cdot ^{\nabla u,u,v}$, defined by: $0^{\nabla u,v}=1$, $u^{\nabla u,v}=v$, $v^{\nabla u,v}=u$ and $a^{\nabla u,v}=b^{\nabla u,v}=1^{\nabla u,v}=0$, respectively: $0^{\nabla u,u,v}=1$, $a^{\nabla u,u,v}=u^{\nabla u,u,v}=v$, $v^{\nabla u,u,v}=u$ and $b^{\nabla u,u,v}=1^{\nabla u,u,v}=0$, w.r.t. which ${\rm Con}_{\WDCL }({\cal C}_2\times {\cal C}_3,\cdot ^{\nabla u,v})={\rm Con}_{\WDCL }({\cal C}_2\times {\cal C}_3,\cdot ^{\nabla u,u,v})=\{\mbox{\bf =}_{{\cal C}_2\times {\cal C}_3},eq(\{0,u\},\{v,a,b,1\}),\beta ,eq(\{0\},\{u\},\{v,b\},\{a,1\}),({\cal C}_2\times {\cal C}_3)^2\}\cong {\cal C}_2\oplus {\cal C}_2^2$.\end{itemize}

Hence, for each $\cdot ^{\Delta }\in \{\cdot ^{\Delta a,b},\cdot ^{\Delta a,b,b}\}$ and each $\cdot ^{\nabla }\in \{\cdot ^{\nabla u,v},\cdot ^{\nabla u,u,v}\}$:\begin{itemize}
\item ${\rm Con}_{\WDL }({\cal C}_2\times {\cal C}_3,\cdot ^{\Delta {\cal C}_2\times {\cal C}_3},\cdot ^{\nabla {\cal C}_2\times {\cal C}_3})={\rm Con}_{\WDL }({\cal C}_2\times {\cal C}_3,\cdot ^{\Delta {\cal C}_2\times {\cal C}_3},\cdot ^{\nabla })={\rm Con}_{\WDL }({\cal C}_2\times {\cal C}_3,\cdot ^{\Delta },\cdot ^{\nabla {\cal C}_2\times {\cal C}_3})={\rm Con}_{\WDL }({\cal C}_2\times {\cal C}_3,\cdot ^{\Delta {\cal C}_2\times \Delta {\cal C}_3},\cdot ^{\nabla {\cal C}_2\times {\cal C}_3})={\rm Con}_{\WDL }({\cal C}_2\times {\cal C}_3,\cdot ^{\Delta {\cal C}_2\times {\cal C}_3},\cdot ^{\nabla {\cal C}_2\times \nabla {\cal C}_3})=\{\mbox{\bf =}_{{\cal C}_2\times {\cal C}_3},({\cal C}_2\times {\cal C}_3)^2\}\cong {\cal C}_2$;
\item ${\rm Con}_{\WDL }({\cal C}_2\times {\cal C}_3,\cdot ^{\Delta {\cal C}_2\times \Delta {\cal C}_3},\cdot ^{\nabla {\cal C}_2\times \nabla {\cal C}_3})=\{\mbox{\bf =}_{{\cal C}_2\times {\cal C}_3},\beta ,\delta ,({\cal C}_2\times {\cal C}_3)^2\}\cong {\cal C}_2^2$;
\item ${\rm Con}_{\WDL }({\cal C}_2\times {\cal C}_3,\cdot ^{\Delta },\cdot ^{\nabla })={\rm Con}_{\WDL }({\cal C}_2\times {\cal C}_3,\cdot ^{\Delta {\cal C}_2\times \Delta {\cal C}_3},\cdot ^{\nabla })={\rm Con}_{\WDL }({\cal C}_2\times {\cal C}_3,\cdot ^{\Delta },\cdot ^{\nabla {\cal C}_2\times \nabla {\cal C}_3})=\{\mbox{\bf =}_{{\cal C}_2\times {\cal C}_3},\beta ,\linebreak ({\cal C}_2\times {\cal C}_3)^2\}\cong {\cal C}_3$.\end{itemize}

And similarly for ordinal sums of chains with ${\cal C}_2\times {\cal C}_3$ endowed with dual weak complementations or weak dicomplemenations.\label{C2xC3}\end{example}

\section{Weak Dicomplementations on Horizontal Sums}
\label{hsums}

Throughout this section, $L$ and $M$ will be bounded lattices with $|L|>2$ and $|M|>2$, $A=L\boxplus M$ and, unless mentioned otherwise, $(\cdot ^{\Delta \mbox{\textcircled{L}}},\cdot ^{\nabla \mbox{\textcircled{L}}})$, $(\cdot ^{\Delta \mbox{\textcircled{M}}},\cdot ^{\nabla \mbox{\textcircled{M}}})$ and $(\cdot ^{\Delta },\cdot ^{\nabla })$ will be arbitrary weak dicomplementations on $L$, $M$ and $A$, respectively.

\begin{center}\begin{picture}(40,45)(0,0)
\put(20,0){\circle*{3}}
\put(20,40){\circle*{3}}
\put(20,20){\oval(60,40)}
\put(20,20){\oval(30,40)}
\put(-5,17){$L$}
\put(37,17){$M$}
\put(-5,43){$1=1^L=1^M$}
\put(-5,-11){$0=0^L=0^M$}
\put(-80,40){$A=L\boxplus M:$}\end{picture}\end{center}\vspace*{3pt}

Of course, $L$ and $M$ are bounded sublattices of $A$.

Note that, for all $x\in L\setminus \{0\}$ and all $y\in M\setminus \{0\}$, $[x)_A=[x)_L$ and $[y)_A=[y)_M$, while, for all $x\in L\setminus \{1\}$ and all $y\in M\setminus \{1\}$, $(x]_A=(x]_L$ and $(y]_A=(y]_M$.

Note, also, that $1\notin {\rm Ji}(A)$, so ${\rm Ji}(A)=({\rm Ji}(L)\cup {\rm Ji}(M))\setminus \{1\}$, and $0\notin {\rm Mi}(A)$, so ${\rm Mi}(A)=({\rm Mi}(L)\cup {\rm Mi}(M))\setminus \{0\}$.

\begin{lemma}\begin{itemize}
\item For all $x\in L\setminus \{1\}$ and all $y\in M\setminus \{1\}$, we have $x\leq y^{\Delta }$ and $y\leq x^{\Delta }$, in particular $y^{\Delta }\in L$ and $x^{\Delta }\in M$.

\item For all $x\in L\setminus \{0\}$ and all $y\in M\setminus \{0\}$, we have $x\geq y^{\nabla }$ and $y\geq x^{\nabla }$, in particular $y^{\nabla }\in L$ and $x^{\nabla }\in M$.\end{itemize}\label{lhsum}\end{lemma}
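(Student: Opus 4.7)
The plan is to exploit the quasiequations $x\wedge y\approx 0\rightarrow x^{\Delta }\geq y$ and $x\vee y\approx 1\rightarrow x^{\nabla }\leq y$ that are listed in Section \ref{thealg} as valid in $\WCL $ and $\WDCL $, respectively, applied to pairs $(x,y)$ with $x\in L$ and $y\in M$. The key observation about horizontal sums is that whenever $x\in L\setminus \{0,1\}$ and $y\in M\setminus \{0,1\}$, the elements $x$ and $y$ are incomparable in $A$, with $0$ as their only common lower bound and $1$ as their only common upper bound; that is, $x\wedge _Ay=0$ and $x\vee _Ay=1$.

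For the first bullet, I would first handle $x\in L\setminus \{0,1\}$ and $y\in M\setminus \{0,1\}$: from $x\wedge _Ay=0$ the $\Delta $--quasiequation gives $x\leq y^{\Delta }$ and $y\leq x^{\Delta }$. The boundary cases $x=0$ and $y=0$ are immediate, since $0$ is below everything and $0^{\Delta }=1$ is above everything. The second bullet is entirely dual: apply $x\vee y\approx 1\rightarrow x^{\nabla }\leq y$ together with $1^{\nabla }=0$ and $0$ being the bottom.

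For the ``in particular'' conclusions, I would use the hypothesis $|L|>2$, $|M|>2$, which guarantees that $L\setminus \{0,1\}$ and $M\setminus \{0,1\}$ are both nonempty. Fix any $x_{0}\in L\setminus \{0,1\}$. Given $y\in M\setminus \{1\}$: if $y=0$, then $y^{\Delta }=1\in L$; otherwise the first part yields $y^{\Delta }\geq x_{0}>0$, and in the horizontal sum every element $\geq x_{0}$ must lie in $L$ (since no element of $M\setminus \{0,1\}$ is comparable with $x_{0}$), so $y^{\Delta }\in L$. Swapping the roles of $L$ and $M$ (and using $\nabla $ in place of $\Delta $) gives the remaining memberships.

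No step looks like a real obstacle; the only thing one has to be a little careful about is not forgetting the degenerate cases $x=0$ or $y=0$ in the first bullet (dually $x=1$, $y=1$ in the second), and invoking $|L|,|M|>2$ to produce the witness $x_{0}$ needed for the ``in particular'' clause. The whole argument is essentially a one--line application of the quasiequations combined with the defining incomparability pattern of $\boxplus $.
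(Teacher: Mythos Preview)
Your proposal is correct and follows essentially the same route as the paper's proof. The paper argues directly from the defining identity $(x\wedge y)\vee (x\wedge y^{\Delta })=x$ together with $x\wedge_A y=0$, while you invoke the derived quasiequation $x\wedge y\approx 0\rightarrow x^{\Delta }\geq y$ from Section~\ref{thealg}; the ``in particular'' step via a witness $x_0\in L\setminus\{0,1\}$ and the observation $[x_0)_A=[x_0)_L$ is identical in both arguments.
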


\begin{proof} For all $x\in L\setminus \{1\}$ and all $y\in M\setminus \{1\}$, we have: $0\neq x=(x\wedge y)\vee (x\wedge y^{\Delta })=0\vee (x\wedge y^{\Delta })=x\wedge y^{\Delta }$, hence $y^{\Delta }\geq x$. Since $|L|>2$, there exists a $u\in L\setminus \{0,1\}$. Then $y^{\Delta }\in [u)_A=[u)_L\subset L$.

Analogously for $x^{\Delta }$, and dually for the dual weak complementation.\end{proof}

\begin{proposition}\begin{itemize}
\item $L\in \S _{\WCL }(A)$ iff $\cdot ^{\Delta \mbox{\textcircled{L}}}$ is trivial and $\cdot ^{\Delta }\mid _L=\cdot ^{\Delta \mbox{\textcircled{L}}}$. Similarly for $M$.
\item $L\in \S _{\WDCL }(A)$ iff $\cdot ^{\nabla \mbox{\textcircled{L}}}$ is trivial and $\cdot ^{\nabla }\mid _L=\cdot ^{\nabla \mbox{\textcircled{L}}}$. Similarly for $M$.
\item $L\in \S _{\WDL }(A)$ iff $(\cdot ^{\Delta \mbox{\textcircled{L}}},\cdot ^{\nabla \mbox{\textcircled{L}}})$ is trivial, $\cdot ^{\Delta }\mid _L=\cdot ^{\Delta \mbox{\textcircled{L}}}$ and $\cdot ^{\nabla }\mid _L=\cdot ^{\nabla \mbox{\textcircled{L}}}$. Similarly for $M$.\end{itemize}\label{hsumwdls}\end{proposition}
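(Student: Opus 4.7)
The plan is to prove the first bullet directly from Lemma \ref{lhsum}; the second bullet will follow by the order dual (applied to the horizontal sum $L^d \boxplus M^d$, whose weak complementation encodes the dual weak complementation on $L\boxplus M$); and the third bullet is immediate from the first two, since the $\WDL$--signature is the union of the $\WCL$-- and $\WDCL$--signatures, so a $\WDL$--subalgebra is precisely a subset closed under the operations of both reducts.

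For the nontrivial direction of the first bullet, I assume $L\in \S_{\WCL}(A)$. By the definition of subalgebra, $\cdot^{\Delta}\mid_L=\cdot^{\Delta\mbox{\textcircled{L}}}$ and $L$ is closed under $\cdot^{\Delta}$. Fix $x\in L\setminus\{1\}$. By Lemma \ref{lhsum} we have $x^{\Delta}\in M$, so $x^{\Delta}\in L\cap M=\{0,1\}$. Picking any $u\in M\setminus\{0,1\}$ (which exists because $|M|>2$), Lemma \ref{lhsum} also gives $u\leq x^{\Delta}$, forcing $x^{\Delta}\neq 0$, hence $x^{\Delta}=1$. Combined with the identity $1^{\Delta}=0$ valid in $\WCL$, this shows that $\cdot^{\Delta}\mid_L$ acts on $L$ as the trivial weak complementation, so $\cdot^{\Delta\mbox{\textcircled{L}}}$ is trivial.

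For the converse of the first bullet, if $\cdot^{\Delta\mbox{\textcircled{L}}}$ is trivial and equals $\cdot^{\Delta}\mid_L$, then $\cdot^{\Delta}$ sends every element of $L$ into $\{0,1\}\subseteq L$, so $L$ is closed under $\cdot^{\Delta}$, meaning $L\in \S_{\WCL}(A)$. The second bullet is simply the order-dual statement: for $x\in L\setminus\{0\}$, Lemma \ref{lhsum} gives $x^{\nabla}\in M$ with $x^{\nabla}\leq u$ for any $u\in M\setminus\{0,1\}$, so $x^{\nabla}\in L\cap M\cap\{y:y\neq 1\}=\{0\}$, and together with $0^{\nabla}=1$ this makes $\cdot^{\nabla}\mid_L$ the trivial dual weak complementation. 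The third bullet is the conjunction of the first two, using that $L\in \S_{\WDL}(A)$ iff $L$ is a subalgebra under both the weak complementation reduct and the dual weak complementation reduct.

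Since Lemma \ref{lhsum} supplies exactly the input needed, no step poses a real obstacle. The one point to be careful about is the distinction between $\cdot^{\Delta\mbox{\textcircled{L}}}$ (the abstract weak complementation already given on $L$) and $\cdot^{\Delta}\mid_L$ (the restriction of the weak complementation on $A$): the statement asks for both that these coincide and that the common value is trivial, and the argument above first shows that $\cdot^{\Delta}\mid_L$ is trivial, from which triviality of $\cdot^{\Delta\mbox{\textcircled{L}}}$ follows via the subalgebra compatibility condition.
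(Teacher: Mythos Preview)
Your proof is correct and follows essentially the same approach as the paper's: both use Lemma \ref{lhsum} to force $x^{\Delta}\in L\cap M=\{0,1\}$ for $x\in L\setminus\{1\}$, conclude that the restriction is trivial, and treat the converse and the dual/dicomplemented cases as immediate. Your version is slightly more explicit in ruling out $x^{\Delta}=0$ via the inequality $u\leq x^{\Delta}$ from Lemma \ref{lhsum}, whereas the paper relies implicitly on the $\WCL$ quasiequation $x^{\Delta}\approx 0\rightarrow x\approx 1$, but this is a cosmetic difference.
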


\begin{proof} By Lemma \ref{lhsum}, if $L\in \S _{\WCL }(A)$, then, for any $x\in L\setminus \{0\}$, we have $x^{\Delta \mbox{\textcircled{L}}}=x^{\Delta }\in M\cap L=\{0,1\}$, hence $\cdot ^{\Delta \mbox{\textcircled{L}}}$ is trivial and $\cdot ^{\Delta }\mid _L=\cdot ^{\Delta \mbox{\textcircled{L}}}$. The converse is trivial.

Similarly for $M$, and dually for the dual weak complementation. Hence the statement on the dicomplementation.\end{proof}

\begin{lemma}\begin{itemize}
\item If $1$ is not strictly join--irreducible in $L$, then $\cdot ^{\Delta }\mid _M$ is the trivial weak complementation on $M$.
\item If $1$ is not strictly join--irreducible in $M$, then $\cdot ^{\Delta }\mid _L$ is the trivial weak complementation on $L$.\end{itemize}

Dually for $0$ and the dual weak complementation.\label{hsumnonsji}\end{lemma}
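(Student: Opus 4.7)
The plan is to use Lemma \ref{lhsum} to force $y^{\Delta }=1$ for every $y\in M\setminus \{1\}$. Fix such a $y$. By that lemma, $y^{\Delta }\in L$ and $y^{\Delta }\geq x$ for every $x\in L\setminus \{1\}$, so $y^{\Delta }$ is an upper bound in $L$ of the entire set $L\setminus \{1\}$.

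Now I would argue by contradiction. Assume $y^{\Delta }\neq 1$. Then $y^{\Delta }$ itself lies in $L\setminus \{1\}$, so the previous inequality forces $y^{\Delta }=\max (L\setminus \{1\})$. But a greatest proper element of a bounded lattice is necessarily the unique lower cover of the top element: any $z$ with $y^{\Delta }<z<1$ would lie in $L\setminus \{1\}$ and exceed $y^{\Delta }$, while any lower cover $c$ of $1$ would satisfy $c\leq y^{\Delta }$ and hence $c=y^{\Delta }$. This would make $1$ strictly join--irreducible in $L$, contradicting the hypothesis. Hence $y^{\Delta }=1$, and combined with $1^{\Delta }=0$ this says exactly that $\cdot ^{\Delta }\mid _M$ is the trivial weak complementation on $M$.

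The symmetric claim for $\cdot ^{\Delta }\mid _L$ when $1$ is not strictly join--irreducible in $M$ is immediate by the commutativity of the horizontal sum of nontrivial bounded lattices noted in Section \ref{preliminaries}, and the dual statements for $0$ and the dual weak complementation are obtained by running the same argument in the order dual lattice $A^d=L^d\boxplus M^d$, invoking the second bullet of Lemma \ref{lhsum} in place of the first. I do not foresee a substantive obstacle; the only point requiring care is the passage from ``$\max (L\setminus \{1\})$ exists'' to ``$1$ has a unique lower cover in $L$'', which is what underwrites the appeal to the definition of strict join--irreducibility as recalled in Section \ref{preliminaries}.
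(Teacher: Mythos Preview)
Your proof is correct and follows essentially the same approach as the paper: both arguments use Lemma~\ref{lhsum} to show that $y^{\Delta }$ dominates every element of $L\setminus \{1\}$, then conclude that $y^{\Delta }<1$ would force $1\in {\rm Sji}(L)$. Your version is slightly more streamlined in that you treat this uniformly via the existence of $\max (L\setminus \{1\})$, whereas the paper splits into the cases $1\notin {\rm Ji}(L)$ and $1\in {\rm Ji}(L)\setminus {\rm Sji}(L)$; the underlying idea is the same.
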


\begin{proof} Assume that $1\notin {\rm Sji}(L)$ and let $y\in M\setminus \{1\}$.

{\bf Case 1:} $1\notin {\rm Ji}(L)$.

By Lemma \ref{lhsum}, if $1$ is join--reducible in $L$, so that $1=a\vee b$ for some $a,b\in L\setminus \{1\}$, then $y^{\Delta }\geq a$ and $y^{\Delta }\geq b$, thus $y^{\Delta }=1$, so that $\cdot ^{\Delta }\mid _M$ is the trivial weak complementation on $M$.

{\bf Case 2:} $1\in {\rm Ji}(L)\setminus {\rm Sji}(L)$ (so that $L$ is not finite).

If $1$ is join--irreducible, but not strictly join--irreducible in $L$, then $1$ has no predecessors in $L$. Assume by absurdum that $y^{\Delta }<1$. Then, since $y^{\Delta }\nprec 1$, it follows that there exists some $z\in L$ such that $y^{\Delta }<z<1$, so $z\in L\setminus \{1\}$ and $y^{\Delta }\ngeq z$, which is a contradiction to Lemma \ref{lhsum}. Hence $\cdot ^{\Delta }\mid _M$ is the trivial weak complementation on $M$.

Similarly for the case when $1\notin {\rm Sji}(M)$.\end{proof}

\begin{remark} Recall the notations at the end of Section \ref{thealg} and note that, for any bounded lattice $K$, the weak complementation on $K$ is $\cdot ^{\Delta K}$ and the dual weak complementation on $K$ is $\cdot ^{\nabla K}$.\end{remark}

With the notations in Sections \ref{thealg} and \ref{coatomic}, we have:

\begin{theorem} $A=L\boxplus M$ has nontrivial weak complementations iff $1$ is strictly join--irreducible in each of the lattices $L$ and $M$, case in which $A$ has only these two weak complementations, both of which are representable: the trivial weak complementation $\cdot ^{\Delta A}$ and the nontrivial weak complementation $\cdot ^{\Delta 1^{-L},1^{-M}}=\cdot ^{\Delta (1^{-L}]_A\cup (1^{-M}]_A}$, which satisfy $\cdot ^{\Delta A}<\cdot ^{\Delta 1^{-L},1^{-M}}$. In the particular case when $A$ is complete and dually algebraic, $\cdot ^{\Delta 1^{-L},1^{-M}}=\cdot ^{\Delta {\rm Sji}(A)}$. \label{mainth}\end{theorem}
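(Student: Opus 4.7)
The plan splits into four steps.

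\textbf{Necessity.} I argue the contrapositive. Assume $1 \notin {\rm Sji}(L)$ and let $\cdot^{\Delta}$ be a weak complementation on $A$. Lemma~\ref{hsumnonsji} forces $y^{\Delta} = 1$ for every $y \in M \setminus \{1\}$, and Lemma~\ref{lhsum} places $x^{\Delta}$ in $M$ above every element of $M \setminus \{1\}$ for each $x \in L \setminus \{1\}$. If $x^{\Delta} \neq 1$, then $x^{\Delta} \in M \setminus \{1\}$ would give $x^{\Delta\Delta} = 1 \leq x$, contradicting $x \neq 1$; hence $x^{\Delta} = 1$ throughout $L \setminus \{1\}$ and $\cdot^{\Delta}$ is trivial. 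A symmetric argument handles the case $1 \notin {\rm Sji}(M)$, proving that a nontrivial weak complementation forces $1 \in {\rm Sji}(L) \cap {\rm Sji}(M)$.

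\textbf{Existence, description and representability.} Assuming $1 \in {\rm Sji}(L) \cap {\rm Sji}(M)$, I propose $\cdot^{\Delta 1^{-L}, 1^{-M}}$ from Section~\ref{coatomic} as the nontrivial weak complementation on $A$. Order--reversal and $x^{\Delta\Delta} \leq x$ are immediate from the defining values. The main obstacle I anticipate is the absorption identity $(x \wedge y) \vee (x \wedge y^{\Delta}) = x$, which I plan to handle by a case split on $y$: when $y \notin \{0, 1, 1^{-L}, 1^{-M}\}$ one has $y^{\Delta} = 1$ and the identity is automatic, and the essential cases $y \in \{1^{-L}, 1^{-M}\}$ reduce to the horizontal--sum facts that the meet of two elements lying in different wings is $0$, their join is $1$, and in particular $1^{-L} \vee 1^{-M} = 1$. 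For representability, take $J = (1^{-L}]_A \cup (1^{-M}]_A$; then $J = A \setminus \{1\}$ and $1 = 1^{-L} \vee 1^{-M}$ ensure join--density, while a direct computation of $\bigvee (J \setminus (x]_A)$ for each relevant value of $x$ recovers $\cdot^{\Delta 1^{-L}, 1^{-M}}$. The trivial complementation is representable by $J = A$.

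\textbf{Uniqueness.} For any weak complementation $\cdot^{\Delta}$ on $A$, Lemma~\ref{lhsum} constrains $x^{\Delta}$ to $\{1^{-M}, 1\}$ for $x \in L \setminus \{1\}$, and dually on $M$. I split on $(1^{-L})^{\Delta}$. In the case $(1^{-L})^{\Delta} = 1$, assuming that some $x \in L \setminus \{1\}$ has $x^{\Delta} = 1^{-M}$ leads, via $x^{\Delta\Delta} \leq x$, to $(1^{-M})^{\Delta} = 1^{-L}$; applying $\cdot^{\Delta}$ once more then clashes with $(1^{-L})^{\Delta} = 1$, so $x^{\Delta} = 1$ throughout $L \setminus \{1\}$, and the analogous argument on $M$ gives $\cdot^{\Delta} = \cdot^{\Delta A}$. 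In the case $(1^{-L})^{\Delta} = 1^{-M}$, the inequality $(1^{-L})^{\Delta\Delta} \leq 1^{-L}$ combined with the two allowed values forces $(1^{-M})^{\Delta} = 1^{-L}$, and for $x \in L \setminus \{1, 1^{-L}\}$ the assumption $x^{\Delta} = 1^{-M}$ contradicts the identity $(x \wedge 1^{-L}) \vee (x \wedge 1^{-M}) = x$ because the cross--wing meet $x \wedge 1^{-M}$ collapses to $0$ and $x \wedge 1^{-L} \neq x$; the symmetric argument on $M$ then pins $\cdot^{\Delta}$ down to $\cdot^{\Delta 1^{-L}, 1^{-M}}$. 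The strict comparison between the two complementations follows by inspecting their values at $1^{-L}$.

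\textbf{The complete dually algebraic case.} Here ${\rm Sji}(A)$ is join--dense in $A$, so $\cdot^{\Delta {\rm Sji}(A)}$ is a weak complementation on $A$ by the observations of Section~\ref{thealg} and, by uniqueness, must coincide with one of the two operations above. Since $1$ has two distinct predecessors $1^{-L}, 1^{-M}$ in $A$, we have $1 \notin {\rm Sji}(A)$, so the set $\{s \in {\rm Sji}(A) : s \not\leq 1^{-L}\}$ is contained in $M \setminus \{1\}$ and hence bounded above by $1^{-M}$; join--density of ${\rm Sji}(A)$ through $1^{-M}$ then forces its supremum to be exactly $1^{-M}$. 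Therefore $(1^{-L})^{\Delta {\rm Sji}(A)} = 1^{-M}$, which identifies $\cdot^{\Delta {\rm Sji}(A)}$ with the nontrivial operation $\cdot^{\Delta 1^{-L}, 1^{-M}}$.
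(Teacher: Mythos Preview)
Your overall strategy mirrors the paper's: both proofs lean on Lemma~\ref{lhsum} to pin $x^{\Delta}$ into the opposite wing, on Lemma~\ref{hsumnonsji} for the necessity direction, and on the inequality $x^{\Delta\Delta}\leq x$ to rule out the mixed assignments $(1^{-L})^{\Delta}=1^{-M}$, $(1^{-M})^{\Delta}=1$ and its mirror image. The existence/representability and complete dually algebraic steps are fine (the paper simply cites Proposition~\ref{2coat}~(\ref{2coat1}) for the former).

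There is, however, a genuine slip in your Uniqueness step. In the case $(1^{-L})^{\Delta}=1^{-M}$, for $x\in L\setminus\{1,1^{-L}\}$ you invoke the identity $(x\wedge 1^{-L})\vee(x\wedge 1^{-M})=x$ and claim $x\wedge 1^{-L}\neq x$. But since $1\in{\rm Sji}(L)$, every $x\in L\setminus\{1\}$ satisfies $x\leq 1^{-L}$, so $x\wedge 1^{-L}=x$ and your displayed identity collapses to $x=x$, yielding no contradiction. You have instantiated the absorption axiom $(a\wedge b)\vee(a\wedge b^{\Delta})=a$ with $a=x$, $b=1^{-L}$; the correct move is to swap the roles, taking $a=1^{-L}$ and $b=x$ (under the assumption $x^{\Delta}=1^{-M}$), which gives
\[
(1^{-L}\wedge x)\vee(1^{-L}\wedge 1^{-M})=1^{-L},\quad\text{i.e.}\quad x\vee 0=1^{-L},
\]
forcing $x=1^{-L}$ and contradicting $x\notin\{1,1^{-L}\}$. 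This is exactly the computation the paper performs (with $y$ in place of $1^{-L}$, ranging over all of $L\setminus\{1\}$), and with this correction your argument goes through.
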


\begin{proof} If $1\in {\rm Sji}(L)\cap {\rm Sji}(M)$, then $A=L\boxplus M$ is a coatomic lattice with exactly two coatoms, $1^{-L}\in L$ and $1^{-M}\in M$, thus, by Proposition \ref{2coat}, (\ref{2coat1}), it has the nontrivial weak complementation $\cdot ^{\Delta 1^{-L},1^{-M}}$.

Now assume that $A$ has a nontrivial weak complementation $\cdot ^{\Delta }$ and let $x\in A$ such that $x^{\Delta }\neq 1$. Then $x\neq 0$ and w.l.g. we may assume that $x\in L$. Then, by Lemma \ref{hsumnonsji}, $1\in {\rm Sji}(M)$ and $x^{\Delta }=1^{-M}$, the unique coatom of $M$. Hence, for all $y\in L\setminus \{1\}$, we have $y=(y\wedge x)\vee (y\wedge x^{\Delta })=(y\wedge x)\vee (y\wedge 1^{-M})=(y\wedge x)\vee 0=y\wedge x$, thus $y\leq x$. Therefore $x=\max (L\setminus \{1\})$, so $1\in {\rm Sji}(L)$ and $x=1^{-L}$, the unique coatom of $L$.

If $(1^{-L})^{\Delta }=1^{-M}$ and $(1^{-M})^{\Delta }=1$, then $(1^{-L})^{\Delta \Delta }=1\nleq 1^{-L}$, which contradicts the definition of a weak complementation. We get a similar contradiction if we assume that $(1^{-L})^{\Delta }=1$ and $(1^{-M})^{\Delta }=1$.

Consequently, $\cdot ^{\Delta }=\cdot ^{\Delta 1^{-L},1^{-M}}$.

Either by the property of weak complementations on complete dually algebraic lattices at the end of Section \ref{thealg} or directly from the definition of such a lattice, along with the fact that ${\rm Sji}(A)=({\rm Sji}(L)\cup {\rm Sji}(M))\setminus \{1\}$ and the fact that $A$ is complete and dually algebraic iff both $L$ and $M$ are complete and dually algebraic, we get the last statement in the enunciation.\end{proof}

\begin{corollary} If $K$ is a bounded lattice with $|K|>2$, then $K\boxplus L\boxplus M$ can only be endowed with the trivial weak dicomplementation.\label{triplehsum}\end{corollary}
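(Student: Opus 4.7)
The plan is to argue by contradiction, applying Theorem~\ref{mainth} to two different binary decompositions of $A := K \boxplus L \boxplus M$; this is available because the horizontal sum of nontrivial bounded lattices is commutative and associative, and because $|K|,|L|,|M|>2$ makes every pairwise horizontal sum of two of the three factors a bounded lattice with more than two elements, so Theorem~\ref{mainth} applies to any such binary presentation of $A$.

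Suppose $A$ carries a nontrivial weak complementation. I split into two cases. If each of $K$, $L$, $M$ has at least one coatom, set $N := K \boxplus L$: any coatom of $K$ lies in $K \setminus \{0,1\}$ and is therefore comparable in $N$ only with elements of $K$, so it remains a coatom of $N$, and likewise for $L$. Hence $N$ has at least two distinct coatoms, so $1 \notin \mathrm{Sji}(N)$, and Theorem~\ref{mainth} applied to $A = N \boxplus M$ forbids a nontrivial weak complementation on $A$, a contradiction. Otherwise some factor, which by commutativity of $\boxplus$ I may take to be $L$, has no coatom at all; then $1$ has no predecessor in $L$, so $1 \notin \mathrm{Sji}(L)$, and Theorem~\ref{mainth} applied to the rewriting $A = (K \boxplus M) \boxplus L$ again yields a contradiction.

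The analogous argument, with coatoms replaced by atoms and $\mathrm{Sji}$ replaced by $\mathrm{Smi}$, obtained from Theorem~\ref{mainth} by lattice duality, shows that $A$ likewise admits no nontrivial dual weak complementation. Combining the two conclusions, the only weak dicomplementation on $A$ is the trivial one. The main delicate point is that Theorem~\ref{mainth} handles binary horizontal sums while $A$ is a ternary one, so one must choose the right binary regrouping of $A$ in each branch of the case split; the split is arranged precisely so that commutativity of $\boxplus$ always lets me place some factor failing the $\mathrm{Sji}$ hypothesis on one side of a binary decomposition of $A$.
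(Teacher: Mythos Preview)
Your proof is correct, but it takes a more circuitous route than the paper's. The paper simply invokes the observation already recorded at the start of Section~\ref{hsums}: in any horizontal sum $P\boxplus Q$ with $|P|,|Q|>2$, the element $1$ is join-\emph{reducible}, since choosing any $p\in P\setminus\{0,1\}$ and $q\in Q\setminus\{0,1\}$ gives $p\vee q=1$ with $p,q<1$. Hence $1\notin\mathrm{Ji}(K\boxplus L)\supseteq\mathrm{Sji}(K\boxplus L)$ automatically, and Theorem~\ref{mainth} applied to $A=(K\boxplus L)\boxplus M$ finishes in one line; dually for the dual weak complementation.

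Your case split on whether every factor has a coatom is therefore unnecessary: you are working to establish $1\notin\mathrm{Sji}$ on one side of a binary decomposition via the existence (or nonexistence) of coatoms, when the stronger fact $1\notin\mathrm{Ji}$ holds for any binary horizontal sum of the factors without any hypothesis on coatoms. Both arguments are valid; the paper's is shorter and avoids the branching.
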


\begin{proof} By Theorem \ref{mainth} and the fact that $1\notin {\rm Ji}(K\boxplus L)\cup {\rm Ji}(K\boxplus M)\cup {\rm Ji}(L\boxplus M)$ and $0\notin {\rm Mi}(K\boxplus L)\cup {\rm Mi}(K\boxplus M)\cup {\rm Mi}(L\boxplus M)$.\end{proof}

\begin{remark} Recall from \cite{eunoucard} that:\begin{itemize}
\item ${\rm Con}_{01}(A)={\rm Con}_{01}(L\boxplus M)=\{\alpha \boxplus \beta \ |\ \alpha \in {\rm Con}_{01}(L),\beta \in {\rm Con}_{01}(M)\}\cong {\rm Con}_{01}(L)\times {\rm Con}_{01}(M)$ and
\item ${\rm Con}_{01}(A)\cup \{A^2\}\subseteq {\rm Con}(A)\subseteq {\rm Con}_{01}(A)\cup \{eq(L\setminus \{0\},M\setminus \{1\}),eq(L\setminus \{1\},M\setminus \{0\}),A^2\}$, and:

$eq(L\setminus \{0\},M\setminus \{1\})\in {\rm Con}(A)$ iff $0\in {\rm Mi}(L)$ and $1\in {\rm Ji}(M)$,

$eq(L\setminus \{1\},M\setminus \{0\})\in {\rm Con}(A)$ iff $1\in {\rm Ji}(L)$ and $0\in {\rm Mi}(M)$.\end{itemize}

Since $|L|,|M|>2$, we have $eq(L\setminus \{0\},M\setminus \{1\}),eq(L\setminus \{1\},M\setminus \{0\})\notin {\rm Con}_0(A)\cup {\rm Con}_1(A)$, hence ${\rm Con}_0(A)={\rm Con}_1(A)={\rm Con}_{01}(A)$.\label{thelatcg}\end{remark}

\begin{remark} By Proposition \ref{hsumwdls}: $L,M\in \S _{\WCL }(A)$ iff $\cdot ^{\Delta }$, $\cdot ^{\Delta \mbox{\textcircled{L}}}$ and $\cdot ^{\Delta \mbox{\textcircled{M}}}$ are trivial, and similarly for $\WDCL $ and $\WDL $. In this case, $A$ can be considered as the horizontal sums of the algebras $L$ and $M$ from $\WCL $, $\WDCL $ and $\WDL $, respectively, and, by Remarks \ref{cghsumtriv} and \ref{thelatcg}, w.r.t. the trivial weak dicomplementations, ${\rm Con}_{\WDL }(A)={\rm Con}_{\WCL }(A)={\rm Con}_{\WDCL }(A)={\rm Con}_{01}(A)\cup \{A^2\}=\{\alpha \boxplus \beta \ |\ \alpha \in {\rm Con}_{01}(L)={\rm Con}_{\WDL }(L)={\rm Con}_{\WCL 0}(L)={\rm Con}_{\WDCL 1}(L),\beta \in {\rm Con}_{01}(M)={\rm Con}_{\WDL }(M)={\rm Con}_{\WCL 0}(M)={\rm Con}_{\WDCL 1}(M)\}\cup \{A^2\}$, so the horizontal sum cancels congruences in $\WCL $ and $\WDCL $, while keeping congruences in $\WDL $ in place.\end{remark}

\begin{remark} W.r.t. the trivial weak dicomplementation, we have, by Remark \ref{thelatcg}: ${\rm Con}_{\WCL }(A,\cdot ^{\Delta A})=$\linebreak ${\rm Con}_{\WDCL }(A,\cdot ^{\nabla A})={\rm Con}_{\WDL }(A,\cdot ^{\Delta A},\cdot ^{\nabla A})={\rm Con}_{01}(A)\cup \{A^2\}\cong ({\rm Con}_{01}(L)\times {\rm Con}_{01}(M))\oplus {\cal C}_2$.

For example, since ${\cal C}_2^2={\cal C}_3\boxplus {\cal C}_3$ and ${\rm Con}_{01}({\cal C}_2^2)=\{\mbox{\bf =}_{{\cal C}_2^2}\}$, we have: ${\rm Con}_{\WCL }({\cal C}_2^2,\cdot ^{\Delta {\cal C}_2^2})={\rm Con}_{\WDCL }({\cal C}_2^2,\cdot ^{\nabla {\cal C}_2^2})={\rm Con}_{\WDL }({\cal C}_2^2,\cdot ^{\Delta {\cal C}_2^2},\cdot ^{\nabla {\cal C}_2^2})=\{\mbox{\bf =}_{{\cal C}_2^2},({\cal C}_2^2)^2\}$.\label{cghsumtriv}\end{remark}

\begin{proposition} If $A$ has a nontrivial weak complementation, then, if we denote by $\phi =eq(L\setminus \{0\},M\setminus \{1\})$ and $\psi =eq(L\setminus \{1\},M\setminus \{0\})$, we have:\begin{enumerate}
\item\label{cghsumnontriv1} if $A=\{0,1^{-L},1^{-M},1\}\cong {\cal C}_2^2$, or, equivalently, $L=\{0,1^{-L},1\}\cong {\cal C}_3$ and $M=\{0,1^{-M},1\}\cong {\cal C}_3$, then $\cdot ^{\Delta 1^{-L},1^{-M}}$ is the Boolean complementation and:$${\rm Con}_{\WCL }(A,\cdot ^{\Delta 1^{-L},1^{-M}})=\{\mbox{\bf =}_A,\phi ,\psi ,A^2\}\cong {\cal C}_2^2,$$so $(A,\cdot ^{\Delta 1^{-L},1^{-M}})$ is not subdirectly irreducible in $\WCL $;

\item\label{cghsumnontriv2} if $L\cong {\cal C}_3$ and $|M|>3$, then:$${\rm Con}_{\WCL }(A,\cdot ^{\Delta 1^{-L},1^{-M}})=\{\mbox{\bf =}_L\boxplus (\delta \oplus \mbox{\bf =}_{{\cal C}_2})\ |\ \delta \in {\rm Con}_{01}((1^{-M}]_M)\}\cup \{\phi ,A^2\}\cong {\rm Con}_{01}((1^{-M}]_M)\oplus {\cal C}_3,$$in particular $(A,\cdot ^{\Delta 1^{-L},1^{-M}})$ is subdirectly irreducible in $\WCL $ iff $\mbox{\bf =}_{(1^{-M}]_M}\in {\rm Smi}({\rm Con}_{01}((1^{-M}]_M))$ iff $((1^{-M}]_M,\cdot ^{\Delta (1^{-M}]_M},\cdot ^{\nabla (1^{-M}]_M})$ is subdirectly irreducible in $\WDL $;

\item\label{cghsumnontriv4} if $|L|>3$ and $|M|>3$, then:$${\rm Con}_{\WCL }(A,\cdot ^{\Delta 1^{-L},1^{-M}})=\{(\gamma \oplus \mbox{\bf =}_{{\cal C}_2})\boxplus (\delta \oplus \mbox{\bf =}_{{\cal C}_2})\ |\ \gamma \in {\rm Con}_{01}((1^{-L}]_L),\delta \in {\rm Con}_{01}((1^{-M}]_M)\}\cup \{A^2\}$$$$\cong ({\rm Con}_{01}((1^{-L}]_L)\times {\rm Con}_{01}((1^{-M}]_M))\oplus {\cal C}_2,$$in particular $(A,\cdot ^{\Delta 1^{-L},1^{-M}})$ is subdirectly irreducible in $\WCL $ iff ${\rm Con}_{01}((1^{-L}]_L)=\{\mbox{\bf =}_{(1^{-L}]_L}\}$ and $\mbox{\bf =}_{(1^{-M}]_M}\in {\rm Smi}({\rm Con}_{01}((1^{-M}]_M))$ or vice--versa iff ${\rm Con}_{01}((1^{-L}]_L)=\{\mbox{\bf =}_{(1^{-L}]_L}\}$ and $((1^{-M}]_M,\cdot ^{\Delta (1^{-M}]_M},\cdot ^{\nabla (1^{-M}]_M})$ is subdirectly irreducible in $\WDL $ or vice--versa.\end{enumerate}\label{cghsumnontriv}\end{proposition}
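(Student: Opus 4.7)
The plan is to begin with Theorem~\ref{mainth}: the hypothesis that $A$ admits a nontrivial weak complementation forces $1\in{\rm Sji}(L)\cap{\rm Sji}(M)$, so $L=(1^{-L}]_L\oplus{\cal C}_2$ and $M=(1^{-M}]_M\oplus{\cal C}_2$, and the only nontrivial weak complementation on $A$ is $\cdot^{\Delta 1^{-L},1^{-M}}$, whose non-identity values are $0\mapsto 1$, $1\mapsto 0$, $1^{-L}\leftrightarrow 1^{-M}$, and $x\mapsto 1$ for every other $x\in A$. By Remark~\ref{thelatcg}, every lattice congruence of $A$ lies in ${\rm Con}_{01}(A)\cup\{\phi,\psi,A^2\}$, with ${\rm Con}_{01}(A)=\{\alpha\boxplus\beta\mid\alpha\in{\rm Con}_{01}(L),\beta\in{\rm Con}_{01}(M)\}$.

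I would next test each candidate for compatibility with $\cdot^{\Delta 1^{-L},1^{-M}}$. For $\theta=\alpha\boxplus\beta\in{\rm Con}_{01}(A)$, the $\theta$-classes of $0$ and $1$ are already singletons, so preservation reduces to $\{1^{-L}\}$ and $\{1^{-M}\}$ being singletons of $\theta$ as well, i.e.\ $\alpha\in{\rm Con}_{0,1^{-L},1}(L)$ and $\beta\in{\rm Con}_{0,1^{-M},1}(M)$. Applying Proposition~\ref{cgordsum} to the ordinal-sum decompositions of $L$ and $M$ translates these into $\alpha=\gamma\oplus\mbox{\bf =}_{{\cal C}_2}$ and $\beta=\delta\oplus\mbox{\bf =}_{{\cal C}_2}$ with $\gamma\in{\rm Con}_{01}((1^{-L}]_L)$ and $\delta\in{\rm Con}_{01}((1^{-M}]_M)$. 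For $\phi$ (assuming it lies in ${\rm Con}(A)$), any $x\in L\setminus\{0,1,1^{-L}\}$ would witness $x\,\phi\,1^{-L}$ while $x^\Delta=1$ and $(1^{-L})^\Delta=1^{-M}$ sit in different $\phi$-classes; hence $\phi$ preserves the weak complementation iff no such $x$ exists, iff $|L|=3$. Symmetrically $\psi$ is compatible iff $|M|=3$, and $A^2$ is always compatible. Sorting by whether $|L|$ and $|M|$ equal $3$ or exceed $3$ yields the three displayed sets of congruences.

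The ordering is then read off directly from partition-class inclusion: each $(\gamma\oplus\mbox{\bf =}_{{\cal C}_2})\boxplus(\delta\oplus\mbox{\bf =}_{{\cal C}_2})$ sits below whichever of $\phi,\psi$ is present, and $A^2$ is the top. This produces the ${\cal C}_2^2$ of (i), the ordinal sum ${\rm Con}_{01}((1^{-M}]_M)\oplus{\cal C}_3$ of (ii) (the three top elements being the maximum of ${\rm Con}_{01}((1^{-M}]_M)$, then $\phi$, then $A^2$), and $({\rm Con}_{01}((1^{-L}]_L)\times{\rm Con}_{01}((1^{-M}]_M))\oplus{\cal C}_2$ of (iii). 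The subdirect-irreducibility statements reduce to identifying the covers of $\mbox{\bf =}_A$ in these lattices; the $\WDL$-reformulation used in (ii) and (iii) is just that $(1^{-M}]_M$ endowed with its trivial weak dicomplementation has $\WDL$-congruence lattice ${\rm Con}_{01}((1^{-M}]_M)\cup\{((1^{-M}]_M)^2\}$ by the observations in Section~\ref{thealg}, so that strict meet-irreducibility of $\mbox{\bf =}_{(1^{-M}]_M}$ there mirrors that of $\mbox{\bf =}_A$ in our congruence lattice.

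The step I expect to be most delicate is the preservation test for $\phi$ and $\psi$: one must handle the cases depending on whether $|L|$ and $|M|$ equal $3$ or exceed $3$, and must also recall that membership of $\phi,\psi$ in ${\rm Con}(A)$ is itself conditional on meet-irreducibility of $0$ in the appropriate summand (automatic in case (i) and, for the $L$-side, in case (ii)). A small but easy-to-miss subtlety in (ii) is the edge case ${\rm Con}_{01}((1^{-M}]_M)=\{\mbox{\bf =}_{(1^{-M}]_M}\}$: there the unique cover of the bottom in the ordinal sum comes from the attached ${\cal C}_3$ rather than from within ${\rm Con}_{01}((1^{-M}]_M)$, and the $\WDL$-formulation is the cleanest way to package this uniformly.
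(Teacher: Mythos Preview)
Your proposal is correct and follows essentially the same approach as the paper's proof: both invoke Theorem~\ref{mainth} to get $1\in{\rm Sji}(L)\cap{\rm Sji}(M)$ and the explicit form of $\cdot^{\Delta 1^{-L},1^{-M}}$, use Remark~\ref{thelatcg} to list all lattice congruences of $A$, and then test each candidate (the elements of ${\rm Con}_{01}(A)$, then $\phi$, then $\psi$) for compatibility with the weak complementation, obtaining that $\alpha\boxplus\beta$ is compatible iff $1^{-L}/\alpha$ and $1^{-M}/\beta$ are singletons, while $\phi$ (resp.\ $\psi$) is compatible iff $|L|=3$ (resp.\ $|M|=3$). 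One cosmetic point: the translation of $\alpha\in{\rm Con}_{0,1^{-L},1}(L)$ into $\alpha=\gamma\oplus\mbox{\bf =}_{{\cal C}_2}$ with $\gamma\in{\rm Con}_{01}((1^{-L}]_L)$ uses the elementary description of ${\rm Con}(L\oplus M)$ from Section~\ref{preliminaries} rather than Proposition~\ref{cgordsum} proper, but the reasoning is sound.
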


\begin{proof} Remark \ref{cghsumtriv} gives us the congruences of the weakly complemented lattice $(A,\cdot ^{\Delta _A})$.

Since $A$ admits other weak complementations except $\cdot ^{\Delta A}$, by Theorem \ref{mainth} and Remark \ref{thelatcg} we have $1\in {\rm Sji}(L)\cap {\rm Sji}(M)$ and thus:\begin{itemize}
\item $\phi \in {\rm Con}(A)$ iff $0\in {\rm Ji}(L)$, while $\psi \in {\rm Con}(A)$ iff $0\in {\rm Ji}(M)$;
\item $L=(1^{-L}]_L\oplus {\cal C}_2$, so ${\rm Con}(L)=\{\gamma \oplus \zeta \ |\ \gamma \in {\rm Con}((1^{-L}]_L),\zeta \in {\rm Con}({\cal C}_2)\}=\{\gamma \oplus \mbox{\bf =}_{{\cal C}_2},\gamma \oplus {\cal C}_2^2\ |\ \gamma \in {\rm Con}((1^{-L}]_L)\}$;
\item $M=(1^{-M}]_M\oplus {\cal C}_2$, so, analogously, ${\rm Con}(M)=\{\delta \oplus \mbox{\bf =}_{{\cal C}_2},\delta \oplus {\cal C}_2^2\ |\ \delta \in {\rm Con}((1^{-M}]_M)\}$.\end{itemize}

Let $\theta \in {\rm Con}(A)\setminus \{A^2\}$, arbitrary, and let us consider the following conditions on the lattice congruence $\theta $:

\begin{flushleft}\begin{tabular}{rl}
$(LM)(\theta )$ & $(\forall \, x\in A\setminus \{1^{-L},1^{-M}\})\, (x\theta 1^{-L}\Rightarrow x^{\Delta 1^{-L},1^{-M}}\theta 1^{-M})$\\ 
$(ML)(\theta )$ & $(\forall \, x\in A\setminus \{1^{-L},1^{-M}\})\, (x\theta 1^{-M}\Rightarrow x^{\Delta 1^{-L},1^{-M}}\theta 1^{-L})$\end{tabular}\end{flushleft}

Since $\theta \neq A^2$, we have $\theta =(\theta \cap L^2)\boxplus (\theta \cap M^2)$, with $\theta \cap L^2\in {\rm Con}(L)\setminus \{L^2\}$ and $\theta \cap M^2\in {\rm Con}(M)\setminus \{M^2\}$ since $L$ and $M$ are sublattices of $A$ and $(0,1)\notin \theta $, so that $(0,1)\notin \theta \cap L^2$ and $(0,1)\notin \theta \cap M^2$. Thus, for any $x\in L\setminus \{0,1\}=L\setminus M$ and any $y\in M\setminus \{0,1\}=M\setminus L$, $(x,y)\notin \alpha \boxplus \beta $, so that $x/\theta =x/(\theta \cap L^2)$ and $y/\theta =y/(\theta \cap M^2)$, in particular, $(1^{-L},1^{-M})\notin \theta $ and hence:$$\theta \in {\rm Con}_{\WCL }(A,\cdot ^{\Delta 1^{-L},1^{-M}})\mbox{ iff it satisfies conditions }(LM)(\theta )\mbox{ and }(ML)(\theta ).$$

Now let $\alpha \in {\rm Con}_{01}(L)$ and $\beta \in {\rm Con}_{01}(M)$, so that $\alpha \boxplus \beta \in {\rm Con}_{01}(A)\subseteq {\rm Con}(A)\setminus \{A^2\}$, $(\alpha \boxplus \beta )\cap L^2=\alpha $ and $(\alpha \boxplus \beta )\cap M^2=\beta $, $x/\alpha \subseteq L\setminus \{0,1\}$ for any $x\in L\setminus \{0,1\}$ and $y/\beta \subseteq M\setminus \{0,1\}$ for any $y\in M\setminus \{0,1\}$. In particular, $1^{-L}/(\alpha \boxplus \beta )=1^{-L}/\alpha \subseteq L\setminus \{0,1\}$ and $1^{-M}/(\alpha \boxplus \beta )=1^{-M}/\beta \subseteq M\setminus \{0,1\}$.

If there exists an $x\in 1^{-L}/(\alpha \boxplus \beta )=1^{-L}/\alpha $, then $x^{\Delta 1^{-L},1^{-M}}/(\alpha \boxplus \beta )=1/(\alpha \boxplus \beta )=\{1\}\neq 1^{-M}/\beta =1^{-M}/(\alpha \boxplus \beta )$, so condition $(LM)(\alpha \boxplus \beta )$ fails. Clearly, if $1^{-L}/\alpha =\{1^{-L}\}$, then condition $(LM)(\alpha \boxplus \beta )$ is satisfied.

Similarly, condition $(ML)(\alpha \boxplus \beta )$ is satisfied iff $1^{-M}/\beta =\{1^{-M}\}$.

Therefore: $\alpha \boxplus \beta \in {\rm Con}_{\WCL }(A,\cdot ^{\Delta 1^{-L},1^{-M}})$ iff both conditions $(LM)(\alpha \boxplus \beta )$ and $(ML)(\alpha \boxplus \beta )$ are satisfied iff $1^{-L}/\alpha =\{1^{-L}\}$ and $1^{-M}/\beta =\{1^{-M}\}$ iff $\alpha =\gamma \oplus \mbox{\bf =}_{{\cal C}_2}$ for some $\gamma \in {\rm Con}_{01}((1^{-L}]_L)$ and $\beta =\delta \oplus \mbox{\bf =}_{{\cal C}_2}$ for some $\delta \in {\rm Con}_{01}((1^{-M}]_M)$, by the above, hence:$${\rm Con}_{\WCL 01}(A,\cdot ^{\Delta 1^{-L},1^{-M}}))=\{(\gamma \oplus \mbox{\bf =}_{{\cal C}_2})\boxplus (\delta \oplus \mbox{\bf =}_{{\cal C}_2})\ |\ \gamma \in {\rm Con}_{01}((1^{-L}]_L),\delta \in {\rm Con}_{01}((1^{-M}]_M)\}$$$$\cong {\rm Con}_{01}((1^{-L}]_L)\times {\rm Con}_{01}((1^{-M}]_M).$$

Whenever $|L|>3$, so that there exists an element $x\in L\setminus \{0,1^{-L},1\}\subset 1^{-L}/\phi =L\setminus \{0\}$, we have $x^{\Delta 1^{-L},1^{-M}}=1\notin 1^{-M}/\phi $, hence condition $(LM)(\phi )$ fails. If $L\cong {\cal C}_3$, so that $L=\{0,1^{-L},1\}$ and thus $1^{-L}/\phi =\{1^{-L},1\}$, then $1^{\Delta 1^{-L},1^{-M}}=0\in 1^{-M}/\phi $, so condition $(LM)(\phi )$ is satisfied. Similarly, condition $(ML)(\psi )$ is satisfied iff $M\cong {\cal C}_3$.

For every $x\in 1^{-M}/\phi \setminus \{1^{-M}\}=M\setminus \{1^{-M},1\}$, we have $x^{\Delta 1^{-L},1^{-M}}=1\in L\setminus \{0\}=1^{-L}/\phi $, thus condition $(ML)(\phi )$ is satisfied. Similarly, condition $(LM)(\psi )$ is satisfied.

Therefore: $\phi \in {\rm Con}_{\WCL }(A,\cdot ^{\Delta 1^{-L},1^{-M}})$ iff conditions $(LM)(\phi )$ and $(ML)(\phi )$ are satisfied iff condition $(LM)(\phi )$ is satisfied iff $L\cong {\cal C}_3$.

Similarly: $\psi \in {\rm Con}_{\WCL }(A,\cdot ^{\Delta 1^{-L},1^{-M}})$ iff conditions $(LM)(\psi )$ and $(ML)(\psi )$ are satisfied iff condition $(ML)(\psi )$ is satisfied iff $M\cong {\cal C}_3$.

Therefore we have the following cases. Note that, when $L\cong {\cal C}_3$, so that $(1^{-L}]_L\cong {\cal C}_2$, we have: ${\rm Con}_{01}(L)=\{\mbox{\bf =}_L\}\cong {\cal C}_1\cong \{\mbox{\bf =}_{(1^{-L}]_L}\}={\rm Con}_{01}((1^{-L}]_L)={\rm Con}_0((1^{-L}]_L)$, and similarly for $M$.

\noindent (\ref{cghsumnontriv1}) If $A=\{0,1^{-L},1^{-M},1\}\cong {\cal C}_2^2$, then $\phi ,\psi \in {\rm Con}_{\WCL }(A,\cdot ^{\Delta 1^{-L},1^{-M}})$ and ${\rm Con}_{01}(A)=\{\mbox{\bf =}_A\}\cong {\cal C}_1$.

\noindent (\ref{cghsumnontriv2}) If $L=\{0,1^{-L},1\}\cong {\cal C}_3$, but $|M|>3$, then $\phi \in {\rm Con}_{\WCL }(A,\cdot ^{\Delta 1^{-L},1^{-M}})$, but $\psi \notin {\rm Con}_{\WCL }(A,\cdot ^{\Delta 1^{-L},1^{-M}})$.

\noindent (\ref{cghsumnontriv4}) If $|L|>3$ and $|M|>3$, then $\phi ,\psi \notin {\rm Con}_{\WCL }(A,\cdot ^{\Delta 1^{-L},1^{-M}})$.

Hence the forms of the congruence lattices in the enunciation, which yield the subdirect irreducibility results upon noticing that, since $|L|>3$ and $|M|>3$, the bounded lattices $(1^{-L}]_L$ and $(1^{-M}]_M$ are nontrivial.\end{proof}

\begin{corollary} $A=L\boxplus M$ has nontrivial dual weak complementations iff $0$ is strictly meet--irreducible in each of the lattices $L$ and $M$, case in which $A$ has exactly two dual weak complementations, both of which are representable: the trivial dual weak complementation $\cdot ^{\nabla A}$ and $\cdot ^{\nabla 0^{+L},0^{+M}}$, with $\cdot ^{\nabla A}<\cdot ^{\nabla 0^{+L},0^{+M}}$, where $\cdot ^{\nabla 0^{+L},0^{+M}}=\cdot ^{\nabla [0^{+L})_A\cup [0^{+M})_A}$, so that $\cdot ^{\nabla 0^{+L},0^{+M}}$ is defined by: $1^{\nabla 0^{+L},0^{+M}}=0$, $(0^{+L})^{\nabla 0^{+L},0^{+M}}=0^{+M}$, $(0^{+M})^{\Delta 1^{-L},1^{-M}}=0^{+L}$ and $x^{\nabla 0^{+L},0^{+M}}=1$ for all $x\in A\setminus \{0,0^{+L},0^{+M}\}$.

In this case, if we also denote by $\phi =eq(L\setminus \{0\},M\setminus \{1\})$ and $\psi =eq(L\setminus \{1\},M\setminus \{0\})$, then we have:\begin{enumerate}
\item\label{cghsumdualnontriv1} if $A=\{0,0^{+L},0^{+M},1\}\cong {\cal C}_2^2$, or, equivalently, $L=\{0,0^{+L},1\}\cong {\cal C}_3$ and $M=\{0,0^{+M},1\}\cong {\cal C}_3$, then $\cdot ^{\nabla 0^{+L},0^{+M}}$ is the Boolean complementation and:$${\rm Con}_{\WCL }(A,\cdot ^{\nabla 0^{+L},0^{+M}})=\{\mbox{\bf =}_A,\phi ,\psi ,A^2\}\cong {\cal C}_2^2,$$in particular $(A,\cdot ^{\nabla 0^{+L},0^{+M}})$ is not subdirectly irreducible in $\WDCL $;

\item\label{cghsumdualnontriv2} if $L\cong {\cal C}_3$ and $|M|>3$, then:$${\rm Con}_{\WCL }(A,\cdot ^{\nabla 0^{+L},0^{+M}})=\{\mbox{\bf =}_L\boxplus (\delta \oplus \mbox{\bf =}_{{\cal C}_2})\ |\ \delta \in {\rm Con}_{01}([0^{+M})_M)\}\cup \{\psi ,A^2\}\cong {\rm Con}_{01}([0^{+M})_M)\oplus {\cal C}_3,$$in particular $(A,\cdot ^{\nabla 0^{+L},0^{+M}})$ is subdirectly irreducible in $\WDCL $ iff $\mbox{\bf =}_{[0^{+M})_M}\in {\rm Smi}({\rm Con}_{01}([0^{+M})_M)$ iff $([0^{+M})_M,\cdot ^{\Delta [0^{+M})_M},\cdot ^{\nabla [0^{+M})_M})$ is subdirectly irreducible in $\WDL $;

\item\label{cghsumdualnontriv4} if $|L|>3$ and $|M|>3$, then:$${\rm Con}_{\WCL }(A,\cdot ^{\nabla 0^{+L},0^{+M}})=\{(\gamma \oplus \mbox{\bf =}_{{\cal C}_2})\boxplus (\delta \oplus \mbox{\bf =}_{{\cal C}_2})\ |\ \gamma \in {\rm Con}_{01}([0^{+L})_L),\delta \in {\rm Con}_{01}([0^{+M})_M)\}\cup \{A^2\}$$$$\cong ({\rm Con}_{01}([0^{+L})_L)\times {\rm Con}_{01}([0^{+M})_M)\oplus {\cal C}_2,$$in particular $(A,\cdot ^{\nabla 0^{+L},0^{+M}})$ is subdirectly irreducible in $\WCL $ iff ${\rm Con}_{01}([0^{+L})_L)=\{\mbox{\bf =}_{[0^{+L})_L}\}$ and $\mbox{\bf =}_{[0^{+M})_M}\in {\rm Smi}({\rm Con}_{01}([0^{+M})_M))$ or vice--versa iff ${\rm Con}_{01}([0^{+L})_L)=\{\mbox{\bf =}_{[0^{+L})_L}\}$ and $([0^{+M})_M,\cdot ^{\Delta [0^{+M})_M},\cdot ^{\nabla [0^{+M})_M})$ is subdirectly irreducible in $\WDL $ or vice--versa.\end{enumerate}

In the particular case when $A$ is complete and algebraic, $\cdot ^{\nabla 0^{+L},0^{+M}}=\cdot ^{\nabla {\rm Smi}(A)}$.\label{cghsumdualnontriv}\end{corollary}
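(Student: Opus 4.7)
The proof proceeds by dualization of Theorem \ref{mainth} and Proposition \ref{cghsumnontriv}, applied to the dual bounded lattice $A^d = L^d \boxplus M^d$, since the horizontal sum commutes with taking duals. The key correspondences are: the axioms for a dual weak complementation on $A$ coincide with those for a weak complementation on $A^d$, giving a bijection between the two sets of operations; a subset of $A$ is meet-dense in $A$ iff it is join-dense in $A^d$, so representability dualizes; and, since the condition ``the $0$-class is singleton'' on a lattice congruence of $A$ becomes ``the $1$-class is singleton'' on the same congruence viewed on $A^d$, we have ${\rm Con}_{\WDCL}(A, \cdot^{\nabla}) = {\rm Con}_{\WCL}(A^d, \cdot^{\Delta})$ for any pair of corresponding operations.

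Applying Theorem \ref{mainth} to $A^d$, strict join-irreducibility of $1^{A^d}$ in $L^d$ and $M^d$ becomes strict meet-irreducibility of $0$ in $L$ and $M$, yielding the stated ``iff'' for the existence of nontrivial dual weak complementations on $A$. The two weak complementations on $A^d$ furnished by Theorem \ref{mainth} are the trivial $\cdot^{\Delta A^d}$ and $\cdot^{\Delta 1^{-L^d}, 1^{-M^d}}$; since $1^{-L^d} = 0^{+L}$ (and similarly for $M$), these translate to the trivial $\cdot^{\nabla A}$ and to $\cdot^{\nabla 0^{+L}, 0^{+M}}$ on $A$, with the explicit formula and representability as stated. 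The final claim on complete algebraic $A$ follows from the last part of Theorem \ref{mainth} applied to $A^d$, using that $A$ is complete and algebraic iff $A^d$ is complete and dually algebraic, and ${\rm Smi}(A) = {\rm Sji}(A^d)$.

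For items (i)--(iii), apply the corresponding cases of Proposition \ref{cghsumnontriv} to $A^d$. As bounded lattices, $(1^{-L^d}]_{L^d}$ is the dual of $[0^{+L})_L$, so ${\rm Con}_{01}((1^{-L^d}]_{L^d}) = {\rm Con}_{01}([0^{+L})_L)$ (and similarly for $M$); the decompositions $L^d = (1^{-L^d}]_{L^d} \oplus {\cal C}_2$ dualize to $L = {\cal C}_2 \oplus [0^{+L})_L$. Under the duality $0 \leftrightarrow 1$ on $A$, the equivalences $\phi = eq(L \setminus \{0\}, M \setminus \{1\})$ and $\psi = eq(L \setminus \{1\}, M \setminus \{0\})$ interchange, so the extra congruence appearing in the $L \cong {\cal C}_3$ case of Proposition \ref{cghsumnontriv}, namely $\phi$ on $A^d$, becomes $\psi$ on $A$ (and in case (i) the set $\{\mbox{\bf =}_A, \phi, \psi, A^2\}$ is symmetric in the swap). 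The abstract isomorphism types ${\cal C}_2^2$, ${\rm Con}_{01}(\cdot) \oplus {\cal C}_3$, $({\rm Con}_{01}(\cdot) \times {\rm Con}_{01}(\cdot)) \oplus {\cal C}_2$ carry over verbatim, and the subdirect-irreducibility characterizations transfer directly, since they are phrased symmetrically via the $\WDL$ reduct.

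The main technical obstacle is the careful bookkeeping of this duality dictionary: correctly translating between $1^{-L^d}$ and $0^{+L}$, between the interval $(1^{-L^d}]_{L^d}$ and the filter $[0^{+L})_L$, between the two placements of the $\mbox{\bf =}_{{\cal C}_2}$ factor in the ordinal-sum form of a congruence of $L$, and between $\phi$ and $\psi$. Once this dictionary is in place, each item of the corollary follows immediately from the corresponding item of Proposition \ref{cghsumnontriv} applied to $A^d$.
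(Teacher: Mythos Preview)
Your proposal is correct and takes essentially the same approach as the paper: the paper's proof reads in its entirety ``By duality, from Theorem \ref{mainth} and Proposition \ref{cghsumnontriv}'', and your argument is precisely a careful unpacking of that duality, spelling out the dictionary between weak complementations on $A^d$ and dual weak complementations on $A$, the correspondence $1^{-L^d}=0^{+L}$, the identification of $(1^{-L^d}]_{L^d}$ with the dual of $[0^{+L})_L$, and the swap of $\phi$ and $\psi$. Your added detail is helpful and accurate; in particular your observation about the placement of the $\mbox{\bf =}_{{\cal C}_2}$ factor (since $M={\cal C}_2\oplus [0^{+M})_M$ here rather than $(1^{-M}]_M\oplus{\cal C}_2$) is exactly the bookkeeping needed to carry Proposition \ref{cghsumnontriv} over.
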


\begin{proof} By duality, from Theorem \ref{mainth} and Proposition \ref{cghsumnontriv}.\end{proof}

\begin{corollary} $A=L\boxplus M$ has nontrivial weak dicomplementations iff at least one of the following conditions holds:\begin{enumerate}
\item\label{cghsumdinontriv1} $1$ is strictly join--irreducible in both $L$ and $M$;
\item\label{cghsumdinontriv2} $0$ is strictly meet--irreducible in both $L$ and $M$.\end{enumerate}

The weak dicomplementations on $A$ are $(\cdot ^{\Delta },\cdot ^{\nabla })$, with $\cdot ^{\Delta }\in \{\cdot ^{\Delta A},\cdot ^{\Delta 1^{-L},1^{-M}}\}$ if condition (\ref{cghsumdinontriv1}) holds and $\cdot ^{\Delta }=\cdot ^{\Delta A}$ otherwise, and $\cdot ^{\nabla }\in \{\cdot ^{\nabla A},\cdot ^{\nabla 0^{+L},0^{+M}}\}$ if condition (\ref{cghsumdinontriv2}) holds and $\cdot ^{\nabla }=\cdot ^{\nabla A}$ otherwise, all of which are representable.

If $A\cong {\cal C}_2^2$, which is equivalent to $L\cong M\cong {\cal C}_3$ and also to $1^{-L}=0^{+L}$ and $1^{-M}=0^{+M}$, then both $\cdot ^{\Delta 1^{-L},1^{-M}}$ and $\cdot ^{\nabla 0^{+L},0^{+M}}$ equal the Boolean complementation, so we have: ${\rm Con}_{\WDL }(A,\cdot ^{\Delta 1^{-L},1^{-M}},\cdot ^{\nabla 0^{+L},0^{+M}})\cong {\cal C}_2^2$, while ${\rm Con}_{\WDL }(A,\cdot ^{\Delta A},\cdot ^{\nabla A})={\rm Con}_{\WDL }(A,\cdot ^{\Delta 1^{-L},1^{-M}},\cdot ^{\nabla A})={\rm Con}_{\WDL }(A,\cdot ^{\Delta A},\cdot ^{\nabla 0^{+L},0^{+M}})=\{\mbox{\bf =}_A,A^2\}\cong {\cal C}_2$.

Otherwise:\begin{itemize}
\item if condition (\ref{cghsumdinontriv1}) holds, then:$${\rm Con}_{\WDL }(A,\cdot ^{\Delta 1^{-L},1^{-M}},\cdot ^{\nabla A})=\{(\gamma \oplus \mbox{\bf =}_{{\cal C}_2})\boxplus (\delta \oplus \mbox{\bf =}_{{\cal C}_2})\ |\ \gamma \in {\rm Con}_{01}((1^{-L}]_L),\delta \in {\rm Con}_{01}((1^{-M}]_M)\}\cup \{A^2\}$$$$\cong ({\rm Con}_{01}((1^{-L}]_L)\times {\rm Con}_{01}((1^{-M}]_M))\oplus {\cal C}_2;$$
\item if condition (\ref{cghsumdinontriv2}) holds, then:$${\rm Con}_{\WDL }(A,\cdot ^{\Delta A},\cdot ^{\nabla 0^{+L},0^{+M}})=\{(\gamma \oplus \mbox{\bf =}_{{\cal C}_2})\boxplus (\delta \oplus \mbox{\bf =}_{{\cal C}_2})\ |\ \gamma \in {\rm Con}_{01}([0^{+L})_L),\delta \in {\rm Con}_{01}([0^{+M})_M)\}\cup \{A^2\}$$$$\cong ({\rm Con}_{01}([0^{+L})_L)\times {\rm Con}_{01}([0^{+M})_M)\oplus {\cal C}_2;$$
\item if both conditions (\ref{cghsumdinontriv1}) and (\ref{cghsumdinontriv2}) hold, then:$${\rm Con}_{\WDL }(A,\cdot ^{\Delta 1^{-L},1^{-M}},\cdot ^{\nabla 0^{+L},0^{+M}})=\{(\mbox{\bf =}_{{\cal C}_2}\oplus \varepsilon \oplus \mbox{\bf =}_{{\cal C}_2})\boxplus (\mbox{\bf =}_{{\cal C}_2}\oplus \xi \oplus \mbox{\bf =}_{{\cal C}_2})\ |\ \varepsilon \in {\rm Con}_{01}([0^{+L},1^{-L}]_L),$$$$\xi \in {\rm Con}_{01}([0^{+M},1^{-M}]_M)\}\cup \{A^2\}\cong ({\rm Con}_{01}([0^{+L},1^{-L}]_L)\times {\rm Con}_{01}([0^{+M},1^{-M}]_M)\oplus {\cal C}_2.$$\end{itemize}\label{cghsumdinontriv}\end{corollary}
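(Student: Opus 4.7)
The plan is to deduce this corollary by combining Theorem \ref{mainth} with Corollary \ref{cghsumdualnontriv}, together with the identity ${\rm Con}_{\WDL}(A)={\rm Con}_{\WCL}(A)\cap {\rm Con}_{\WDCL}(A)$ recorded in Section \ref{thealg}. A weak dicomplementation on $A$ is by definition a pair $(\cdot^{\Delta},\cdot^{\nabla})$ whose two components can be chosen independently, so the enumeration of the weak dicomplementations on $A$ and the representability of each such pair follow immediately from the two named results; in particular, the pair is nontrivial iff at least one component is nontrivial, which by Theorem \ref{mainth} and Corollary \ref{cghsumdualnontriv} amounts to condition (\ref{cghsumdinontriv1}) or (\ref{cghsumdinontriv2}).

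The case $A\cong {\cal C}_2^2$ I would handle separately: here $1^{-L}=0^{+L}$ and $1^{-M}=0^{+M}$, so each nontrivial component coincides with the Boolean complementation, and the congruence lattices of the four resulting pairs follow directly from Proposition \ref{cghsumnontriv}\,(\ref{cghsumnontriv1}) and its dual together with the observation that each of $\phi,\psi$ lies in ${\rm Con}_{\WCL}(A,\cdot^{\Delta 1^{-L},1^{-M}})\cap {\rm Con}_{\WDCL}(A,\cdot^{\nabla 0^{+L},0^{+M}})$ while lying outside ${\rm Con}_0(A)\cup {\rm Con}_1(A)$. For $A\not\cong {\cal C}_2^2$ with only one condition active, the other component is forced to be trivial, so intersecting the lattices of Proposition \ref{cghsumnontriv} (parts (\ref{cghsumnontriv2}) or (\ref{cghsumnontriv4})) with ${\rm Con}_0(A)\cup\{A^2\}$ (or its dual, ${\rm Con}_1(A)\cup\{A^2\}$) removes $\phi,\psi$, because $|L|,|M|>2$ implies their $0$-class is not a singleton, and leaves exactly the congruences of the announced form, since $\gamma\in{\rm Con}_{01}((1^{-L}]_L)$ already enforces $0/\gamma=\{0\}$.

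The main obstacle is the case in which both (\ref{cghsumdinontriv1}) and (\ref{cghsumdinontriv2}) hold and both components are nontrivial. Here one must intersect ${\rm Con}_{\WCL}(A,\cdot^{\Delta 1^{-L},1^{-M}})$ with ${\rm Con}_{\WDCL}(A,\cdot^{\nabla 0^{+L},0^{+M}})$: the former consists of congruences of shape $(\gamma\oplus\mbox{\bf =}_{{\cal C}_2})\boxplus(\delta\oplus\mbox{\bf =}_{{\cal C}_2})$ with $\gamma\in{\rm Con}_{01}((1^{-L}]_L)$ and analogously for $\delta$, while the latter consists of those of shape $(\mbox{\bf =}_{{\cal C}_2}\oplus\gamma^{\prime})\boxplus(\mbox{\bf =}_{{\cal C}_2}\oplus\delta^{\prime})$ with $\gamma^{\prime}\in{\rm Con}_{01}([0^{+L})_L)$ and similarly for $\delta^{\prime}$. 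Using the decomposition $L={\cal C}_2\oplus [0^{+L},1^{-L}]_L\oplus {\cal C}_2$, which is valid since $1\in{\rm Sji}(L)$ and $0\in {\rm Smi}(L)$ (with the degenerate subcase $0^{+L}=1^{-L}$, i.e.\ $L\cong {\cal C}_3$, handled by reading $[0^{+L},1^{-L}]_L$ as the trivial lattice, and similarly for $M$), the compatibility of the two shapes forces $\gamma=\mbox{\bf =}_{{\cal C}_2}\oplus\varepsilon$ with $\varepsilon\in{\rm Con}_{01}([0^{+L},1^{-L}]_L)$, and analogously $\delta=\mbox{\bf =}_{{\cal C}_2}\oplus\xi$ with $\xi\in{\rm Con}_{01}([0^{+M},1^{-M}]_M)$; adjoining $A^2$ as the top then yields precisely the claimed $({\rm Con}_{01}([0^{+L},1^{-L}]_L)\times{\rm Con}_{01}([0^{+M},1^{-M}]_M))\oplus {\cal C}_2$ description, completing the proof.
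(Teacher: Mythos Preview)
Your proof is correct and follows essentially the same approach as the paper: the corollary is stated without proof there, relying on the identity ${\rm Con}_{\WDL}(A,\cdot^{\Delta},\cdot^{\nabla})={\rm Con}_{\WCL}(A,\cdot^{\Delta})\cap {\rm Con}_{\WDCL}(A,\cdot^{\nabla})$ together with Theorem \ref{mainth}, Proposition \ref{cghsumnontriv}, and Corollary \ref{cghsumdualnontriv}, exactly as you do. Your explicit treatment of how $\phi,\psi$ get removed (via Remark \ref{thelatcg}, which gives ${\rm Con}_0(A)={\rm Con}_1(A)={\rm Con}_{01}(A)$) and of the intersection in the doubly nontrivial case spells out details the paper leaves implicit.
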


\begin{remark} So $A=L\boxplus M$ has at most two weak complementations and at most two dual weak complementations, so either just one, or two, or four weak dicomplementations, in the latter case its weak dicomplementations, being ordered as in the following Hasse diagram:\vspace*{10pt}

\begin{center}\begin{picture}(40,35)(0,0)
\put(20,5){\circle*{3}}
\put(20,5){\line(-1,1){15}}
\put(20,5){\line(1,1){15}}
\put(5,20){\circle*{3}}
\put(35,20){\circle*{3}}
\put(20,35){\circle*{3}}
\put(20,35){\line(-1,-1){15}}
\put(20,35){\line(1,-1){15}}
\put(-28,-5){$(\cdot ^{\Delta 1^{-L},1^{-M}},\cdot ^{\nabla 0^{+L},0^{+M}})$}
\put(-69,17){$(\cdot ^{\Delta A},\cdot ^{\nabla 0^{+L},0^{+M}})$}
\put(37,17){$(\cdot ^{\Delta 1^{-L},1^{-M}},\cdot ^{\nabla 0})$}
\put(0,40){$(\cdot ^{\Delta A},\cdot ^{\nabla A})$}
\end{picture}\end{center}

We have, of course, ${\rm Con}_{\WDL }(A,\cdot ^{\Delta },\cdot ^{\nabla })={\rm Con}_{\WCL }(A,\cdot ^{\Delta })\cap {\rm Con}_{\WDCL }(A,\cdot ^{\nabla })$ for each $(\cdot ^{\Delta },\cdot ^{\nabla })$ as in Corollary \ref{cghsumdinontriv}, and immediate subdirect irreducibility characterizations follow.\end{remark}

\begin{example} Let $L={\cal C}_2\oplus {\cal C}_2^2\oplus {\cal C}_2$ and $M={\cal C}_2^2\oplus {\cal C}_2$, and let the elements of $A=L\boxplus M$ be denoted as in the following Hasse diagram:

\begin{center}\begin{tabular}{cc}\begin{picture}(40,85)(0,0)
\put(-65,77){$A=L\oplus M$:}
\put(20,0){\circle*{3}}
\put(20,0){\line(-1,1){20}}
\put(0,20){\line(-1,1){20}}
\put(0,20){\line(1,1){20}}
\put(0,60){\line(-1,-1){20}}
\put(0,60){\line(1,-1){20}}
\put(20,80){\line(-1,-1){20}}
\put(20,80){\circle*{3}}
\put(0,20){\circle*{3}}
\put(20,20){\circle*{3}}
\put(-20,40){\circle*{3}}
\put(20,40){\circle*{3}}
\put(0,60){\circle*{3}}
\put(60,40){\circle*{3}}
\put(60,20){\circle*{3}}
\put(20,80){\line(1,-1){40}}
\put(20,0){\line(2,1){40}}
\put(20,20){\line(2,1){40}}
\put(20,0){\line(0,1){20}}
\put(60,40){\line(0,-1){20}}
\put(18,-9){$0$}
\put(18,83){$1$}
\put(-3,12){$a$}
\put(-3,63){$d$}
\put(-27,37){$b$}
\put(23,37){$c$}
\put(13,18){$s$}
\put(63,17){$t$}
\put(63,38){$u$}
\end{picture}
&\hspace{85pt}
\begin{picture}(40,85)(0,0)
\put(20,0){\circle*{3}}
\put(0,20){\circle*{3}}
\put(40,20){\circle*{3}}
\put(20,40){\circle*{3}}
\put(20,70){\circle*{3}}
\put(20,0){\line(1,1){20}}
\put(20,0){\line(-1,1){20}}
\put(20,40){\line(-1,-1){20}}
\put(20,40){\line(1,-1){20}}
\put(20,40){\line(0,1){30}}
\put(17,73){$A^2$}
\put(22,43){$\{a,b,c,d\}$}
\put(42,23){$\{a,b\},$}
\put(42,11){$\{c,d\}$}
\put(-27,23){$\{a,c\},$}
\put(-25,11){$\{b,d\}$}
\put(16,-8){$\mbox{\bf =}_A$}
\end{picture}\end{tabular}\end{center}

Then, by Theorem \ref{mainth} and Corollary \ref{cghsumdualnontriv}, $A$ has the weak complementations $\cdot ^{\Delta A}$ and $\cdot ^{\Delta d,u}$ and only the trivial dual weak complementation $\cdot ^{\nabla A}$.

We have, by Remark \ref{cghsumtriv}: ${\rm Con}_{\WDL }(A,\cdot ^{\Delta A},\cdot ^{\nabla A})={\rm Con}_{\WDCL }(A,\cdot ^{\Delta A})={\rm Con}_{\WDCL }(A,\cdot ^{\nabla A})={\rm Con}_{01}(A)\cup \{A^2\}=\{(\mbox{\bf =}_{{\cal C}_2}\oplus \zeta \oplus \mbox{\bf =}_{{\cal C}_2})\boxplus \mbox{\bf =}_M\ |\ \zeta \in {\rm Con}({\cal C}_2^2)\}\cup \{A^2\}\cong {\rm Con}({\cal C}_2^2)\oplus {\cal C}_2\cong {\cal C}_2^2\oplus {\cal C}_2$, which has the lattice structure represented in the rightmost diagram above, in which the proper nontrivial congruences are indicated by their nonsingleton classes.

By Proposition \ref{cghsumnontriv}, (\ref{cghsumnontriv4}), we have: ${\rm Con}_{\WCL }(A,\cdot ^{\Delta d,u})=\{\mbox{\bf =}_A,A^2\}\cong {\cal C}_2$.

Consequently, the weak dicomplementations on $A$ are $(\cdot ^{\Delta A},\cdot ^{\nabla A})$ and $(\cdot ^{\Delta d,u},\cdot ^{\nabla A})$, and we have:\linebreak ${\rm Con}_{\WCL }(A,\cdot ^{\Delta },\cdot ^{\nabla A})={\rm Con}_{\WCL }(A,\cdot ^{\Delta })$ for every $\cdot ^{\Delta }\in \{\cdot ^{\Delta A},\cdot ^{\Delta d,u}\}$.

Now let us consider the five--element non--modular lattice ${\cal N}_5={\cal C}_3\boxplus {\cal C}_4$, with the elements denoted as in the following Hasse diagram. Then, by Theorem \ref{mainth} and Corollary \ref{cghsumdualnontriv}, as in the case of the four--element Boolean algebra ${\cal C}_2^2={\cal C}_3\boxplus {\cal C}_3$, ${\cal N}_5$ has four weak dicomplementations: $\{(\cdot ^{\Delta },\cdot ^{\nabla })\ |\ \cdot ^{\Delta }\in \{\cdot ^{\Delta {\cal N}_5},\cdot ^{\Delta a,c}\},\cdot ^{\nabla }\in \{\cdot ^{\nabla {\cal N}_5},\cdot ^{\nabla a,b}\}\}$, where $L={\cal C}_3$ and $M={\cal C}_4$.\vspace*{5pt}

\begin{center}\begin{tabular}{cccc}
\begin{picture}(30,50)(0,0)
\put(-20,45){${\cal N}_5$:}
\put(15,0){\circle*{3}}
\put(30,15){\circle*{3}}
\put(15,0){\line(1,1){15}}
\put(15,0){\line(-1,1){25}}
\put(30,15){\line(0,1){20}}
\put(15,50){\circle*{3}}
\put(30,35){\circle*{3}}
\put(-10,25){\circle*{3}}
\put(15,50){\line(1,-1){15}}
\put(15,50){\line(-1,-1){25}}
\put(13,-9){$0$}
\put(13,53){$1$}
\put(32,11){$b$}
\put(33,32){$c$}
\put(-17,23){$a$}
\end{picture}
&\hspace*{20pt}
\begin{picture}(30,50)(0,0)
\put(15,0){\circle*{3}}
\put(15,20){\circle*{3}}
\put(15,40){\circle*{3}}
\put(15,0){\line(0,1){40}}
\put(11,-8){$\mbox{\bf =}_{{\cal N}_5}$}
\put(12,43){${\cal N}_5^2$}
\put(18,18){$\{b,c\}$}
\end{picture}
&\hspace*{20pt}
\begin{picture}(30,50)(0,0)
\put(15,0){\circle*{3}}
\put(15,20){\circle*{3}}
\put(15,40){\circle*{3}}
\put(15,0){\line(0,1){40}}
\put(11,-8){$\mbox{\bf =}_{{\cal N}_5}$}
\put(12,43){${\cal N}_5^2$}
\put(17,23){$\{a,1\},$}
\put(17,11){$\{0,b,c\}$}
\end{picture}
&\hspace*{20pt}
\begin{picture}(30,50)(0,0)
\put(15,0){\circle*{3}}
\put(15,20){\circle*{3}}
\put(15,40){\circle*{3}}
\put(15,0){\line(0,1){40}}
\put(11,-8){$\mbox{\bf =}_{{\cal N}_5}$}
\put(12,43){${\cal N}_5^2$}
\put(17,23){$\{0,a\},$}
\put(17,11){$\{b,c,1\}$}
\end{picture}\end{tabular}\end{center}\vspace*{3pt}

By Remark \ref{cghsumtriv}, ${\rm Con}_{\WDL }({\cal N}_5,\cdot ^{\Delta {\cal N}_5},\cdot ^{\nabla {\cal N}_5})={\rm Con}_{\WCL }({\cal N}_5,\cdot ^{\Delta {\cal N}_5})={\rm Con}_{\WDCL }({\cal N}_5,\cdot ^{\nabla {\cal N}_5})={\rm Con}_{01}({\cal N}_5)\cup \{{\cal N}_5^2\}=\{\mbox{\bf =}_{{\cal C}_3}\boxplus \mbox{\bf =}_{{\cal C}_4},\mbox{\bf =}_{{\cal C}_3}\boxplus (\mbox{\bf =}_{{\cal C}_2}\oplus {\cal C}_2^2\oplus \mbox{\bf =}_{{\cal C}_2})\}\cup \{{\cal N}_5^2\}=\{\mbox{\bf =}_{{\cal N}_5},\mbox{\bf =}_{{\cal C}_3}\boxplus (\mbox{\bf =}_{{\cal C}_2}\oplus {\cal C}_2^2\oplus \mbox{\bf =}_{{\cal C}_2}),{\cal N}_5^2\}\cong {\cal C}_3$, represented in the second diagram above, since ${\cal C}_2\cong {\rm Con}_{01}({\cal N}_5)=\{\mbox{\bf =}_{{\cal N}_5},\mbox{\bf =}_{{\cal C}_3}\boxplus (\mbox{\bf =}_{{\cal C}_2}\oplus {\cal C}_2^2\oplus \mbox{\bf =}_{{\cal C}_2})\}=\{\mbox{\bf =}_{{\cal N}_5},eq(\{0\},\{a\},\{b,c\},\{1\})\}={\rm Con}_{\WDL 01}({\cal N}_5,\cdot ^{\Delta {\cal N}_5},\cdot ^{\nabla {\cal N}_5})={\rm Con}_{\WCL 01}({\cal N}_5,\cdot ^{\Delta {\cal N}_5})={\rm Con}_{\WDCL 01}({\cal N}_5,\cdot ^{\nabla {\cal N}_5})={\rm Con}_{\WDL 0}({\cal N}_5,\cdot ^{\Delta {\cal N}_5},\cdot ^{\nabla {\cal N}_5})=\linebreak {\rm Con}_{\WCL 0}({\cal N}_5,\cdot ^{\Delta {\cal N}_5})={\rm Con}_{\WDCL 0}({\cal N}_5,\cdot ^{\nabla {\cal N}_5})={\rm Con}_{\WDL 1}({\cal N}_5,\cdot ^{\Delta {\cal N}_5},\cdot ^{\nabla {\cal N}_5})={\rm Con}_{\WCL 1}({\cal N}_5,\cdot ^{\Delta {\cal N}_5})={\rm Con}_{\WDCL 1}({\cal N}_5,\linebreak \cdot ^{\nabla {\cal N}_5})$.

By Proposition \ref{cghsumnontriv}, ${\rm Con}_{\WCL }({\cal N}_5,\cdot ^{\Delta a,c})=\{\mbox{\bf =}_{{\cal C}_3}\boxplus \mbox{\bf =}_{{\cal C}_4},eq(\{0,b,c\},\{a,1\}),{\cal N}_5^2\}=\{\mbox{\bf =}_{{\cal N}_5},eq(\{0,b,c\},\{a,1\}),\linebreak {\cal N}_5^2\}\cong {\cal C}_3$, represented in the third diagram above, so ${\rm Con}_{\WCL 01}({\cal N}_5,\cdot ^{\Delta a,c})={\rm Con}_{\WCL 0}({\cal N}_5,\cdot ^{\Delta a,c})={\rm Con}_{\WCL 1}({\cal N}_5,\linebreak \cdot ^{\Delta a,c})=\{\mbox{\bf =}_{{\cal N}_5}\}\cong {\cal C}_1$.

By Corollary \ref{cghsumdualnontriv}, ${\rm Con}_{\WDCL }({\cal N}_5,\cdot ^{\nabla a,b})=\{\mbox{\bf =}_{{\cal C}_3}\boxplus \mbox{\bf =}_{{\cal C}_4},eq(\{0,a\},\{b,c,1\}),{\cal N}_5^2\}=\{\mbox{\bf =}_{{\cal N}_5},eq(\{0,a\},\{b,c,1\}),\linebreak {\cal N}_5^2\}\cong {\cal C}_3$, represented in the fourth diagram above, so ${\rm Con}_{\WDCL 01}({\cal N}_5,\cdot ^{\nabla a,b})={\rm Con}_{\WDCL 0}({\cal N}_5,\cdot ^{\nabla a,b})=\linebreak {\rm Con}_{\WDCL 1}({\cal N}_5,\cdot ^{\nabla a,b})=\{\mbox{\bf =}_{{\cal N}_5}\}\cong {\cal C}_1$.

Consequently: ${\rm Con}_{\WDL }({\cal N}_5,\cdot ^{\Delta {\cal N}_5},\cdot ^{\nabla a,b})={\rm Con}_{\WDL }({\cal N}_5,\cdot ^{\Delta a,c},\cdot ^{\nabla {\cal N}_5})={\rm Con}_{\WDL }({\cal N}_5,\cdot ^{\Delta a,c},\cdot ^{\nabla a,b})=\{\mbox{\bf =}_{{\cal N}_5},{\cal N}_5^2\}$\linebreak $\cong {\cal C}_2$, and, w.r.t. any of these nontrivial weak dicomplementations $(\cdot ^{\Delta },\cdot ^{\nabla })$, ${\rm Con}_{\WDL 01}({\cal N}_5,\cdot ^{\Delta },\cdot ^{\nabla })={\rm Con}_{\WDL 0}({\cal N}_5,\linebreak \cdot ^{\Delta },\cdot ^{\nabla })={\rm Con}_{\WDL 1}({\cal N}_5,\cdot ^{\Delta },\cdot ^{\nabla })=\{\mbox{\bf =}_{{\cal N}_5}\}\cong {\cal C}_1$.

More generally, if $r,s\in \N $ such that $r\geq 3$ and $s\geq 4$, so that $|{\cal C}_r\boxplus {\cal C}_s|=r+s-2$, then we may notice that ${\rm Con}({\cal C}_r\boxplus {\cal C}_s)\cong {\cal C}_2^{r+s-6}\oplus {\cal C}_2^2$ and ${\rm Con}_{01}({\cal C}_r\boxplus {\cal C}_s)={\rm Con}_0({\cal C}_r\boxplus {\cal C}_s)={\rm Con}_1({\cal C}_r\boxplus {\cal C}_s)\cong {\cal C}_2^{r+s-6}$ (see also \cite{eunoucard,eucfifin}), hence ${\rm Con}_{\WDL }({\cal C}_r\boxplus {\cal C}_s,\cdot ^{\Delta {\cal C}_r\boxplus {\cal C}_s},\cdot ^{\nabla {\cal C}_r\boxplus {\cal C}_s})={\rm Con}_{\WCL }({\cal C}_r\boxplus {\cal C}_s,\cdot ^{\Delta {\cal C}_r\boxplus {\cal C}_s})={\rm Con}_{\WDCL }({\cal C}_r\boxplus {\cal C}_s,\cdot ^{\nabla {\cal C}_r\boxplus {\cal C}_s})={\rm Con}_{01}({\cal C}_r\boxplus {\cal C}_s)\cup \{({\cal C}_r\boxplus {\cal C}_s)^2\}\cong {\cal C}_2^{r+s-6}\oplus {\cal C}_2$.

If ${\rm At}({\cal C}_r\boxplus {\cal C}_s)=\{a,b\}$ and ${\rm CoAt}({\cal C}_r\boxplus {\cal C}_s)=\{c,d\}$, then:

\noindent ${\rm Con}_{\WCL }({\cal C}_r\boxplus {\cal C}_s,\cdot ^{\Delta c,d})\cong {\rm Con}_{\WDCL }({\cal C}_r\boxplus {\cal C}_s,\cdot ^{\nabla a,b})\cong \begin{cases}({\rm Con}_{01}({\cal C}_{r-1})\times {\rm Con}_{01}({\cal C}_{s-1}))\oplus {\cal C}_3\cong {\cal C}_2^{s-4}\oplus {\cal C}_3,& r=3,\\ ({\rm Con}_{01}({\cal C}_{r-1})\times {\rm Con}_{01}({\cal C}_{s-1}))\oplus {\cal C}_2\cong {\cal C}_2^{r+s-8}\oplus {\cal C}_2,& r\geq 4;\end{cases}$

\noindent ${\rm Con}_{\WDL }({\cal C}_r\boxplus {\cal C}_s,\cdot ^{\Delta c,d},\cdot ^{\nabla {\cal C}_r\boxplus {\cal C}_s})\cong {\rm Con}_{\WDL }({\cal C}_r\boxplus {\cal C}_s,\cdot ^{\Delta {\cal C}_r\boxplus {\cal C}_s},\cdot ^{\nabla a,b})\cong ({\rm Con}_{01}({\cal C}_{r-1})\times {\rm Con}_{01}({\cal C}_{s-1}))\oplus {\cal C}_2\cong \begin{cases}{\cal C}_2^{s-4}\oplus {\cal C}_2,& r=3,\\ {\cal C}_2^{r+s-8}\oplus {\cal C}_2,& r\geq 4;\end{cases}$

\noindent ${\rm Con}_{\WDL }({\cal C}_r\boxplus {\cal C}_s,\cdot ^{\Delta c,d},\cdot ^{\nabla a,b})\cong ({\rm Con}_{01}({\cal C}_{r-2})\times {\rm Con}_{01}({\cal C}_{s-2}))\oplus {\cal C}_2\cong \begin{cases}{\cal C}_2,& r\leq 4,s\leq 4;\\ {\cal C}_2^{r-5}\oplus {\cal C}_2,& r\geq 5,s\leq 4;\\ {\cal C}_2^{s-5}\oplus {\cal C}_2,& r\leq 4,s\geq 5;\\ {\cal C}_2^{r+s-10}\oplus {\cal C}_2,& r\geq 5,s\geq 5.\end{cases}$

As for the congruences with the classes of $0$ and/or $1$ singletons, we obtain:

\noindent ${\rm Con}_{\WCL 01}({\cal C}_r\boxplus {\cal C}_s,\cdot ^{\Delta c,d})={\rm Con}_{\WCL 0}({\cal C}_r\boxplus {\cal C}_s,\cdot ^{\Delta c,d})={\rm Con}_{\WDCL 1}({\cal C}_r\boxplus {\cal C}_s,\cdot ^{\Delta c,d})\cong {\rm Con}_{\WDCL 01}({\cal C}_r\boxplus {\cal C}_s,\cdot ^{\nabla a,b})={\rm Con}_{\WDCL 0}({\cal C}_r\boxplus {\cal C}_s,\cdot ^{\nabla a,b})={\rm Con}_{\WDCL 1}({\cal C}_r\boxplus {\cal C}_s,\cdot ^{\nabla a,b})\cong {\rm Con}_{\WDL 01}({\cal C}_r\boxplus {\cal C}_s,\cdot ^{\Delta c,d},\cdot ^{\nabla {\cal C}_r\boxplus {\cal C}_s})={\rm Con}_{\WDL 0}({\cal C}_r\boxplus {\cal C}_s,\cdot ^{\Delta c,d},\cdot ^{\nabla {\cal C}_r\boxplus {\cal C}_s})={\rm Con}_{\WDL 1}({\cal C}_r\boxplus {\cal C}_s,\cdot ^{\Delta c,d},\cdot ^{\nabla {\cal C}_r\boxplus {\cal C}_s})\cong {\rm Con}_{\WDL 01}({\cal C}_r\boxplus {\cal C}_s,\cdot ^{\Delta {\cal C}_r\boxplus {\cal C}_s},\cdot ^{\nabla a,b})={\rm Con}_{\WDL 0}({\cal C}_r\boxplus {\cal C}_s,\cdot ^{\Delta {\cal C}_r\boxplus {\cal C}_s},\cdot ^{\nabla a,b})={\rm Con}_{\WDL 1}({\cal C}_r\boxplus {\cal C}_s,\cdot ^{\Delta {\cal C}_r\boxplus {\cal C}_s},\cdot ^{\nabla a,b})\cong {\rm Con}_{01}({\cal C}_{r-1})\times {\rm Con}_{01}({\cal C}_{s-1})\cong \begin{cases}{\cal C}_2^{s-4},& r=3,\\ {\cal C}_2^{r+s-8},& r\geq 4;\end{cases}$

\noindent ${\rm Con}_{\WDL 01}({\cal C}_r\boxplus {\cal C}_s,\cdot ^{\Delta c,d},\cdot ^{\nabla a,b})={\rm Con}_{\WDL 0}({\cal C}_r\boxplus {\cal C}_s,\cdot ^{\Delta c,d},\cdot ^{\nabla a,b})={\rm Con}_{\WDL 1}({\cal C}_r\boxplus {\cal C}_s,\cdot ^{\Delta c,d},\cdot ^{\nabla a,b})\cong {\rm Con}_{01}({\cal C}_{r-2})\times {\rm Con}_{01}({\cal C}_{s-2})\cong \begin{cases}{\cal C}_1,& r\leq 4,s\leq 4;\\ {\cal C}_2^{r-5},& r\geq 5,s\leq 4;\\ {\cal C}_2^{s-5},& r\leq 4,s\geq 5;\\ {\cal C}_2^{r+s-10},& r\geq 5,s\geq 5.\end{cases}$\label{exhsums}\end{example}

\section{An Application: Determining the Largest Numbers of Congruences of Finite (Dual) Weakly Complemented Lattices}

\begin{lemma}{\rm \cite{eucgbiklaol}} If $A$ is a congruence--distributive algebra and $\alpha $ is an atom of the lattice of congruences of $A$, then $A$ has at most twice as many congruences as $A/\alpha $.\label{atcg}\end{lemma}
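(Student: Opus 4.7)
The plan is to use the Correspondence Theorem (which, in the congruence-distributive setting, gives a lattice isomorphism between ${\rm Con}(A/\alpha )$ and the principal filter $[\alpha ,A^2]$ of ${\rm Con}(A)$) and then exhibit a map $f:{\rm Con}(A)\rightarrow [\alpha ,A^2]$ whose every fibre has size at most two. The natural candidate is $f(\theta )=\theta \vee \alpha $. Since $|{\rm Con}(A)|=\sum _{\beta \in [\alpha ,A^2]}|f^{-1}(\beta )|$, once fibres are shown to have at most two elements we immediately conclude $|{\rm Con}(A)|\leq 2\,|[\alpha ,A^2]|=2\,|{\rm Con}(A/\alpha )|$.

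To bound the fibre of an arbitrary $\beta \in [\alpha ,A^2]$, I would split according to the value of $\theta \wedge \alpha $. Because $\alpha $ is an atom of ${\rm Con}(A)$, this meet must equal either $\mbox{\bf =}_A$ or $\alpha $. In the case $\theta \wedge \alpha =\alpha $, one has $\alpha \subseteq \theta $, so $\theta =\theta \vee \alpha =\beta $; this contributes exactly one element to $f^{-1}(\beta )$ (namely $\beta $ itself). In the remaining case $\theta \wedge \alpha =\mbox{\bf =}_A$, I would invoke the standard cancellation principle valid in any distributive lattice: an element is uniquely determined by its meet and its join with any fixed element.

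The only piece of content that really needs to be checked is that cancellation principle, which is a short computation using distributivity twice. If $\theta _1,\theta _2\in f^{-1}(\beta )$ both satisfy $\theta _i\wedge \alpha =\mbox{\bf =}_A$, then $\theta _1=\theta _1\wedge (\theta _1\vee \alpha )=\theta _1\wedge (\theta _2\vee \alpha )=(\theta _1\wedge \theta _2)\vee (\theta _1\wedge \alpha )=(\theta _1\wedge \theta _2)\vee (\theta _2\wedge \alpha )=\theta _2\wedge (\theta _1\vee \alpha )=\theta _2\wedge (\theta _2\vee \alpha )=\theta _2$. Congruence-distributivity of $A$ is exactly what is needed here, and the atomicity of $\alpha $ is precisely what produces the clean dichotomy $\theta \wedge \alpha \in \{\mbox{\bf =}_A,\alpha \}$ on which the fibre analysis rests.

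I do not foresee any real obstacle: the argument is a one-paragraph reduction to a textbook fact about distributive lattices, and the two hypotheses of the lemma (congruence-distributivity, atomicity of $\alpha $) are each used once, in the two places just indicated. The bound $2$ is best possible in general, as it is attained whenever both $\mbox{\bf =}_A$ and some $\theta \neq \mbox{\bf =}_A$ with $\theta \wedge \alpha =\mbox{\bf =}_A$ map to the same $\beta $.
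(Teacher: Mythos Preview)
Your proof is correct. The paper does not supply its own proof of this lemma; it simply cites the result from \cite{eucgbiklaol} and uses it as a black box in the inductive arguments of Theorem~\ref{cgwclfin}. Your argument---mapping ${\rm Con}(A)$ onto the interval $[\alpha ,A^2]\cong {\rm Con}(A/\alpha )$ via $\theta \mapsto \theta \vee \alpha $ and bounding each fibre by two using the atom dichotomy $\theta \wedge \alpha \in \{\mbox{\bf =}_A,\alpha \}$ together with the distributive cancellation law---is exactly the standard proof one would expect, and the computation you give for cancellation is clean and correct.
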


Let $A$ be a member of a variety $\V $ of lattice--ordered algebras. Then, for any $S\subseteq A$, we have $S\subseteq Cg_A(S)\subseteq Cg_{\V ,A}(S)$, hence $Cg_{\V ,A}(S)=Cg_{\V ,A}(Cg_A(S))$.

Clearly, any atom of a congruence lattice is a principal congruence. By the convexity of the congruence classes of lattice congruences, any principal congruence of a lattice and thus any principal congruence of $A$ is generated by a pair of elements $a,b$ with $a\leq b$. Thus, clearly, if $A$ is finite, then any atom of the lattice of congruences of $A$ is generated by a pair of elements $a,b\in A$ with $a\prec b$.

\begin{remark} Let $(L,\cdot ^{\Delta })$ be a weakly complemented lattice.

By the above, if $L$ is finite and $\alpha \in {\rm At}({\rm Con}_{\WCL }(L))$, then, for some $a,b\in L$ with $a\prec b$, we have $\alpha =Cg_{\WCL ,L}(a,b)=Cg_{\WCL ,L}(Cg_L(a,b))$.

Let us also note that, for any $a\in L$, no proper congruence $\theta $ of $(L,\cdot ^{\Delta })$ can have $a$ and $a^{\Delta }$ in the same class, because then we would have $a/\theta =a^{\Delta }/\theta =(a\vee a^{\Delta })/\theta =1/\theta $, hence $1/\theta =a^{\Delta }/\theta =1^{\Delta }/\theta =0/\theta $.\label{onWCLcg}\end{remark}

For any set $M$ and any nonempty subset $S\subseteq M$, let us denote by $\varepsilon _M(S)$ the equivalence on $M$ having $S$ as a class and all other classes singletons: $\varepsilon _M(S)=eq(\{S\}\cup \{\{x\}:x\in M\setminus S\})$. Clearly, for any nonempty family $(S_i)_{i\in I}$ of parwise disjoint nonempty subsets of $M$, we have, in the lattice ${\rm Eq}(M)$: $\displaystyle \bigvee _{i\in I}\varepsilon _M(S_i)=eq(\{S_i:i\in I\}\cup \{\{x\}:x\in M\setminus \bigcup _{i\in I}S_i\})=\bigcup _{i\in I}\varepsilon _M(S_i)$. For brevity, if $a_1,\ldots ,a_k\in M$ for some $k\in \N ^*$, then we denote by $\varepsilon _M(a_1,\ldots ,a_k)=\varepsilon _M(\{a_1,\ldots ,a_k\})$.

For statement (\ref{pcg3}) of the following lemma, see \cite[Lemma $3.3$ and Remark $3.4$]{eucfifin}.

\begin{lemma}{\rm \cite{gcze,gratzer,grafin,eucfifin}} If $L$ is a finite lattice and $a,b\in L$ are such that $a\prec b$:\begin{enumerate}
\item\label{pcg1} $|L/Cg_L(a,b)|=|L|-1$ iff $a\in {\rm Mi}(L)$ and $b\in {\rm Ji}(L)$ iff $Cg_L(a,b)=\varepsilon _L(a,b)$;
\item\label{pcg2} $|L/Cg_L(a,b)|=|L|-2$ iff the following or its dual (by dual meaning the case when $b$ has a predecessor $c$ such that in $L^d$ the following hold for the interval $[a\wedge c,b]_L$ instead of $[a,b\vee c]_L$) holds: $a\notin {\rm Mi}(L)$ and, for some $c\in L$, $a\prec c$, $b\prec b\vee c$, $c\prec b\vee c$, $[a,b\vee c]_L=\{a,b,c,b\vee c\}\cong {\cal C}_2^2$ and $Cg_L(a,b)=\varepsilon _L(a,b)\cup \varepsilon _L(c,b\vee c)$;
\item\label{pcg3} $|L/Cg_L(a,b)|=|L|-3$ iff the following or its dual (with the dual of the following being formulated as above) holds: $a\notin {\rm Mi}(L)$ so, for some $c\in L\setminus \{b\}$, we have $a\prec c$, and we are in one of the following situations, depicted in the following diagrams:

\textcircled{1}$\ $ $b\prec b\vee c$, $c\prec b\vee c$, $[a,b\vee c]_L=\{a,b,c,b\vee c\}\cong {\cal C}_2^2$ and $Cg_L(a,b)=\varepsilon _L([a,b\vee c]_L)$;

\textcircled{2}$\ $ $c\prec b\vee c$ and, for some $d\in L$, $b\prec d\prec b\vee c$, $[a,b\vee c]_L=\{a,b,c,d,b\vee c\}\cong {\cal N}_5$ and $Cg_L(a,b)=\varepsilon _L(a,b,d)\cup \varepsilon _L(c,b\vee c)$;

\textcircled{3}$\ $ $b\prec b\vee c$ and, for some $d\in L$, $c\prec d\prec b\vee c$, $[a,b\vee c]_L=\{a,b,c,d,b\vee c\}\cong {\cal N}_5$ and $Cg_L(a,b)=\varepsilon _L(a,b)\cup \varepsilon _L(c,d,b\vee c)$;

\textcircled{4}$\ $ $b\prec b\vee c$, $c\prec b\vee c$, $[a,b\vee c]_L=\{a,b,c,b\vee c\}\cong {\cal C}_2^2$ and, for some $d,e\in L\setminus \{a,b,c,b\vee c\}$ such that $d\prec e$, $Cg_L(a,b)=\varepsilon _L(a,b)\cup \varepsilon _L(c,b\vee c)\cup \varepsilon _L(d,e)$.\end{enumerate}\label{pcg}\end{lemma}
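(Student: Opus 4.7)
The overall strategy is to exploit the classical fact that, in a finite lattice $L$, the principal congruence $Cg_L(a,b)$ generated by a covering pair $a\prec b$ is the transitive closure of the ``prime perspectivity'' relation on covering pairs generated by $(a,b)$; each class of $Cg_L(a,b)$ is thus a union of intervals glued together along perspective covers, and $|L/Cg_L(a,b)|$ equals $|L|$ minus the total number of collapsed pairs (summed over classes). So statements (\ref{pcg1})--(\ref{pcg3}) amount to classifying the sublattice configurations that produce exactly one, two, or three cover collapses.

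For (\ref{pcg1}), I would first note that the equivalence $\varepsilon _L(a,b)$ is a lattice congruence iff for every $c\in L$ both pairs $(a\vee c,b\vee c)$ and $(a\wedge c,b\wedge c)$ lie in $\mbox{\bf =}_L\cup \varepsilon _L(a,b)$; in the cover setting this reduces to: $a$ has no upper cover in $L$ other than $b$ and $b$ has no lower cover in $L$ other than $a$, i.e.\ $a\in {\rm Mi}(L)$ and $b\in {\rm Ji}(L)$. Conversely, if $Cg_L(a,b)$ has exactly one nontrivial class, that class must be $\{a,b\}$ (it is generated by the pair $(a,b)$ via the transitive closure), so $\varepsilon _L(a,b)$ itself must be a congruence, yielding the same conditions. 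This gives the equivalence of the three formulations.

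For (\ref{pcg2}) one uses that any additional collapse is forced by a prime perspective to $(a,b)$. If $a\notin {\rm Mi}(L)$, pick $c\succ a$ with $c\neq b$; then the pair $(c,b\vee c)$ is collapsed together with $(a,b)$. For this to contribute \emph{exactly} one extra identification, no further perspectivity may propagate, which forces (on one hand) $[a,b\vee c]_L=\{a,b,c,b\vee c\}\cong {\cal C}_2^2$, since any intermediate element in this interval would generate a fresh collapse, and (on the other hand) $c$ to be meet--irreducible and $b\vee c$ to be join--irreducible outside this diamond. These constraints are exactly what the formula $Cg_L(a,b)=\varepsilon _L(a,b)\cup \varepsilon _L(c,b\vee c)$ records. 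The dual argument handles the case $b\notin {\rm Ji}(L)$.

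For (\ref{pcg3}), the same machinery is run one step further. Either (\textcircled{1}) an extra internal perspectivity inside the diamond of (\ref{pcg2}) collapses all four of its elements into one class; or (\textcircled{2}), (\textcircled{3}) the interval $[a,b\vee c]_L$ is an ${\cal N}_5$ in one of its two mirror orientations, with the three--element chain collapsed into one class and the two remaining elements into a second class; or (\textcircled{4}) the diamond of (\ref{pcg2}) contributes only its two collapses and an additional, disjoint covering pair $(d,e)$ of $L$ is prime perspective to $(a,b)$ through a chain of covering diamonds situated outside $[a,b\vee c]_L$. The main obstacle, and the most combinatorially delicate step, is to verify that these four shapes (together with their duals, obtained by interchanging the roles of $a$ and $b$) exhaust all configurations producing exactly three collapses: one must show that any additional prime perspectivity either fits inside one of the enlarged sublattices listed above or else contributes a fourth collapse. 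This detailed case enumeration is the content of \cite[Lemma~$3.3$ and Remark~$3.4$]{eucfifin}, on which we rely.
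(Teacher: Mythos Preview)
The paper does not actually prove this lemma: it is stated with the citations \cite{gcze,gratzer,grafin,eucfifin} and, for part (\ref{pcg3}), the reader is explicitly referred to \cite[Lemma~$3.3$ and Remark~$3.4$]{eucfifin}; no proof environment follows the statement. Your proposal is therefore not competing with any argument in the paper but rather sketching what the cited references contain, and it does so accurately: the perspectivity propagation description of $Cg_L(a,b)$ for a covering pair, the meet/join--irreducibility criterion for (\ref{pcg1}), the single diamond for (\ref{pcg2}), and the four configurations for (\ref{pcg3}), with the exhaustiveness of the latter deferred to \cite{eucfifin} exactly as the paper itself does.
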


\begin{center}\begin{tabular}{cccc}\begin{picture}(40,45)(0,0)
\put(-15,47){\textcircled{1}}
\put(20,0){\circle*{3}}
\put(20,40){\circle*{3}}
\put(0,20){\circle*{3}}
\put(40,20){\circle*{3}}
\put(20,0){\line(-1,1){20}}
\put(20,0){\line(1,1){20}}
\put(20,40){\line(-1,-1){20}}
\put(20,40){\line(1,-1){20}}
\put(18,-7){$a$}
\put(10,43){$b\vee c$}
\put(-6,16){$b$}
\put(42,17){$c$}
\end{picture}
&\hspace*{25pt}
\begin{picture}(40,45)(0,0)
\put(-15,47){\textcircled{2}}
\put(20,0){\circle*{3}}
\put(20,40){\circle*{3}}
\put(8,12){\circle*{3}}
\put(8,28){\circle*{3}}
\put(40,20){\circle*{3}}
\put(20,0){\line(-1,1){12}}
\put(20,0){\line(1,1){20}}
\put(20,40){\line(-1,-1){12}}
\put(20,40){\line(1,-1){20}}
\put(8,12){\line(0,1){16}}
\put(18,-7){$a$}
\put(10,43){$b\vee c$}
\put(2,8){$b$}
\put(1,26){$d$}
\put(42,17){$c$}
\end{picture}
&\hspace*{25pt}
\begin{picture}(40,45)(0,0)
\put(-15,47){\textcircled{3}}
\put(20,0){\circle*{3}}
\put(20,40){\circle*{3}}
\put(0,20){\circle*{3}}
\put(32,12){\circle*{3}}
\put(32,28){\circle*{3}}
\put(20,0){\line(-1,1){20}}
\put(20,0){\line(1,1){12}}
\put(20,40){\line(-1,-1){20}}
\put(20,40){\line(1,-1){12}}
\put(32,12){\line(0,1){16}}
\put(18,-7){$a$}
\put(10,43){$b\vee c$}
\put(-6,16){$b$}
\put(34,9){$c$}
\put(34,25){$d$}
\end{picture}
&\hspace*{25pt}
\begin{picture}(60,45)(0,0)
\put(-15,47){\textcircled{4}}
\put(20,0){\circle*{3}}
\put(20,40){\circle*{3}}
\put(0,20){\circle*{3}}
\put(40,20){\circle*{3}}
\put(20,0){\line(-1,1){20}}
\put(20,0){\line(1,1){20}}
\put(20,40){\line(-1,-1){20}}
\put(20,40){\line(1,-1){20}}
\put(18,-7){$a$}
\put(10,43){$b\vee c$}
\put(-6,16){$b$}
\put(42,17){$c$}
\put(45,40){\circle*{3}}
\put(65,20){\circle*{3}}
\put(65,20){\line(-1,1){20}}
\put(63,12){$d$}
\put(43,43){$e$}
\end{picture}
\end{tabular}\end{center}

Recall from Section \ref{thealg} that, for any bounded lattice $L$, $\cdot ^{\Delta L}$ is the trivial weak complementation on $L$.

Throughout the rest of this section, we will be using the notation in Sections \ref{coatomic} and \ref{hsums} for the operation $\cdot ^{\Delta a,b}$ on a bounded lattice $L$ with two different coatoms $a$ and $b$ defined by: $1^{\Delta a,b}=0$, $a^{\Delta a,b}=b$, $b^{\Delta a,b}=a$ and $x^{\Delta a,b}=1$ for any $x\in L\setminus \{a,b,1\}$. Recall from Proposition \ref{2coat}, (\ref{2coat2}), that, if $L$ is distributive, then $\cdot ^{\Delta a,b}$ is a weak complementation on $L$, and from Theorem \ref{mainth} that, if $L$ is the horizontal sum of a coatomic bounded lattice with the unique coatom $a$ and a coatomic bounded lattice with the unique coatom $b$, then $\cdot ^{\Delta a,b}$ is the unique nontrivial weak complementation on $L$.

\begin{lemma} For any finite weakly complemented lattice $(L,\cdot ^{\Delta })$ and any $a,b\in L$ such that $a\prec b$, we have:\begin{enumerate}
\item\label{pWCLcg1} $|L/Cg_{\WCL ,L}(a,b)|=|L|-1$ iff $Cg_{\WCL ,L}(a,b)=Cg_L(a,b)=\varepsilon _L(a,b)$ iff $a\in {\rm Mi}(L)$, $b\in {\rm Ji}(L)$ and either $L=\{a,b\}\cong {\cal C}_2$ or $a^{\Delta }=b^{\Delta }=1$;

\item\label{pWCLcg2} $|L/Cg_{\WCL ,L}(a,b)|=|L|-2$ iff we are in one of the following cases:

\textcircled{$\alpha $}$\ $ $b=1$ and $L=\{0,a,1\}\cong {\cal C}_3$, in particular $Cg_L(a,b)=\varepsilon _L(a,b)\subsetneq Cg_{\WCL ,L}(a,b)$;

\textcircled{$\beta $}$\ $ $Cg_{\WCL ,L}(a,b)=Cg_L(a,b)$, $Cg_L(a,b)$ is as in Lemma \ref{pcg}, (\ref{pcg2}), and one of the following holds:

\quad \textcircled{$\beta\! _1$}$\ $ $a^{\Delta }=b^{\Delta }$ and, in the case when $a\prec c\in L\setminus \{b\}$, $c^{\Delta }=(b\vee c)^{\Delta }$, while, in the lattice dual of this situation, $c^{\Delta }=(a\wedge c)^{\Delta }$;

\quad \textcircled{$\beta\! _2$}$\ $ in the case when $a\prec c\in L\setminus \{b\}$, $L=\{a,b,c,b\vee c\}\cong {\cal C}_2^2$ and $\cdot ^{\Delta }=\cdot ^{\Delta b,c}$ is the Boolean complementation, and, in the lattice dual of this situation, $L=\{a\wedge c,c,a,b\}\cong {\cal C}_2^2$ and $\cdot ^{\Delta }=\cdot ^{\Delta a,c}$ is the Boolean complementation;

\item\label{pWCLcg3} $|L/Cg_{\WCL ,L}(a,b)|=|L|-3$ iff we are in one of the following cases or their lattice duals:

\textcircled{$\gamma $}$\ $ $a\in {\rm Mi}(L)$, $b\in {\rm Ji}(L)$, $Cg_L(a,b)=\varepsilon _L(a,b)$ and $|L/Cg_{\WCL ,L}(a,b)|=|L/Cg_L(a,b)|-2$, case in which $L\cong {\cal C}_4$, $b=1$ and $a$ is the coatom of $L$;

\textcircled{$\delta $}$\ $ $a\prec c$ for some $c\in L\setminus \{a\}$ such that $b\prec b\vee c$, $c\prec b\vee c$ and $[a,b\vee c]_L=\{a,b,c,b\vee c\}\cong {\cal C}_2^2$, $Cg_L(a,b)=\varepsilon _L(a,b)\cup \varepsilon _L(c,b\vee c)$, and $|L/Cg_{\WCL ,L}(a,b)|=|L/Cg_L(a,b)|-1$, case in which we are in one of the following subcases:

\quad \textcircled{$\delta\! _1$}$\ $ $a=0$ and $b\vee c=1$, so $L=\{a,b,c,b\vee c\}\cong {\cal C}_2^2$, and $\cdot ^{\Delta }=\cdot ^{\Delta L}$ in the trivial weak complementation, so $Cg_{\WCL ,L}(a,b)=L^2$;

\quad \textcircled{$\delta\! _2$}$\ $ $0\prec a$, $b\vee c=1$, $L=\{0,a,b,c,1\}\cong {\cal C}_2\oplus {\cal C}_2^2$, $\cdot ^{\Delta }=\cdot ^{\Delta b,c}$ restricts to the Boolean complementation on ${\cal C}_2^2$ and $Cg_{\WCL ,L}(a,b)=\varepsilon _L(0,a,b)\cup \varepsilon _L(c,1)$;

\quad \textcircled{$\delta\! _3$}$\ $ $(b\vee c)^{\Delta }\prec c^{\Delta }<b^{\Delta }=a^{\Delta }$, in particular $\cdot ^{\Delta }$ has at least three distinct values and thus it is nontrivial, $Cg_{\WCL ,L}(a,b)=\varepsilon _L(a,b)\cup \varepsilon _L(c,b\vee c)\cup \varepsilon _L(c^{\Delta },(b\vee c)^{\Delta })$, $a^{\Delta \Delta }=b^{\Delta \Delta }=a$, $c^{\Delta \Delta }=c$ and $(b\vee c)^{\Delta \Delta }=b\vee c$;

\quad \textcircled{$\delta\! _4$}$\ $ $(b\vee c)^{\Delta }=b^{\Delta }\prec a^{\Delta }=c^{\Delta }$, in particular $b\vee c\neq 1$, so $\cdot ^{\Delta }$ has at least three distinct values and thus it is nontrivial, $Cg_{\WCL ,L}(a,b)=\varepsilon _L(a,b)\cup \varepsilon _L(c,b\vee c)\cup \varepsilon _L(a^{\Delta },b^{\Delta })$, $a^{\Delta \Delta }=c^{\Delta \Delta }=a$ and $b^{\Delta \Delta }=(b\vee c)^{\Delta \Delta }=b$;

\textcircled{$\epsilon $}$\ $ $Cg_{\WCL ,L}(a,b)=Cg_L(a,b)$ and $Cg_L(a,b)$ is as in case \textcircled{1} in Lemma \ref{pcg}, (\ref{pcg3}), case in which one of the following holds:

\quad \textcircled{$\epsilon _1$}$\ $ $a^{\Delta }=b^{\Delta }=c^{\Delta }=(b\vee c)^{\Delta }$;

\quad \textcircled{$\epsilon _2$}$\ $ $L=\{a,b,c,b\vee c\}\cong {\cal C}_2^2$ and $\cdot ^{\Delta }=\cdot ^{\Delta b,c}$ is the Boolean complementation;

\textcircled{$\varphi $}$\ $ $Cg_{\WCL ,L}(a,b)=Cg_L(a,b)$ and $Cg_L(a,b)$ is as in case \textcircled{2} in Lemma \ref{pcg}, (\ref{pcg3}), case in which one of the following holds:

\quad \textcircled{$\varphi _1$}$\ $ $a^{\Delta }=b^{\Delta }=d^{\Delta }$ and $c^{\Delta }=(b\vee c)^{\Delta }$, in particular $b\vee c\neq 1$;

\quad \textcircled{$\varphi _2$}$\ $ $L=\{a,b,c,d,b\vee c\}\cong {\cal N}_5$ and $\cdot ^{\Delta }=\cdot ^{\Delta c,d}$ is the nontrivial weak complementation on ${\cal N}_5$;

\textcircled{$\psi $}$\ $ $Cg_{\WCL ,L}(a,b)=Cg_L(a,b)$ and $Cg_L(a,b)$ is as in case \textcircled{3} in Lemma \ref{pcg}, (\ref{pcg3}), case in which one of the following holds:

\quad \textcircled{$\psi _1$}$\ $ $a^{\Delta }=b^{\Delta }$ and $c^{\Delta }=d^{\Delta }=(b\vee c)^{\Delta }$, in particular $b\vee c\neq 1$;

\quad \textcircled{$\psi _2$}$\ $ $L=\{a,b,c,d,b\vee c\}\cong {\cal N}_5$ and $\cdot ^{\Delta }=\cdot ^{\Delta b,d}$ is the nontrivial weak complementation on ${\cal N}_5$;

\textcircled{$\chi $}$\ $ $Cg_{\WCL ,L}(a,b)=Cg_L(a,b)$ and $Cg_L(a,b)$ is as in case \textcircled{4} in Lemma \ref{pcg}, (\ref{pcg3}), case in which one of the following holds:

\quad \textcircled{$\chi\! _1$}$\ $ $a^{\Delta }=b^{\Delta }$, $c^{\Delta }=(b\vee c)^{\Delta }$ and $d^{\Delta }=e^{\Delta }$, in particular $b\vee c\neq 1$ and $e\neq 1$;

\quad \textcircled{$\chi\! _2$}$\ $ $\cdot ^{\Delta }$ is nontrivial and ${\cal C}_4\boxplus {\cal C}_4\cong \{c\wedge d,c,d,e,b\vee c,1\}$ is a sublattice of $L$;

\quad \textcircled{$\chi\! _3$}$\ $ $a=0$, $e=1$, ${\cal C}_2\times {\cal C}_3\cong \{a,b,c,b\vee c,d,e\}$ is a bounded sublattice of $L$, $b^{\Delta }=d$ and $d^{\Delta }=b$;

\quad \textcircled{$\chi\! _4$}$\ $ $d=0$, $b\vee c=1$, ${\cal C}_2\times {\cal C}_3\cong \{d,e,a,b,c,b\vee c\}$ is a bounded sublattice of $L$, $e^{\Delta }=c$ and $c^{\Delta }=e$.\end{enumerate}\label{pWCLcg}\end{lemma}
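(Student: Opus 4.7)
The overarching strategy is to exploit the identity $Cg_{\WCL,L}(a,b)=Cg_{\WCL,L}(Cg_L(a,b))$ and to compute $Cg_{\WCL,L}(a,b)$ as the closure of $Cg_L(a,b)$ alternately under $\cdot ^{\Delta }$ and under generation of lattice congruences: starting from $\theta _0:=Cg_L(a,b)$, we iterate $\theta _{n+1}:=Cg_L(\theta _n\cup \{(x^{\Delta },y^{\Delta })\mid (x,y)\in \theta _n\})$ until stabilisation, which happens in finitely many steps since $L$ is finite. Each pair surviving past this process increases the number of collapses by exactly one. Combined with Lemma~\ref{pcg}, which classifies $Cg_L(a,b)$ when it loses $1$, $2$ or $3$ elements, and Remark~\ref{onWCLcg}, which forbids $x$ and $x^{\Delta }$ from being collapsed by a proper $\WCL $--congruence, the proof reduces to a case analysis organised by the shape of $Cg_L(a,b)$ and by the number of extra collapses that the $\Delta $--closure introduces.

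For part (\ref{pWCLcg1}) I would proceed as follows. Losing only one element forces $Cg_{\WCL ,L}(a,b)=Cg_L(a,b)=\varepsilon _L(a,b)$, so Lemma~\ref{pcg}(\ref{pcg1}) gives $a\in {\rm Mi}(L)$ and $b\in {\rm Ji}(L)$. Preservation of $\cdot ^{\Delta }$ by $\varepsilon _L(a,b)$ means that $(a^{\Delta },b^{\Delta })$ must lie in a single class, so either $\{a^{\Delta },b^{\Delta }\}\subseteq \{a,b\}$, in which case Remark~\ref{onWCLcg} forces $\varepsilon _L(a,b)=L^2$ and hence $L\cong {\cal C}_2$, or else $a^{\Delta }=b^{\Delta }$ is a singleton class outside $\{a,b\}$. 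To rule out $a^{\Delta }=b^{\Delta }\ne 1$ in the latter subcase, I would combine $a\vee a^{\Delta }=b\vee b^{\Delta }=1$, the quasiequation $x^{\Delta }\approx 0\to x\approx 1$, the inequality $a^{\Delta \Delta }\le a$, and the ${\rm Mi}/{\rm Ji}$ hypotheses on $a$ and $b$ to derive $a^{\Delta }=1$; this verification is the only delicate point in (\ref{pWCLcg1}).

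For part (\ref{pWCLcg2}) I would split on whether $Cg_L(a,b)$ itself already drops two elements (case \textcircled{$\beta $}, coming from Lemma~\ref{pcg}(\ref{pcg2}), in which $Cg_L(a,b)$ is already $\WCL $--closed) or drops only one (case \textcircled{$\alpha $}), in which case the $\cdot ^{\Delta }$--closure must add exactly one pair; analysing where that pair can sit without triggering further collapses will pin down $L$ and the action of $\cdot ^{\Delta }$. Part (\ref{pWCLcg3}) is the genuine obstacle and the bulk of the work: I would enumerate cases according to whether $|L/Cg_L(a,b)|$ equals $|L|-1$, $|L|-2$ or $|L|-3$ and then track how many extra pairs the $\Delta $--closure introduces. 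When $Cg_L(a,b)$ already collapses three elements (cases \textcircled{$\epsilon $}--\textcircled{$\chi $}, matching Lemma~\ref{pcg}(\ref{pcg3})), I would verify directly that $Cg_L(a,b)$ is already $\Delta $--closed, which forces $\cdot ^{\Delta }$ to take at most two distinct values on each non--singleton class (the subscript--$1$ subcases) or else to restrict to one of the small explicit weak complementations on ${\cal C}_2^2$, ${\cal N}_5$ or ${\cal C}_2\times {\cal C}_3$ flagged in the statement. When $Cg_L(a,b)$ drops two (case \textcircled{$\delta $}), I would enumerate the four \textcircled{$\delta _i$} subcases by tracing $a^{\Delta }$, $b^{\Delta }$, $c^{\Delta }$ and $(b\vee c)^{\Delta }$ and using $a^{\Delta \Delta }\le a$, order--reversal, and the $\WCL $--identities to pin down both the interval $[a,b\vee c]_L$ and the values of $\cdot ^{\Delta }$ on it. The main technical obstacle throughout is that every new pair added to enforce $\Delta $--closure forces a fresh lattice--congruence closure and a further application of $\cdot ^{\Delta }$; guaranteeing that this iteration terminates after precisely the advertised number of new pairs is what drives the sharp sublattice identifications (${\cal C}_2\oplus {\cal C}_2^2$ in \textcircled{$\delta _2$}, the ${\cal N}_5$ and ${\cal C}_2\times {\cal C}_3$ configurations in \textcircled{$\varphi $}--\textcircled{$\chi $}, and the ${\cal C}_4\boxplus {\cal C}_4$ sublattice in \textcircled{$\chi _2$}), which I would in each case establish by exhibiting explicit generators and verifying the $\WCL $--identities on the small interval thus isolated.
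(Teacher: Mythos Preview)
Your plan is correct and follows essentially the same architecture as the paper's proof: split according to how many elements $Cg_L(a,b)$ already drops (via Lemma~\ref{pcg}), then track the extra collapses forced by closing under $\cdot^{\Delta}$, using Remark~\ref{onWCLcg} to detect when the congruence blows up to $L^2$. The case organisation you describe for (\ref{pWCLcg2}) and (\ref{pWCLcg3}) matches the paper's exactly.

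One point deserves a sharper tool than the ones you list. In (\ref{pWCLcg1}) you propose to derive $a^{\Delta}=1$ from $a\vee a^{\Delta}=1$, the quasiequation $x^{\Delta}\approx 0\rightarrow x\approx 1$, $a^{\Delta\Delta}\le a$, and the ${\rm Mi}/{\rm Ji}$ hypotheses. These by themselves do not obviously force $a^{\Delta}\ge b$; the clean way (and the way the paper does it) is to invoke the third defining identity of weak complementations directly: from $b=(b\wedge a)\vee(b\wedge a^{\Delta})=a\vee(b\wedge a^{\Delta})$ and $a\prec b$ one gets $b\wedge a^{\Delta}=b$, hence $a^{\Delta}\ge b>a$, and then $a^{\Delta}=a\vee a^{\Delta}=1$ is immediate. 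You will want this same identity again in several of the subcases of (\ref{pWCLcg3}) (for instance in ruling out certain configurations under \textcircled{$\gamma$} and \textcircled{$\delta$}), so it is worth adding it explicitly to your toolkit rather than relying only on the derived order-theoretic consequences.
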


\begin{proof} We will repeatedly use Remark \ref{onWCLcg}.

\noindent (\ref{pWCLcg1}) By Lemma \ref{pcg}, (\ref{pcg1}), $a\in {\rm Mi}(L)$ and $b\in {\rm Ji}(L)$. But $b=(b\wedge a)\vee (b\wedge a^{\Delta })=a\vee (b\wedge a^{\Delta })$, and $a<b$, hence $b\wedge a^{\Delta }=b$, so $a^{\Delta }\geq b>a$, thus, since $a\vee a^{\Delta }=1$, it follows that $a^{\Delta }=1$, and, since $Cg_{\WCL ,L}(a,b)$ only collapses $a$ with $b$, either $b^{\Delta }=a^{\Delta }=1$ or $b^{\Delta }\in \{a,b\}$, case in which $Cg_{\WCL ,L}(a,b)=L^2$, thus $|L|=|L/Cg_{\WCL ,L}(a,b)|+1=1+1=2$, so $L=\{a,b\}\cong {\cal L}_2$.

\noindent (\ref{pWCLcg2}) The fact that $|L/Cg_{\WCL ,L}(a,b)|=|L|-2$ implies that we are in one of the following cases:

\textcircled{$\alpha $}$\ $ $a\in {\rm Mi}(L)$ and $b\in {\rm Ji}(L)$, so that $Cg_L(a,b)=\varepsilon _L(a,b)$ by Lemma \ref{pcg}, (\ref{pcg1}), and $|L/Cg_{\WCL ,L}(a,b)|=|L/Cg_L(a,b)|-1$, in particular $Cg_L(a,b)\subsetneq Cg_{\WCL ,L}(a,b)$;

\textcircled{$\beta $}$\ $ $Cg_{\WCL ,L}(a,b)=Cg_L(a,b)$, which is as in Lemma \ref{pcg}, (\ref{pcg2}), and below we will consider the case when $a\prec c\in L\setminus \{b\}$, with its dual being treated similarly.

We can not have $a\in {\rm Mi}(L)$, $b\in {\rm Ji}(L)$ and $a^{\Delta }=b^{\Delta }$, because then $Cg_{\WCL ,L}(a,b)=Cg_L(a,b)=\varepsilon _L(a,b)$, thus $|L/Cg_{\WCL ,L}(a,b)|=|L|-1$, which would contradict the current hypothesis.

Hence, in the case \textcircled{$\alpha $}, where $a\in {\rm Mi}(L)$ and $b\in {\rm Ji}(L)$, so that $Cg_L(a,b)=\varepsilon _L(a,b)$, we have $a^{\Delta }\neq b^{\Delta }$, thus also $a^{\Delta \Delta }\neq b^{\Delta \Delta }$, since otherwise we would get $a^{\Delta }=a^{\Delta \Delta \Delta }=b^{\Delta \Delta \Delta }=b^{\Delta }$. Of course, since $(a,b)\in Cg_{\WCL ,L}(a,b)$, we also have $(a^{\Delta },b^{\Delta }),(a^{\Delta \Delta },b^{\Delta \Delta })\in Cg_{\WCL ,L}(a,b)$, and, since in this case $|L/Cg_{\WCL ,L}(a,b)|=|L/Cg_L(a,b)|-1$, either at most one of the elements $a^{\Delta },b^{\Delta },a^{\Delta \Delta },b^{\Delta \Delta }$ does not belong to $\{a,b\}=a/Cg_L(a,b)=b/Cg_L(a,b)\subseteq a/Cg_{\WCL ,L}(a,b)=b/Cg_{\WCL ,L}(a,b)$, so that $a^{\Delta }$ or $b^{\Delta }$ belongs to $a/Cg_{\WCL ,L}(a,b)=b/Cg_{\WCL ,L}(a,b)$, or\linebreak $a^{\Delta }/Cg_{\WCL ,L}(a,b)=\{a^{\Delta },b^{\Delta }\}=\{a^{\Delta \Delta },b^{\Delta \Delta }\}$; in either of these subcases, we have $x^{\Delta }\in Cg_{\WCL ,L}(a,b)$ for some $x\in L$, hence $Cg_{\WCL ,L}(a,b)=L^2$, so that $|L/Cg_{\WCL ,L}(a,b)|=1$, thus $|L|=|L/Cg_{\WCL ,L}(a,b)|+2=3$, therefore $L\cong {\cal C}_3$, so $\cdot ^{\Delta }$ is the trivial weak complementation, thus, since $a^{\Delta }\neq b^{\Delta }$ and $a\prec b$, it follows that $b=1$ and $a$ is the single element of $L\setminus \{0,1\}$.

In case \textcircled{$\beta $}, since $Cg_{\WCL ,L}(a,b)$ collapses no other elements but $a$ with $b$ and $c$ with $b\vee c$:

either $a^{\Delta }=b^{\Delta }$ and $c^{\Delta }=(b\vee c)^{\Delta }$,

or $x^{\Delta }\in \{a,b,c,b\vee c\}$ for some $x\in \{a,b,c,b\vee c\}$, but then, since $x\vee x^{\Delta }=1$ and $b\vee c=\max \{a,b,c,b\vee c\}$, it follows that $b\vee c=1$, thus $c/\alpha =1/\alpha $, hence $c^{\Delta }/\alpha =0/\alpha $, but, since $c\neq 1$ and thus $c^{\Delta }\neq 0$, we have $0,c^{\Delta }\in \{a,b,c,b\vee c\}$, hence $a=0$ and $c^{\Delta }=b$ since $a=\min \{a,b,c,b\vee c\}$, therefore, by Lemma \ref{pcg}, (\ref{pcg2}), $L=[0,1]_L=[a,b\vee c]_L=\{a,b,c,b\vee c\}\cong {\cal C}_2^2$ and $\cdot ^{\Delta }=\cdot ^{\Delta b,c}$.

\noindent (\ref{pWCLcg3}) $|L/Cg_{\WCL ,L}(a,b)|=|L|-3$ iff one of the following holds:

\textcircled{$\gamma $} $|L/Cg_L(a,b)|=|L|-1$ and $|L/Cg_{\WCL ,L}(a,b)|=|L/Cg_L(a,b)|-2$, so that $Cg_L(a,b)$ is as in Lemma \ref{pcg}, (\ref{pcg1}), and $Cg_L(a,b)\subsetneq Cg_{\WCL ,L}(a,b)$;

\textcircled{$\delta $} $|L/Cg_L(a,b)|=|L|-2$ and $|L/Cg_{\WCL ,L}(a,b)|=|L/Cg_L(a,b)|-1$, so that $Cg_L(a,b)$ is as in Lemma \ref{pcg}, (\ref{pcg2}), and $Cg_L(a,b)\subsetneq Cg_{\WCL ,L}(a,b)$;

\textcircled{$\zeta $} $|L/Cg_{\WCL ,L}(a,b)|=|L/Cg_L(a,b)|=|L|-3$, so that $Cg_{\WCL ,L}(a,b)=Cg_L(a,b)$, which is as in Lemma \ref{pcg}, (\ref{pcg3}).

\textcircled{$\gamma $} In this subcase, by Lemma \ref{pcg}, (\ref{pcg1}), $a\in {\rm Mi}(L)$, $b\in {\rm Ji}(L)$ and $Cg_L(a,b)=\varepsilon _L(a,b)$, and, since $Cg_L(a,b)\subsetneq Cg_{\WCL ,L}(a,b)$, we have $a^{\Delta }\neq b^{\Delta }$, thus also $a^{\Delta \Delta }\neq b^{\Delta \Delta }$.

Since $(a,b)\in Cg_{\WCL ,L}(a,b)$, we also have $(a^{\Delta },b^{\Delta }),(a^{\Delta \Delta },b^{\Delta \Delta })\in Cg_{\WCL ,L}(a,b)$, thus $Cg_{\WCL ,L}(a,b)\supseteq Cg_L(a,b)\vee Cg_L(a^{\Delta },b^{\Delta })\vee Cg_L(a^{\Delta \Delta },b^{\Delta \Delta })=\varepsilon _L(a,b)\vee Cg_L(a^{\Delta },b^{\Delta })\vee Cg_L(a^{\Delta \Delta },b^{\Delta \Delta })$.

$\bullet \ $ If the sets $\{a,b\}$, $\{a^{\Delta },b^{\Delta }\}$ and $\{a^{\Delta \Delta },b^{\Delta \Delta }\}$ are pairwise disjoint, then $\cdot ^{\Delta }$ is nontrivial, and the fact that $|L/Cg_{\WCL ,L}(a,b)|=|L|-3\geq 3$ ensures us that $Cg_{\WCL ,L}(a,b)=\varepsilon _L(a,b)\cup \varepsilon _L(a^{\Delta },b^{\Delta })\cup \varepsilon _L(a^{\Delta \Delta },b^{\Delta \Delta })\subsetneq L^2$, hence $b^{\Delta }\prec a^{\Delta }$, $a^{\Delta \Delta }\prec b^{\Delta \Delta }$ and either $Cg_L(a^{\Delta },b^{\Delta })=\varepsilon _L(a^{\Delta },b^{\Delta })$ and $Cg_L(a^{\Delta \Delta },b^{\Delta \Delta })=\varepsilon _L(a^{\Delta \Delta },b^{\Delta \Delta })$ or $Cg_L(a^{\Delta },b^{\Delta })=Cg_L(a^{\Delta \Delta },b^{\Delta \Delta })=\varepsilon _L(a^{\Delta },b^{\Delta })\cup \varepsilon _L(a^{\Delta \Delta },b^{\Delta \Delta })$. Note also that $a^{\Delta \Delta }<a$ and $b^{\Delta \Delta }<b$ since $a^{\Delta \Delta }\leq a$, $b^{\Delta \Delta }\leq b$ and $\{a,b\}\cap \{a^{\Delta \Delta },b^{\Delta \Delta }\}=\emptyset $.

If $Cg_L(a^{\Delta },b^{\Delta })=Cg_L(a^{\Delta \Delta },b^{\Delta \Delta })=\varepsilon _L(a^{\Delta },b^{\Delta })\cup \varepsilon _L(a^{\Delta \Delta },b^{\Delta \Delta })$, then, by Lemma \ref{pcg}, (\ref{pcg2}), $\{a^{\Delta },b^{\Delta },a^{\Delta \Delta },b^{\Delta \Delta }\}\cong {\cal C}_2^2$, so that $1=a^{\Delta }\vee a^{\Delta \Delta }\in {\rm Max}(\{a^{\Delta },b^{\Delta },a^{\Delta \Delta },b^{\Delta \Delta }\})\subseteq \{a^{\Delta },b^{\Delta \Delta }\}$. Since $b^{\Delta \Delta }<b$, it follows that $b^{\Delta \Delta }\neq 1$, hence $a^{\Delta }=1$ and, by Lemma \ref{pcg}, (\ref{pcg2}), $a^{\Delta \Delta }\prec b^{\Delta }$ and $b^{\Delta \Delta }\prec a^{\Delta }=1$; but $b^{\Delta \Delta }<b\leq 1=a^{\Delta }$, thus $b=1=a^{\Delta }$, contradicting the fact that $\{a,b\}$ and $\{a^{\Delta },b^{\Delta }\}$ are disjoint.

Hence $Cg_L(a^{\Delta },b^{\Delta })=\varepsilon _L(a^{\Delta },b^{\Delta })$ and $Cg_L(a^{\Delta \Delta },b^{\Delta \Delta })=\varepsilon _L(a^{\Delta \Delta },b^{\Delta \Delta })$, so that, by Lemma \ref{pcg}, (\ref{pcg1}), $b^{\Delta },a^{\Delta \Delta }\in {\rm Mi}(L)$ and $a^{\Delta },b^{\Delta \Delta }\in {\rm Ji}(L)$. But $a^{\Delta }=(a^{\Delta }\wedge b^{\Delta })\vee (a^{\Delta }\wedge b^{\Delta \Delta })=b^{\Delta }\vee (a^{\Delta }\wedge b^{\Delta \Delta })$, and, since $b^{\Delta }<a^{\Delta }\in {\rm Ji}(L)$, it follows that $a^{\Delta }=a^{\Delta }\wedge b^{\Delta \Delta }$, thus $a^{\Delta }\leq b^{\Delta \Delta }>a^{\Delta \Delta }$, hence $1=a^{\Delta }\vee a^{\Delta \Delta }\leq b^{\Delta \Delta }$, thus $1=b^{\Delta \Delta }<b$, a contradiction.

$\bullet \ $If $\{a,b\}\cap \{a^{\Delta },b^{\Delta }\}\neq \emptyset $ or $\{a^{\Delta },b^{\Delta }\}\cap \{a^{\Delta \Delta },b^{\Delta \Delta }\}\neq \emptyset $, then $Cg_{\WCL ,L}(a,b)=L^2$, thus $|L/Cg_{\WCL ,L}(a,b)|=1$, hence $|L|=1+3=4$, so $L\cong {\cal C}_2^2$ or $L\cong {\cal C}_4$, but $L\cong {\cal C}_2^2$ would contradict the fact that $a\prec b$, $a\in {\rm Mi}(L)$ and $b\in {\rm Ji}(L)$, thus $L\cong {\cal C}_4$, so $\cdot ^{\Delta }=\cdot ^{\Delta (L)}$, and, since $a\prec b$ and $a^{\Delta }\neq b^{\Delta }$, it follows that $b=1$ and $a$ is the coatom of $L$. 

$\bullet \ $ If $\{a,b\}\cap \{a^{\Delta \Delta },b^{\Delta \Delta }\}\neq \emptyset $, then, since $a^{\Delta \Delta }<b^{\Delta \Delta }$, but also $a^{\Delta \Delta }\leq a$ and $b^{\Delta \Delta }\leq b$, it follows that $b^{\Delta \Delta }=a$, so, by the fact that $|L/Cg_{\WCL ,L}(a,b)|=|L|-3$, either, as above, $Cg_{\WCL ,L}(a,b)=L^2$, $L\cong {\cal C}_4$ and $b=1$, or $\{a,b\}\cap \{a^{\Delta },b^{\Delta }\}=\{a^{\Delta },b^{\Delta }\}\cap \{a^{\Delta \Delta },b^{\Delta \Delta }\}=\emptyset $, $a^{\Delta \Delta }\prec a=b^{\Delta \Delta }$ and $b^{\Delta }\prec a^{\Delta }$; however, the latter implies $b^{\Delta \Delta }\leq a$, hence $a^{\Delta }\leq b^{\Delta }$, contradicting $b^{\Delta }<a^{\Delta }$.

\textcircled{$\delta $} In this subcase, by Lemma \ref{pcg}, (\ref{pcg2}), the following or its lattice dual holds: $a\prec c$ for some $c\in L\setminus \{b\}$ such that $b\prec b\vee c$, $c\prec b\vee c$ and $Cg_L(a,b)=\varepsilon_L(a,b)\cup \varepsilon_L(c,b\vee c)$.

We can't have $a^{\Delta }=b^{\Delta }$ and $c^{\Delta }=(b\vee c)^{\Delta }$, because then we'd have $Cg_{\WCL ,L}(a,b)=Cg_L(a,b)$, contradicting the above.

\noindent $\blacksquare \ $ If $x^{\Delta }\in x/Cg_{\WCL ,L}(a,b)$ for some $x\in \{a,b,c,b\vee c\}$, then $Cg_{\WCL ,L}(a,b)=L^2$, thus $|L|=|L/Cg_{\WCL ,L}(a,b)|+3=1+3=4$, hence $L=\{a,b,c,b\vee c\}\cong {\cal C}_4$, with $a=0$ and $b\vee c=1$. The fact that $x^{\Delta }\in x/Cg_{\WCL ,L}(a,b)$ for some $x\in \{a,b,c,b\vee c\}$ for some $x\in \{a,b,c,b\vee c\}$ implies that $\cdot ^{\Delta }\neq \cdot ^{\Delta b,c}$, hence $\cdot ^{\Delta }=\cdot ^{\Delta (L)}$ (so that $c^{\Delta }=1\in c/Cg_{\WCL ,L}(a,b)$).

\noindent $\blacksquare \ $ Now let us assume that no $x\in \{a,b,c,b\vee c\}$ has $x^{\Delta }\in x/Cg_{\WCL ,L}(a,b)$.

\noindent $\ \blacktriangleright \ $ If $x^{\Delta }\in \{a,b,c,b\vee c\}$ for some $x\in \{a,b,c,b\vee c\}$, then $b\vee c=1$, hence $0=(b\vee c)^{\Delta }\in c^{\Delta }/Cg_{\WCL ,L}(a,b)$. If $a=0$, so that $L=[0,1]_L=\{a,b,c,b\vee c\}$, then $c^{\Delta }=b$, so $c^{\Delta }\leq b$, hence $b^{\Delta }\leq c$, thus $b^{\Delta }=c$, so $\cdot ^{\Delta }=\cdot ^{\Delta b,c}$, but then $Cg_{\WCL ,L}(a,b)=Cg_L(a,b)$, contradicting the above. Hence $a\neq 0$, thus $0\notin \{a,b,c,b\vee c\}$.

$\bullet \ $ If $c^{\Delta }\in \{a,b,c,b\vee c\}\setminus c/Cg_{\WCL ,L}(a,b)=\{a,b\}$, then $c^{\Delta }\leq b$, thus $b^{\Delta }\leq c$, and the fact that $c^{\Delta }/Cg_{\WCL ,L}(a,b)\linebreak =0/Cg_{\WCL ,L}(a,b)$ and $|L/Cg_{\WCL ,L}(a,b)|=|L|-3$ ensures us that $Cg_{\WCL ,L}(a,b)=\varepsilon _L(0,a,b)\cup \varepsilon _L(c,b\vee c)=\varepsilon _L(0,a,b)\cup \varepsilon _L(c,1)$, so $0\prec a$; then $c^{\Delta }\neq a$, because otherwise $c^{\Delta }\leq a$, thus $a^{\Delta }\leq c$, so $a\vee a^{\Delta }\leq c<1$, a contradiction; thus $c^{\Delta }=b$, hence $b^{\Delta }=c$, because $b^{\Delta }<c$ would imply $c^{\Delta }\leq b^{\Delta }<c$, a contradiction. If there existed any $x\in L\setminus \{0,a,b,c,b\vee c\}=L\setminus \{0,a,b,c,1\}$, then we can`t have $x>a$, because then $x\in {\rm CoAt}(L)\setminus \{b,c\}$, so that $\{a,b,c,x,1\}\cong {\cal M}_3$ is a sublattice of $L$, which would contradict the fact that $(b,c)\notin Cg_L(a,b)$ since ${\cal M}_3$ is a simple lattice. Thus there exists a $y\in {\rm At}(L)\setminus \{a\}$ and ${\rm CoAt}(L)=\{b,c\}$, therefore either $y<b$, case in which we'd get the contradiction $y\in b/Cg_L(a,b)$, or $y<c$, case in which we'd get the contradiction $y\in c/Cg_L(a,b)$. Therefore, in this case, $L=\{0,a,b,c,b\vee c\}=\{0,a,b,c,1\}\cong {\cal C}_2\oplus {\cal C}_2^2$ and $\cdot ^{\Delta }=\cdot ^{\Delta b,c}$.

$\bullet $ If $c^{\Delta }\notin \{a,b,c,b\vee c\}$, then the fact that $|L/Cg_{\WCL ,L}(a,b)|=|L|-3$ implies $0\prec c^{\Delta }$ and $Cg_{\WCL ,L}(a,b)=\varepsilon _L(0,c^{\Delta })\cup \varepsilon _L(a,b)\cup \varepsilon _L(b,b\vee c)$. Thus $c^{\Delta \Delta }\in 0^{\Delta }/Cg_{\WCL ,L}(a,b)=1/Cg_{\WCL ,L}(a,b)=\{c,1\}$ and $c^{\Delta \Delta }\leq c$, so $c^{\Delta \Delta }=c$. Also, by the same argument as above, the atom $c^{\Delta }$ can not be comparable to either of $b$ and $c$, thus $c^{\Delta }\vee b=1$ and $c>c\wedge c^{\Delta }$, thus $c\wedge c^{\Delta }=0$. But then $L$ has the bounded sublattice $\{0,c^{\Delta },c,1\}=\{0,c^{\Delta },c,b\vee c\}\cong {\cal C}_2^2$, so that $(0,c^{\Delta })\in Cg_L(c,b\vee c)=Cg_L(a,b)=\varepsilon _L(a,b)\cup \varepsilon _L(c,b\vee c)$, and we have a contradiction.

\noindent $\ \blacktriangleright \ $ The remaining subcase is $x^{\Delta }\notin \{a,b,c,b\vee c\}$ for any $x\in \{a,b,c,b\vee c\}$. But $(a,b),(c,b\vee c)\in Cg_L(a,b)\subsetneq Cg_{\WCL ,L}(a,b)$, thus $(a^{\Delta },b^{\Delta }),(c^{\Delta },(b\vee c)^{\Delta })\in Cg_{\WCL ,L}(a,b)$, and $a^{\Delta }\neq b^{\Delta }$ or $c^{\Delta }\neq (b\vee c)^{\Delta }$, thus $Cg_{\WCL ,L}(a,b)=\varepsilon _L(a,b)\cup \varepsilon _L(c,b\vee c)\cup \varepsilon _L(C)$, where:

$\bullet \ $ if $(a^{\Delta },c^{\Delta })\notin Cg_{\WCL ,L}(a,b)$, then one of the sets $\{a^{\Delta },b^{\Delta }\}$ and $\{c^{\Delta },(b\vee c)^{\Delta }\}$ is a singleton and the other one is a two--element class $C$ of $Cg_{\WCL ,L}(a,b)$;

$\ \ast \ $ if $a^{\Delta }\neq b^{\Delta }$, then $c^{\Delta }=(b\vee c)^{\Delta }$ and $a^{\Delta \Delta }\neq b^{\Delta \Delta }$, so $a^{\Delta \Delta },b^{\Delta \Delta }$ belong to a nonsingleton class of $Cg_{\WCL ,L}(a,b)$, which in the current subcase, since $a^{\Delta \Delta }\leq a$ and $b^{\Delta \Delta }\leq b$, can only be $\{a,b\}$, so $a^{\Delta \Delta }=a<b=b^{\Delta \Delta }$; also, $b^{\Delta }\neq a^{\Delta }=(b\wedge c)^{\Delta }=b^{\Delta }\vee c^{\Delta }$, thus $c^{\Delta }\nleq b^{\Delta }$, a contradiction;

$\ \centerdot \ $ if $a^{\Delta }=b^{\Delta }$, then $c^{\Delta }\neq (b\vee c)^{\Delta }$, so $b^{\Delta }=a^{\Delta }=(b\wedge c)^{\Delta }=b^{\Delta }\vee c^{\Delta }$ and $c^{\Delta \Delta }\neq (b\vee c)^{\Delta \Delta }$, thus $(b\vee c)^{\Delta }<c^{\Delta }\leq b^{\Delta }=a^{\Delta }$; but $(a^{\Delta },c^{\Delta })\notin Cg_{\WCL ,L}(a,b)$, so $a^{\Delta }\neq c^{\Delta }$, thus $(b\vee c)^{\Delta }\prec c^{\Delta }<b^{\Delta }=a^{\Delta }$ since here $C=\{c^{\Delta },(b\vee c)^{\Delta }\}$ is a class of $Cg_{\WCL ,L}(a,b)$, hence $(b\vee c)^{\Delta \Delta }>c^{\Delta \Delta }>b^{\Delta \Delta }=a^{\Delta \Delta }$ and $a^{\Delta \Delta }=b^{\Delta \Delta },c^{\Delta \Delta },(b\vee c)^{\Delta \Delta }\in \{a,b,c,b\vee c\}\cong {\cal C}_2^2$, thus, since $c^{\Delta \Delta }\leq c$, we have $a^{\Delta \Delta }=b^{\Delta \Delta }=a$, $c^{\Delta \Delta }=c$ and $(b\vee c)^{\Delta \Delta }=b\vee c$;

$\bullet \ $ if $(a^{\Delta },c^{\Delta })\in Cg_{\WCL ,L}(a,b)$, then $C=\{a^{\Delta },b^{\Delta },c^{\Delta },(b\vee c)^{\Delta }\}$ is a two--element class of $Cg_{\WCL ,L}(a,b)$, thus $(b\vee c)^{\Delta }<a^{\Delta }=(b\wedge c)^{\Delta }=b^{\Delta }\vee c^{\Delta }$; as in subcase $\ast $ above, we can not have $a^{\Delta }\neq b^{\Delta }$ and $c^{\Delta }=(b\vee c)^{\Delta }$, thus: $a^{\Delta }=b^{\Delta }$ iff $c^{\Delta }=(b\vee c)^{\Delta }$;

$\ \centerdot \ $ if $a^{\Delta }=c^{\Delta }$, then, by the above, $b^{\Delta }\neq a^{\Delta }=c^{\Delta }\neq (b\vee c)^{\Delta }$, so $b^{\Delta }=(b\vee c)^{\Delta }\prec a^{\Delta }=c^{\Delta }$ since $b^{\Delta }\leq a^{\Delta }$ and $\{a^{\Delta },b^{\Delta },c^{\Delta },(b\vee c)^{\Delta }\}$ is a two--element class of $Cg_{\WCL ,L}(a,b)$, hence $a^{\Delta \Delta }=c^{\Delta \Delta }<b^{\Delta \Delta }=(b\vee c)^{\Delta \Delta }$, but, since $(a,b)\in Cg_{\WCL ,L}(a,b)$, $a^{\Delta \Delta }$ and $b^{\Delta \Delta }$ must belong to a nonsingleton class of $Cg_{\WCL ,L}(a,b)$, that in the current subcase can only be $\{a,b\}$, thus $a^{\Delta \Delta }=c^{\Delta \Delta }=a\prec b=b^{\Delta \Delta }=(b\vee c)^{\Delta \Delta }$;

$\ \centerdot \ $ if $a^{\Delta }\neq c^{\Delta }$, then, by the above, $a^{\Delta }=b^{\Delta }$ and $c^{\Delta }=(b\vee c)^{\Delta }$, which implies $Cg_L(a,b)=\varepsilon _L(a,b)\cup \varepsilon _L(c,b\vee c)=Cg_{\WCL ,L}(a,b)$, a contradiction.

\textcircled{$\zeta $} In this subcase, we treat separately the subcases of Lemma \ref{pcg}, (\ref{pcg3}), using the notations from this lemma.

\textcircled{$\epsilon $} If $Cg_L(a,b)$ is as in case \textcircled{$1$} in Lemma \ref{pcg}, (\ref{pcg3}), then $\{a,b,c,b\vee c\}$ is the only nonsingleton class of $Cg_{\WCL ,L}(a,b)$, so we have either $a^{\Delta }=b^{\Delta }=c^{\Delta }=(b\vee c)^{\Delta }$ or $x^{\Delta }\in \{a,b,c,b\vee c\}$ for some $x\in \{a,b,c,b\vee c\}$, which implies $\alpha =L^2$, so that $|L/\alpha |=1$, thus $|L|=|L/\alpha |+3=4$, hence $L=\{a,b,c,b\vee c\}\cong {\cal C}_2^2$.

\textcircled{$\varphi $} If $Cg_L(a,b)$ is as in case \textcircled{$2$} in Lemma \ref{pcg}, (\ref{pcg3}), then $\{a,b,d\}$ and $\{c,b\vee c\}$ are the only nonsingleton classes of $Cg_{\WCL ,L}(a,b)$, hence either $a^{\Delta }=b^{\Delta }=d^{\Delta }$ and $c^{\Delta }=(b\vee c)^{\Delta }$ or $x^{\Delta }\in \{a,b,c,d,b\vee c\}$ for some $x\in \{a,b,c,d,b\vee c\}$, so that $1=x\vee x^{\Delta }\in \{a,b,c,d,b\vee c\}$, thus $b\vee c=1$, so $c/\alpha =1/\alpha $, hence $c^{\Delta }/\alpha =0/\alpha $ and $c^{\Delta }\neq 0$ since $c\neq 1$, therefore $0,c^{\Delta }\in \{a,b,c,d,b\vee c\}$, thus $a=0$, hence $L=[0,1]_L=[a,b\vee c]_L=\{a,b,c,d,b\vee c\}\cong {\cal N}_5$ and $\cdot ^{\Delta }$ is the only nontrivial weak complementation on ${\cal N}_5$, namely $\cdot ^{\Delta c,d}$ here.

\textcircled{$\psi $} Analogously for the situation when $Cg_L(a,b)$ is as in case \textcircled{$3$} in Lemma \ref{pcg}, (\ref{pcg3}). 

\textcircled{$\chi $} If $Cg_L(a,b)$ is as in case \textcircled{$4$} in Lemma \ref{pcg}, (\ref{pcg3}), so $Cg_{\WCL ,L}(a,b)=Cg_L(a,b)=\varepsilon 
_L(a,b)\cup \varepsilon 
_L(c,b\vee c)\cup \varepsilon 
_L(d,e)\neq L^2$, hence $x^{\Delta }\notin x/Cg_{\WCL ,L}(a,b)$ for any $x\in L$; then we are in one of the following subcases:

\noindent $\blacksquare \ $ $x^{\Delta }\notin \{a,b,c,b\vee c,d,e\}$ for any $x\in \{a,b,c,b\vee c,d,e\}$; then, since $Cg_{\WCL ,L}(a,b)$ collapses only $a$ with $b$, $c$ with $b\vee c$ and $d$ with $e$, it follows that $a^{\Delta }=b^{\Delta }$, $c^{\Delta }=(b\vee c)^{\Delta }$ and $d^{\Delta }=e^{\Delta }$;

\noindent $\blacksquare \ $ $x^{\Delta }\in \{a,b,c,b\vee c,d,e\}$ for some $x\in \{a,b,c,b\vee c,d,e\}$ and one of the following holds: $a^{\Delta }\neq b^{\Delta }$, $c^{\Delta }\neq (b\vee c)^{\Delta }$ or $d^{\Delta }\neq e^{\Delta }$; then, since $x\vee x^{\Delta }=1$ and ${\rm Max}\{a,b,c,b\vee c,d,e\}\subseteq \{b\vee c,e\}$, we have $b\vee c\vee e=1$; also, $\{a^{\Delta },b^{\Delta }, c^{\Delta },(b\vee c)^{\Delta },d^{\Delta },e^{\Delta }\}\nsubseteq \{1\}$, thus, since $\cdot ^{\Delta }$ is order--reversing, we have $(b\vee c)^{\Delta }<1$ or $e^{\Delta }<1$, in particular $\cdot ^{\Delta }$ is nontrivial since $b\vee c>a\geq 0$ and $e>d\geq 0$; hence we are in one of the following subcases:

$\bullet \ $ $b\vee c\neq 1$ and $e\neq 1$; then $1\notin \{a,b,c,b\vee c,d,e\}$, so $1/\alpha =\{1\}$, thus $(c\vee d)/\alpha =(b\vee c\vee e)/\alpha =1/\alpha =\{1\}$, so $c\vee d=1$, thus $c||d$ since $c<b\vee c\leq 1\geq e>d$, hence $c\wedge d\notin \{a,b,c,b\vee c,d,e\}$, so $(c\wedge d)/\alpha =\{c\wedge d\}$, thus $((b\vee c)\wedge e)/\alpha =(c\wedge d)/\alpha =\{c\wedge d\}$, hence $(b\vee c)\wedge e=c\wedge d$, therefore $\{c\wedge d,c,d,e,b\vee c,1\}$ is a sublattice of $L$ isomorphic to the hexagon: ${\cal C}_4\boxplus {\cal C}_4$;

$\bullet \ $ $b\vee c<1=e>d$, thus $d^{\Delta }\neq 0$ and $(d^{\Delta },0)=(d^{\Delta },e^{\Delta })\in Cg_{\WCL ,L}(a,b)=\varepsilon _L(a,b)\cup \varepsilon _L(c,b\vee c)\cup \varepsilon _L(d,e)$, hence $a=0$ and $d^{\Delta }=b\leq b\vee c$, hence $b^{\Delta }\leq d\geq (b\vee c)^{\Delta }$; but $b^{\Delta }\in a^{\Delta }/Cg_{\WCL ,L}(a,b)=1/Cg_{\WCL ,L}(a,b)=e/Cg_{\WCL ,L}(a,b)=\{d,e\}$, hence $b^{\Delta }=d$, while $b\vee c<1$ and $(b\vee c)^{\Delta }=d<1$ imply $b\vee c||d$, so $b\vee c>(b\vee c)\cap d\in ((b\vee c)\cap 1)/Cg_{\WCL ,L}(a,b)=(b\vee c)/Cg_{\WCL ,L}(a,b)=\{c,b\vee c\}$, thus $(b\vee c)\cap =c$, hence $\{0,b,c,b\vee c,d,1\}=\{a,b,c,b\vee c,d,e\}\cong {\cal C}_2\times {\cal C}_3$;

$\bullet \ $ $c<b\vee c=1>e$, thus $c^{\Delta }\neq 0$ and $(c^{\Delta },0)=(c^{\Delta },(b\vee c)^{\Delta })\in Cg_{\WCL ,L}(a,b)=\varepsilon _L(a,b)\cup \varepsilon _L(c,b\vee c)\cup \varepsilon _L(d,e)$; ${\rm Min}(\{a,b,c,b\vee c,d,e\})\subseteq \{a,d\}$, but we can't have $a=0$, because then we would get the contradiction $d,e\notin L=[0,1]_L=[a,b\vee c]_L=\{a,b,c,b\vee c\}$, thus $d=0$, hence $c^{\Delta }=e$; thus $e^{\Delta }\leq c$ and $e^{\Delta }\in a^{\Delta }/Cg_{\WCL ,L}(a,b)=0^{\Delta }/Cg_{\WCL ,L}(a,b)=1/Cg_{\WCL ,L}(a,b)=\{c,b\vee c\}$, thus $e^{\Delta }=c$; also, $e\nleq a$ since $a<c$ and $c\vee e=c\vee c^{\Delta }=1$, and $a\vee e\in (a\vee 0)/Cg_{\WCL ,L}(a,b)=a/Cg_{\WCL ,L}(a,b)=\{a,b\}$, thus $a\vee e=b$, hence $\{0,e,a,b,c,1\}=\{d,e,a,b,c,b\vee c\}\cong {\cal C}_2\times {\cal C}_3$.\end{proof}

\begin{theorem}{\rm \cite{gcze,free,eucfifin}} If $n\in \N ^*$ and $L$ is a lattice with $|L|=n$, then:\begin{itemize}
\item $|{\rm Con}(L)|\leq 2^{n-1}$;
\item $|{\rm Con}(L)|=2^{n-1}$ iff ${\rm Con}(L)\cong {\cal C}
_2^{n-1}$ iff $L\cong {\cal C}_n$;
\item if $|{\rm Con}(L)|<2^{n-1}$, then $|{\rm Con}(L)|\leq 2^{n-2}$;
\item $|{\rm Con}(L)|=2^{n-2}$ iff $n\geq 4$ and ${\rm Con}(L)\cong {\cal C}_2^{n-2}$ iff $n\geq 4$ and $L\cong {\cal C}_{n-k-2}\oplus {\cal C}_2^2\oplus {\cal C}_k$ for some $k\in [1,n-3]$;
\item if $|{\rm Con}(L)|<2^{n-2}$, then $|{\rm Con}(L)|\leq 5\cdot 2^{n-5}$;
\item $|{\rm Con}(L)|=5\cdot 2^{n-5}$ iff $n\geq 5$ and ${\rm Con}(L)\cong {\cal C}_2^{n-5}\times ({\cal C}_2\oplus {\cal C}_2^2)$ iff $n\geq 5$ and $L\cong {\cal C}_{n-k-3}\oplus {\cal N}_5\oplus {\cal C}_k$ for some $k\in [1,n-4]$;
\item if $|{\rm Con}(L)|<5\cdot 2^{n-5}$, then $|{\rm Con}(L)|\leq 2^{n-3}$;
\item $|{\rm Con}(L)|=2^{n-3}$ iff $n\geq 6$ and ${\rm Con}(L)\cong {\cal C}_2^{n-3}$ iff $n\geq 6$ and $L\cong {\cal C}_{n-k-4}\oplus ({\cal C}_2\times {\cal C}_3)\oplus {\cal C}_k$ for some $k\in [1,n-5]$ or $n\geq 7$ and $L\cong {\cal C}_{n-r-s-4}\oplus {\cal C}_2^2\oplus {\cal C}_r\oplus {\cal C}_2^2\oplus {\cal C}_s$ for some $r,s\in \N ^*$ with $r+s\leq n-5$;
\item if $|{\rm Con}(L)|<2^{n-3}$, then $|{\rm Con}(L)|\leq 7\cdot 2^{n-6}$;
\item $|{\rm Con}(L)|=7\cdot 2^{n-6}$ iff $n\geq 6$ and ${\rm Con}(L)\cong {\cal C}_2^{n-6}\times ({\cal C}_2^2\oplus {\cal C}_2^2)$ iff $n\geq 6$ and, for some $k\in [1,n-5]$, either $L\cong {\cal C}_{n-k-4}\oplus ({\cal C}_3\boxplus {\cal C}_5)\oplus {\cal C}_k$ or $L\cong {\cal C}_{n-k-4}\oplus ({\cal C}_4\boxplus {\cal C}_4)\oplus {\cal C}_k$.\end{itemize}\label{cglatfin}\end{theorem}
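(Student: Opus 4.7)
The plan is to prove the chain of bounds by strong induction on $n=|L|$, using Lemma \ref{atcg} as the main shrinking step and Lemma \ref{pcg} to classify how principal congruences generated by covers can reduce $|L|$. The base cases (say $n\leq 7$) are handled by direct enumeration of all lattices of those sizes. For the inductive step with $n\geq 8$, I fix an atom $\alpha =Cg_L(a,b)$ of ${\rm Con}(L)$ with $a\prec b$ (such an atom exists because $L$ is finite, and is principal of the stated form by congruence distributivity of lattices). Setting $k=|L|-|L/\alpha |\in \{1,2,3,\ldots \}$, Lemma \ref{atcg} gives $|{\rm Con}(L)|\leq 2|{\rm Con}(L/\alpha )|$, and the inductive bound applied to $|L/\alpha |=n-k$ yields, case by case on $k$, each of the successive upper bounds in the enunciation.

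The heart of the argument is showing that achieving each bound forces a rigid local structure. For $|{\rm Con}(L)|=2^{n-1}$: one needs both $|{\rm Con}(L)|=2|{\rm Con}(L/\alpha )|$ and $|{\rm Con}(L/\alpha )|=2^{n-2}$ for \emph{every} atom $\alpha $, which by Lemma \ref{pcg}(\ref{pcg1}) forces $k=1$ always, so every cover $a\prec b$ satisfies $a\in {\rm Mi}(L)$ and $b\in {\rm Ji}(L)$; this forces $L\cong {\cal C}_n$. For $|{\rm Con}(L)|=2^{n-2}$: by the previous case $L$ is not a chain, so some atom $\alpha $ has $k\geq 2$; Lemma \ref{pcg}(\ref{pcg2}) identifies a ${\cal C}_2^2$--interval $[a,b\vee c]_L$, and the inductive structure of $L/\alpha \cong {\cal C}_{n-2}$ together with the absence of any other ${\cal C}_2^2$--interval (otherwise we would exceed the bound) yields the ordinal--sum decomposition $L\cong {\cal C}_{n-k-2}\oplus {\cal C}_2^2\oplus {\cal C}_k$. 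For $5\cdot 2^{n-5}$, $2^{n-3}$, and $7\cdot 2^{n-6}$: an analogous analysis using the four subcases \textcircled{1}--\textcircled{4} of Lemma \ref{pcg}(\ref{pcg3}) identifies the possible five-- or six--element intervals arising in $L$---${\cal N}_5$, ${\cal C}_2\times {\cal C}_3$, two disjoint ${\cal C}_2^2$--intervals stacked in the ordinal sum, ${\cal C}_3\boxplus {\cal C}_5$ (the latter having six elements, obtained by combining subcase \textcircled{2} with an extra cover), or ${\cal C}_4\boxplus {\cal C}_4$---and in each case the quotient $L/\alpha $ is, by the inductive hypothesis, of the shape forced at the previous level, which lifts to the enunciated ordinal--sum form for $L$.

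The forward direction is routine: for each listed lattice $L$ I compute ${\rm Con}(L)$ directly using ${\rm Con}(K\oplus K')\cong {\rm Con}(K)\times {\rm Con}(K')$, the description ${\rm Con}({\cal C}_n)\cong {\cal C}_2^{n-1}$, the known ${\rm Con}({\cal C}_2^2)\cong {\cal C}_2^2$, ${\rm Con}({\cal N}_5)\cong {\cal C}_2\oplus {\cal C}_2^2$, ${\rm Con}({\cal C}_2\times {\cal C}_3)\cong {\cal C}_2^3$, and the congruence lattice computations for ${\cal C}_3\boxplus {\cal C}_5$ and ${\cal C}_4\boxplus {\cal C}_4$ via the horizontal--sum description from Remark \ref{thelatcg}.

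The main obstacle is the converse direction at the lower levels, specifically excluding competing configurations. One must rule out that some lattice $L$ of size $n$ not matching any listed form still attains e.g.\ $7\cdot 2^{n-6}$ congruences; this requires a careful case analysis based on which subcase of Lemma \ref{pcg} applies to each atom of ${\rm Con}(L)$, and a delicate bookkeeping argument showing that the quotient $L/\alpha $ inherits an ordinal--sum decomposition compatible with $L$, so that the inductive hypothesis can be applied with the correct structural conclusion lifted back to $L$. In particular, at the $7\cdot 2^{n-6}$ level one must simultaneously handle both the hexagonal subcase (horizontal sums with strictly more than one level between the atoms and coatoms) and exclude spurious configurations arising from case \textcircled{4} of Lemma \ref{pcg}(\ref{pcg3}) that would seem to yield larger congruence counts but in fact collapse under the weak--complementation--free setting.
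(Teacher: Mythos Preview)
The paper does not supply its own proof of Theorem~\ref{cglatfin}: it is quoted verbatim as a known result from \cite{gcze,free,eucfifin}, with no accompanying \texttt{proof} environment. So there is no in-paper argument to compare against.

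That said, your outline is faithful to the method actually used in those references (in particular \cite{eucfifin}): strong induction on $n$, with the doubling bound of Lemma~\ref{atcg} applied to an atom $\alpha$ of ${\rm Con}(L)$, and the classification of covers in Lemma~\ref{pcg} controlling how many elements $\alpha$ collapses. The forward computations you list (${\rm Con}(K\oplus K')\cong{\rm Con}(K)\times{\rm Con}(K')$, the small congruence lattices, and the horizontal-sum description in Remark~\ref{thelatcg}) are exactly the ingredients used there.

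Where your sketch is thin is precisely where the real work lies: in the converse direction one must show not merely that \emph{some} atom $\alpha$ yields a quotient of the right shape, but that the local ${\cal C}_2^2$-, ${\cal N}_5$-, or hexagon-interval identified by Lemma~\ref{pcg} is the \emph{only} non-chain feature of $L$, so that the rest of $L$ is a chain glued above and below it. This is done in \cite{eucfifin} by arguing that any further deviation from a chain would introduce a second atom $\beta$ of ${\rm Con}(L)$ with $|L/\beta|\leq n-2$, forcing $|{\rm Con}(L)|\leq 2|{\rm Con}(L/\beta)|$ to fall below the target; your phrase ``delicate bookkeeping'' gestures at this but does not pin down the mechanism. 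Likewise, at the $2^{n-3}$ level the two families (one ${\cal C}_2\times{\cal C}_3$ block versus two separated ${\cal C}_2^2$ blocks) arise from genuinely different atom structures, and distinguishing them requires tracking whether the two ${\cal C}_2^2$-intervals produced by case~\textcircled{4} of Lemma~\ref{pcg}(\ref{pcg3}) share an edge or not---a point your sketch leaves implicit.
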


\begin{theorem} For any $n\in \N ^*$, any lattice $L$ with $|L|=n$ and any weak complementation $\cdot ^{\Delta }$ on $L$, we have:\begin{enumerate}
\item\label{cgwclfin1} $|{\rm Con}_{\WCL }(L,\cdot ^{\Delta })|\leq 2^{n-2}+1$;
\item\label{cgwclfin2} $|{\rm Con}_{\WCL }(L,\cdot ^{\Delta })|=2^{n-2}+1$ iff ${\rm Con}_{\WCL }(L,\cdot ^{\Delta })\cong {\cal C}_2^{n-2}\oplus {\cal C}_2$ iff $n\geq 2$ and $L\cong {\cal C}_n$;
\item\label{cgwclfin3} $|{\rm Con}_{\WCL }(L,\cdot ^{\Delta })|=2^{n-2}$ iff $n=4$ and ${\rm Con}_{\WCL }(L,\cdot ^{\Delta })\cong {\cal C}_2^2$ iff $L\cong {\cal C}_2^2$ and $\cdot ^{\Delta }$ is the Boolean complementation;
\item\label{cgwclfin4} $|{\rm Con}_{\WCL }(L,\cdot ^{\Delta })|<2^{n-2}$ iff $|{\rm Con}_{\WCL }(L,\cdot ^{\Delta })|\leq 2^{n-3}+1$;
\item\label{cgwclfin5} $|{\rm Con}_{\WCL }(L,\cdot ^{\Delta })|=2^{n-3}+1$ iff ${\rm Con}_{\WCL }(L,\cdot ^{\Delta })\cong {\cal C}_2^{n-3}\oplus {\cal C}_2$ iff $n\geq 5$ and $L\cong {\cal C}_{n-k-2}\oplus {\cal C}_2^2\oplus {\cal C}_k$ for some $k\in [2,n-3]$;
\item\label{cgwclfin6} $|{\rm Con}_{\WCL }(L,\cdot ^{\Delta })|=3\cdot 2^{n-5}$ iff $n=5$ and ${\rm Con}_{\WCL }(L,\cdot ^{\Delta })\cong {\cal C}_3$ or $n=6$ and ${\rm Con}_{\WCL }(L,\cdot ^{\Delta })\cong {\cal C}_2\times {\cal C}_3$ iff $L\cong {\cal N}_5$ or $L\cong {\cal C}_2\times {\cal C}_3$ and $\cdot ^{\Delta }$ is the direct product of the trivial weak complementations on the chains ${\cal C}_2$ and ${\cal C}_3$;
\item\label{cgwclfin7} if $L\ncong {\cal N}_5$ and $(L,\cdot ^{\Delta })\ncong _{\WCL }({\cal C}_2,\cdot ^{\Delta {\cal C}_2})\times ({\cal C}_3,\cdot ^{\Delta {\cal C}_3})$, then: $|{\rm Con}_{\WCL }(L,\cdot ^{\Delta })|\leq 2^{n-3}$ iff $|{\rm Con}_{\WCL }(L,\cdot ^{\Delta })|\leq 5\cdot 2^{n-6}+1$;
\item\label{cgwclfin8} $|{\rm Con}_{\WCL }(L,\cdot ^{\Delta })|=5\cdot 2^{n-6}+1$ iff ${\rm Con}_{\WCL }(L,\cdot ^{\Delta })\cong ({\cal C}_2^{n-6}\times ({\cal C}_2\oplus {\cal C}_2^2))\oplus {\cal C}_2$ iff $n\geq 6$ and $L\cong {\cal C}_{n-k-3}\oplus {\cal N}_5\oplus {\cal C}_k$ for some $k\in [2,n-4]$;
\item\label{cgwclfin9} $|{\rm Con}_{\WCL }(L,\cdot ^{\Delta })|=5\cdot 2^{n-6}$ iff $n=6$ and either ${\rm Con}_{\WCL }(L,\cdot ^{\Delta })\cong {\cal C}_2\oplus {\cal C}_2^2$ or ${\rm Con}_{\WCL }(L,\cdot ^{\Delta })\cong {\cal C}_2^2\oplus {\cal C}_2$ iff one of the following holds:

$\bullet \ $ $L\cong {\cal C}_2\times {\cal C}_3$ and $\cdot ^{\Delta }$ is neither trivial nor equal to the direct product of the weak complementations on the chains ${\cal C}_2$ and ${\cal C}_3$ (case in which ${\rm Con}_{\WCL }(L,\cdot ^{\Delta })\cong {\cal C}_2\oplus {\cal C}_2^2$);

$\bullet \ $  $L\cong {\cal C}_3\boxplus {\cal C}_5$ or $L\cong {\cal C}_4\boxplus {\cal C}_4$ and $\cdot ^{\Delta }$ is trivial (case in which ${\rm Con}_{\WCL }(L,\cdot ^{\Delta })\cong {\cal C}_2^2\oplus {\cal C}_2$);

\item\label{cgwclfin10} $|{\rm Con}_{\WCL }(L,\cdot ^{\Delta })|<5\cdot 2^{n-6}$ iff $|{\rm Con}_{\WCL }(L,\cdot ^{\Delta })|\leq 2^{n-4}+1$;
\item\label{cgwclfin11} $|{\rm Con}_{\WCL }(L,\cdot ^{\Delta })|=2^{n-4}+1$ iff $n\geq 5$ and ${\rm Con}_{\WCL }(L,\cdot ^{\Delta })\cong {\cal C}_2^{n-4}\oplus {\cal C}_2$ iff one of the following holds:

$\bullet \ $ $n\geq 5$, $L\cong {\cal C}_{n-r-s+3}\oplus ({\cal C}_r\boxplus {\cal C}_s)$ for some $r,s\in \N \setminus \{0,1,2\}$ such that $r+s\leq n+1$ and, if $r+s>6$ (that is if $L\ncong {\cal C}_{n-3}\oplus {\cal C}_2^2$), then $\cdot ^{\Delta }$ is trivial;

$\bullet \ $ $n\geq 7$ and $L\cong {\cal C}_{n-k-4}\oplus ({\cal C}_2\times {\cal C}_3)\oplus {\cal C}_k$ for some $k\in [2,n-5]$;

$\bullet \ $ $n\geq 8$ and $L\cong {\cal C}_{n-r-s-4}\oplus {\cal C}_2^2\oplus {\cal C}_r\oplus {\cal C}_2^2\oplus {\cal C}_s$ for some $r,s\in \N ^*$ such that $s>1$ and $r+s\leq n-5$.\end{enumerate}\label{cgwclfin}\end{theorem}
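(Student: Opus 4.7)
The plan is to piggyback on Theorem \ref{cglatfin}, the lattice analogue of this result, and descend through its list of extremal lattice structures, at each level determining which weak complementations $\cdot^\Delta$ are admissible on the given lattice $L$ and computing the resulting size of ${\rm Con}_{\WCL}(L,\cdot^\Delta)$. The crucial fact is that ${\rm Con}_{\WCL}(L,\cdot^\Delta)$ is a sublattice of ${\rm Con}(L)$, so the bounds of Theorem \ref{cglatfin} apply verbatim; Proposition \ref{cgordsum} and the structural results of Sections \ref{coatomic} and \ref{hsums} then let me pass from $|{\rm Con}(L)|$ to $|{\rm Con}_{\WCL}(L,\cdot^\Delta)|$ for each candidate structure.

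For items (\ref{cgwclfin1})--(\ref{cgwclfin3}): a chain $L\cong{\cal C}_n$ has $1\in{\rm Ji}(L)$, so by Section \ref{coatomic} it admits only the trivial weak complementation, giving $|{\rm Con}_{\WCL}(L,\cdot^{\Delta L})|=|{\rm Con}_1(L)|+1=2^{n-2}+1$. For any non--chain $L$ of size $n$, Theorem \ref{cglatfin} yields $|{\rm Con}(L)|\leq 2^{n-2}$ and hence $|{\rm Con}_{\WCL}(L,\cdot^\Delta)|\leq 2^{n-2}$. Equality forces $|{\rm Con}_{\WCL}(L,\cdot^\Delta)|=|{\rm Con}(L)|$ with $L\cong{\cal C}_{n-k-2}\oplus{\cal C}_2^2\oplus{\cal C}_k$, and then Proposition \ref{cgordsum} combined with Remark \ref{onWCLcg} (applied to the two coatoms of the ${\cal C}_2^2$ block) pins this down to $n=4$, $L\cong{\cal C}_2^2$, and $\cdot^\Delta$ the Boolean complementation.

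For items (\ref{cgwclfin4})--(\ref{cgwclfin11}): I walk the list of Theorem \ref{cglatfin} in decreasing order, handling structures $L=M_1\oplus N\oplus M_2$ with $N\in\{{\cal C}_2^2,{\cal N}_5,{\cal C}_2\times{\cal C}_3,{\cal C}_r\boxplus{\cal C}_s\}$ and the iterated form with two ${\cal C}_2^2$ blocks. Iterating Proposition \ref{cgordsum} factors ${\rm Con}_{\WCL}(L,\cdot^\Delta)$ as an ordinal sum whose top factor involves ${\rm Con}_{\WCL,1}$ of the topmost nontrivial summand; the latter is computed via Theorem \ref{mainth}, Proposition \ref{cghsumnontriv} and Example \ref{C2xC3}, while the admissible weak complementations on that summand are enumerated using Section \ref{coatomic}. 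To exclude intermediate values, I combine Lemma \ref{atcg} with Lemma \ref{pWCLcg}: every atom of ${\rm Con}_{\WCL}(L,\cdot^\Delta)$ is a principal $\WCL$--congruence $Cg_{\WCL,L}(a,b)$ for some $a\prec b$, and Lemma \ref{pWCLcg} classifies such atoms by the number of elements they collapse, so I can rule out each candidate value strictly between consecutive items of the main sequence.

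The main obstacle lies in the bottom tier, items (\ref{cgwclfin10})--(\ref{cgwclfin11}), where three structural families meet at the value $2^{n-4}+1$: iterated ordinal sums with two ${\cal C}_2^2$ blocks, ordinal sums containing ${\cal C}_2\times{\cal C}_3$ (with four weak complementations per Example \ref{C2xC3}), and ordinal sums with a horizontal summand ${\cal C}_r\boxplus{\cal C}_s$ of size $r+s-2\in\{4,5\}$ (with at most two weak complementations per Theorem \ref{mainth}). Each family must be cross--referenced against its list of weak complementations and against the constraint that the top ordinal summand be long enough to force triviality of $\cdot^\Delta$ on it. Verifying that no value between $5\cdot 2^{n-6}$ and $2^{n-4}+1$ is missed for $n\geq 7$, and that the small--$n$ exceptions appearing in (\ref{cgwclfin6}) and (\ref{cgwclfin9}) are exactly as stated, is the most delicate and bookkeeping--heavy part of the argument.
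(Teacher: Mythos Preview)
Your approach is essentially the paper's: both compute ${\rm Con}_{\WCL}$ for the candidate ordinal-sum structures via Proposition~\ref{cgordsum} and the results of Sections~\ref{coatomic}--\ref{hsums}, and both use the atom--quotient device (Lemma~\ref{atcg} together with Lemma~\ref{pWCLcg}) to rule out everything else.

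One clarification is worth making. You frame the argument as ``piggyback on Theorem~\ref{cglatfin}'', with the atom--quotient technique relegated to excluding intermediate values. This works cleanly for items (\ref{cgwclfin1})--(\ref{cgwclfin5}), where the target values exceed $2^{n-3}$ and Theorem~\ref{cglatfin} hands you a finite list of lattice structures to check. But for items (\ref{cgwclfin7})--(\ref{cgwclfin11}) the targets $5\cdot 2^{n-6}+1$ and $2^{n-4}+1$ lie strictly below the last classified threshold $7\cdot 2^{n-6}$ of Theorem~\ref{cglatfin}, so that theorem no longer furnishes a complete list of candidate lattices. At those levels the atom--quotient argument is the primary engine, not a supplement: the paper sets up an explicit induction on $n$ (with base cases $n\le 8$ checked by hand), applies the induction hypothesis to $(L/\alpha,\cdot^{\Delta[\alpha]})$, whose size is $n-1$, $n-2$, or $n-3$ according to Lemma~\ref{pWCLcg}, and then reconstructs $L$ from $L/\alpha$ and the shape of $\alpha$. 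This same inductive pass simultaneously excludes intermediate values \emph{and} establishes the characterisation direction of (\ref{cgwclfin8}) and (\ref{cgwclfin11}). Your outline contains all the ingredients for this, but you should make the induction explicit rather than presenting it as a secondary exclusion tool.
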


\begin{proof} For any bounded lattice $L$, any weak complementation $\cdot ^{\Delta }$ on $L$ and any $\alpha \in {\rm Con}_{\WCL }(L,\cdot ^{\Delta })$, we will denote by $(L/\alpha ,\cdot ^{\Delta [\alpha ]})=(L,\cdot ^{\Delta })/\alpha $. Of course, the trivial weak complementation $\cdot ^{\Delta L/\alpha }=\cdot ^{\Delta L[\alpha ]}$. 

For any bounded lattice $L$ with exactly two coatoms $a$ and $b$ such that $\cdot ^{\Delta a,b}$ is a weak complementation on $L$, we denote by $\cdot ^{\Delta _{{\rm CoAt}(L)}}=\cdot ^{\Delta a,b}$.

Recall from Section \ref{ordsum} and Theorem \ref{mainth} that, for any bounded lattices $K$, $L$, $M$ such that $|L|,|M|>3$:\begin{itemize}
\item if $1\notin {\rm Sji}(L)\cap {\rm Sji}(M)$, then $K\oplus (L\boxplus M)$ can only be endowed with the trivial weak complementation $\cdot ^{\Delta K\oplus (L\boxplus M)}$;
\item if $1\in {\rm Sji}(L)\cap {\rm Sji}(M)$, then the only nontrivial weak complementation on $K\oplus (L\boxplus M)$ is $\cdot ^{\Delta _{{\rm CoAt}(K\oplus (L\boxplus M))}}$.\end{itemize}

Thus, for any bounded lattice $K$, $\cdot ^{\Delta _{{\rm CoAt}(K\oplus {\cal C}_2^2)}}$ is the only nontrivial weak complementation on $K\oplus {\cal C}_2^2$, which restricts to the Boolean complementation on ${\cal C}_2^2$, and $\cdot ^{\Delta _{{\rm CoAt}(K\oplus {\cal N}_5)}}$ is the only nontrivial weak complementation on $K\oplus {\cal N}_5$.

Recall from Section \ref{ordsum} that any bounded lattice $L$ with the $1$ strictly join--irreducible can only be endowed with the trivial weak complementation $\cdot ^{\Delta L}$.

Recall, also, that, for any bounded lattice $L$, ${\rm Con}_{\WCL }(L,\cdot ^{\Delta L})={\rm Con}_1(L)\cup \{L^2\}\cong {\rm Con}_1(L)\oplus {\cal C}_2$.

Hence, for every $n\in \N ^*$, ${\cal C}_n$ can only be endowed with the trivial weak complementation $\cdot ^{\Delta {\cal C}_n}$ and ${\rm Con}_{\WCL }({\cal C}_n,\linebreak \cdot ^{\Delta {\cal C}_n})={\rm Con}_1({\cal C}_n)\cup \{{\cal C}_n^2\}$, thus ${\rm Con}_{\WCL }({\cal C}_1,\cdot ^{\Delta {\cal C}_1})\cong {\cal C}_1$ and, if $n\geq 2$, then ${\rm Con}_{\WCL }({\cal C}_n,\cdot ^{\Delta {\cal C}_n})\cong {\rm Con}({\cal C}_{n-1})\oplus {\cal C}_2\cong {\cal C}_2^{n-2}\oplus {\cal C}_2$, hence $|{\rm Con}_{\WCL }({\cal C}_1,\cdot ^{\Delta {\cal C}_1})|=1<2^{-1}+1$ and, if $n\geq 2$, then $|{\rm Con}_{\WCL }({\cal C}_n,\cdot ^{\Delta {\cal C}_n})|=2^{n-2}+1$.

Also, ${\cal C}_2^2\oplus {\cal C}_2$ has the $1$ join--irreducible, thus it can only be endowed with the trivial weak complementation, and ${\rm Con}_{\WCL }({\cal C}_2^2\oplus {\cal C}_2,\cdot ^{\Delta {\cal C}_2^2\oplus {\cal C}_2})={\rm Con}_1({\cal C}_2^2\oplus {\cal C}_2)\cup \{({\cal C}_2^2\oplus {\cal C}_2)^2\}\cong {\cal C}_2^2\oplus {\cal C}_2$, so $|{\rm Con}_{\WCL }({\cal C}_2^2\oplus {\cal C}_2,\cdot ^{\Delta {\cal C}_2^2\oplus {\cal C}_2})|=2^2+1$.

By the above, ${\cal M}_3\cong {\cal C}_3\boxplus {\cal C}_3\boxplus {\cal C}_3\cong {\cal C}_3\boxplus {\cal C}_2^2$ can only be endowed with $\cdot ^{\Delta {\cal M}_3}$, and ${\rm Con}_{\WCL }({\cal M}_3,\cdot ^{\Delta {\cal M}_3})={\rm Con}_1({\cal M}_3)\cup \{{\cal M}_3^2\}\cong {\cal C}_1\oplus {\cal C}_2\cong {\cal C}_2$, so $|{\rm Con}_{\WCL }({\cal M}_3,\cdot ^{\Delta {\cal M}_3})|=2^1$.

In what follows, as above, we will use the remarks in Section \ref{ordsum} and Theorem \ref{mainth} to determine the weak complementations on the following lattices and Propositions \ref{cgordsum} and \ref{cghsumnontriv} to determine the congruences of the weakly complemented lattices formed with those weak complementations. The cases not covered by these results need to be verified directly.

${\rm Con}_{\WCL }({\cal C}_2^2,\cdot ^{\Delta {\cal C}_2^2})={\rm Con}_1({\cal C}_2^2)\cup \{({\cal C}_2^2)^2\}\cong {\cal C}_1\oplus {\cal C}_2\cong {\cal C}_2$, so $|{\rm Con}_{\WCL }({\cal C}_2^2,\cdot ^{\Delta {\cal C}_2^2})|=2^1=2^0+1$, while ${\rm Con}_{\WCL }({\cal C}_2^2,\cdot ^{\Delta _{{\rm CoAt}({\cal C}_2^2)}})\cong {\cal C}_2^2$ since $({\cal C}_2^2,\cdot ^{\Delta _{{\rm CoAt}({\cal C}_2^2})})$ is a Boolean algebra, so $|{\rm Con}_{\WCL }({\cal C}_2^2,\cdot ^{\Delta _{{\rm CoAt}({\cal C}_2^2})})|=2^2$.

${\rm Con}_{\WCL }({\cal C}_2\oplus {\cal C}_2^2,\cdot ^{\Delta {\cal C}_2\oplus {\cal C}_2^2})={\rm Con}_1({\cal C}_2\oplus {\cal C}_2^2)\cup \{({\cal C}_2\oplus {\cal C}_2^2)^2\}\cong {\cal C}_2\oplus {\cal C}_2\cong {\cal C}_3$, so $|{\rm Con}_{\WCL }({\cal C}_2\oplus {\cal C}_2^2,\cdot ^{\Delta {\cal C}_2\oplus {\cal C}_2^2})|=2^1+1$, while ${\rm Con}_{\WCL }({\cal C}_2\oplus {\cal C}_2^2,\cdot ^{\Delta _{{\rm CoAt}({\cal C}_2\oplus {\cal C}_2^2)}})\cong ({\rm Con}({\cal C}_2)\times {\rm Con}_{\WCL 1}({\cal C}_2^2,\cdot ^{\Delta _{{\rm CoAt}({\cal C}_2^2)}})\oplus {\cal C}_2\cong ({\cal C}_2\times {\cal C}_1)\oplus {\cal C}_2\cong {\cal C}_2\oplus {\cal C}_2\cong {\cal C}_3$, so $|{\rm Con}_{\WCL }({\cal C}_2\oplus {\cal C}_2^2,\cdot ^{\Delta _{{\rm CoAt}({\cal C}_2\oplus {\cal C}_2^2)}})|=2^1+1$.

Recall from Example \ref{exhsums} that ${\rm Con}_{\WCL }({\cal N}_5,\cdot ^{\Delta {\cal N}_5})={\rm Con}_{\WCL }({\cal N}_5,\cdot ^{\Delta _{{\rm CoAt}({\cal N}_5)}})\cong {\cal C}_3$, thus $|{\rm Con}_{\WCL }({\cal N}_5,\cdot ^{\Delta {\cal N}_5})|=|{\rm Con}_{\WCL }({\cal N}_5,\cdot ^{\Delta _{{\rm CoAt}({\cal N}_5)}})|=3$, while ${\rm Con}_{\WCL }({\cal C}_3\boxplus {\cal C}_5,\cdot ^{\Delta {\cal C}_3\boxplus {\cal C}_5})={\rm Con}_{\WCL }({\cal C}_3\boxplus {\cal C}_5,\cdot ^{\Delta _{{\rm CoAt}({\cal C}_3\boxplus {\cal C}_5)}})\cong {\cal C}_2^2\oplus {\cal C}_2\cong {\rm Con}_{\WCL }({\cal C}_4\boxplus {\cal C}_4,\cdot ^{\Delta {\cal C}_4\boxplus {\cal C}_4})={\rm Con}_{\WCL }({\cal C}_4\boxplus {\cal C}_4,\cdot ^{\Delta _{{\rm CoAt}({\cal C}_4\boxplus {\cal C}_4)}})$, thus $|{\rm Con}_{\WCL }({\cal C}_3\boxplus {\cal C}_5,\cdot ^{\Delta {\cal C}_3\boxplus {\cal C}_5})|=|{\rm Con}_{\WCL }({\cal C}_3\boxplus {\cal C}_5,\cdot ^{\Delta _{{\rm CoAt}({\cal C}_3\boxplus {\cal C}_5)}})|=|{\rm Con}_{\WCL }({\cal C}_4\boxplus {\cal C}_4,\cdot ^{\Delta {\cal C}_4\boxplus {\cal C}_4})|=|{\rm Con}_{\WCL }({\cal C}_4\boxplus {\cal C}_4,\cdot ^{\Delta _{{\rm CoAt}({\cal C}_4\boxplus {\cal C}_4)}})|=5$.

See the weak complementations on ${\cal C}_2\times {\cal C}_3$ and their corresponding congruence lattices in Example \ref{C2xC3}.

By Theorem \ref{cglatfin} and some more quick verifications:\begin{itemize}
\item the six--element lattices with at least $2^2+1=5$ lattice congruences are ${\cal C}_6$, ${\cal C}_2\times {\cal C}_3$, ${\cal C}_2^2\oplus {\cal C}_3$, ${\cal C}_2\oplus {\cal C}_2^2\oplus {\cal C}_2$, ${\cal C}_3\oplus {\cal C}_2^2$, ${\cal N}_5\oplus {\cal C}_2$ and ${\cal C}_2\oplus {\cal N}_5$, ${\cal C}_3\boxplus {\cal C}_5$ and ${\cal C}_4\boxplus {\cal C}_4$;
\item the seven--element lattices with at least $2^3+1=9$ lattice congruences are ${\cal C}_7$, $({\cal C}_2\times {\cal C}_3)\oplus {\cal C}_2$, ${\cal C}_2\oplus ({\cal C}_2\times {\cal C}_3)$, ${\cal C}_2^2\oplus {\cal C}_4$, ${\cal C}_2\oplus {\cal C}_2^2\oplus {\cal C}_3$, ${\cal C}_3\oplus {\cal C}_2^2\oplus {\cal C}_2$, ${\cal C}_4\oplus {\cal C}_2^2$, ${\cal C}_2^2\oplus {\cal C}_2^2$, ${\cal N}_5\oplus {\cal C}_3$, ${\cal C}_2\oplus {\cal N}_5\oplus {\cal C}_2$, ${\cal C}_3\oplus {\cal N}_5$, $({\cal C}_3\boxplus {\cal C}_5)\oplus {\cal C}_2$, ${\cal C}_2\oplus ({\cal C}_3\boxplus {\cal C}_5)$, $({\cal C}_4\boxplus {\cal C}_4)\oplus {\cal C}_2$, ${\cal C}_2\oplus ({\cal C}_4\boxplus {\cal C}_4)$, ${\cal C}_3\boxplus {\cal C}_6$ and ${\cal C}_4\boxplus {\cal C}_5$;
\item the eight--element lattices with at least $2^4+1=17$ lattice congruences are ${\cal C}_8$, ${\cal C}_2\times {\cal C}_4$, $({\cal C}_2\times {\cal C}_3)\oplus {\cal C}_3$, ${\cal C}_2\oplus ({\cal C}_2\times {\cal C}_3)\oplus {\cal C}_2$, ${\cal C}_3\oplus ({\cal C}_2\times {\cal C}_3)$, ${\cal C}_2^2\oplus {\cal C}_5$, ${\cal C}_2\oplus {\cal C}_2^2\oplus {\cal C}_4$, ${\cal C}_3\oplus {\cal C}_2^2\oplus {\cal C}_3$, ${\cal C}_4\oplus {\cal C}_2^2\oplus {\cal C}_2$, ${\cal C}_5\oplus {\cal C}_2^2$, ${\cal C}_2^2\oplus {\cal C}_2^2\oplus {\cal C}_2$, ${\cal C}_2^2\oplus {\cal C}_2\oplus {\cal C}_2^2$, ${\cal C}_2\oplus {\cal C}_2^2\oplus {\cal C}_2^2$, ${\cal N}_5\oplus {\cal C}_4$, ${\cal C}_2\oplus {\cal N}_5\oplus {\cal C}_3$, and ${\cal C}_3\oplus {\cal N}_5\oplus {\cal C}_2$, ${\cal C}_4\oplus {\cal N}_5$, ${\cal C}_2^2\oplus {\cal N}_5$, ${\cal N}_5\oplus {\cal C}_2^2$, $({\cal C}_3\boxplus {\cal C}_5)\oplus {\cal C}_3$, ${\cal C}_2\oplus ({\cal C}_3\boxplus {\cal C}_5)\oplus {\cal C}_2$, ${\cal C}_3\oplus ({\cal C}_3\boxplus {\cal C}_5)$, $({\cal C}_4\boxplus {\cal C}_4)\oplus {\cal C}_3$, ${\cal C}_2\oplus ({\cal C}_4\boxplus {\cal C}_4)\oplus {\cal C}_2$, ${\cal C}_3\oplus ({\cal C}_4\boxplus {\cal C}_4)$, $({\cal C}_3\boxplus {\cal C}_6)\oplus {\cal C}_2$, ${\cal C}_2\oplus ({\cal C}_3\boxplus {\cal C}_6)$, $({\cal C}_4\boxplus {\cal C}_5)\oplus {\cal C}_2$ and ${\cal C}_2\oplus ({\cal C}_4\boxplus {\cal C}_5)$, ${\cal C}_3\boxplus {\cal C}_7$, ${\cal C}_4\boxplus {\cal C}_6$ and ${\cal C}_5\boxplus {\cal C}_5$;\end{itemize} 

\noindent and, with calculations similar to the above, one can easily check that all these lattices satisfy the statements in the enunciation. Note that, out of all the weakly complemented lattices $(L,\cdot ^{\Delta })$ above, the only one that has exactly $2^{|L|-2}$ congruences is $({\cal C}_2^2,\cdot ^{\Delta _{{\rm CoAt}({\cal C}_2^2)}})$, the only ones that have exactly $3\cdot 2^{|L|-5}$ congruences are ${\cal N}_5$ endowed with any of its weak complementations and ${\cal C}_2\times {\cal C}_3$ endowed with the direct product of $\cdot ^{\Delta {\cal C}_2}$ with $\cdot ^{\Delta {\cal C}_3}$ and the only ones that have exactly $5\cdot 2^{|L|-6}$ congruences are ${\cal C}_2\times {\cal C}_3$ endowed with one of the weak complementations $\cdot ^{\Delta a,b}$ and $\cdot ^{\Delta a,b,b}$ in Example \ref{C2xC3} (which has the congruence lattice isomorphic to ${\cal C}_2\oplus {\cal C}_2^2$) and ${\cal C}_3\boxplus {\cal C}_5$ and ${\cal C}_4\boxplus {\cal C}_4$ endowed with their trivial weak complementations (having the congruence lattices isomorphic to ${\cal C}_2^2\oplus {\cal C}_2$).

Now let $n\in \N $ such that $n\geq 9$ and assume that the statements in the enunciation hold for any lattice whose cardinality is at most $n-1$.

Let $(L,\cdot ^{\Delta })$ be a weakly complemented lattice with $|L|=n$ and $\alpha \in {\rm At}({\rm Con}_{\WCL }(L,\cdot ^{\Delta }))\subseteq {\rm Con}_{\WCL }(L,\cdot ^{\Delta })\setminus \{\mbox{\bf =}_L\}$, so that $|L/\alpha |\leq n-1$ and, according to Remark \ref{onWCLcg}, $\alpha =Cg_{\WCL ,L}(a,b)$ for some $a,b\in L$ with $a\prec b$.

\noindent (\ref{cgwclfin1}) By the induction hypothesis, $|{\rm Con}_{\WCL }(L/\alpha ,\cdot ^{\Delta [\alpha ]})|\leq 2^{n-3}+1$ and thus $|{\rm Con}_{\WCL }(L,\cdot ^{\Delta })|\leq 2^{n-2}+2$ according to Lemma \ref{atcg}.

Assume by absurdum that $|{\rm Con}_{\WCL }(L)|=2^{n-2}+2$. Then, by Lemma \ref{atcg}, it follows that $|{\rm Con}_{\WCL }(L/\alpha ,\cdot ^{\Delta [\alpha ]})|\linebreak \geq 2^{n-3}+1$, hence $|{\rm Con}_{\WCL }(L/\alpha,\cdot ^{\Delta [\alpha ]})|=2^{n-3}+1$ and thus $L/\alpha \cong {\cal C}_{n-1}$ by the induction hypothesis, in particular $|L/\alpha |=n-1=|L|-1$, thus, by Lemma \ref{pWCLcg}, (\ref{pWCLcg1}), $a\in {\rm Mi}(L)$, $b\in {\rm Ji}(L)$, $a^{\Delta }=b^{\Delta }$ and $\alpha =Cg_L(a,b)=\varepsilon _L(a,b)$.

Since $L/\alpha $ is a chain, it follows that, for any $x,y\in L\setminus \{a,b\}$, $x/\alpha =\{x\}$ is comparable to $y/\alpha =\{y\}$ and to $a/\alpha =b/\alpha =\{a,b\}$, hence $x$ is comparable to $y$ and to at least one of $a$ and $b$. Assume by absurdum that $x||a$ or $x||b$. W.l.g. we may assume that $x||b$. Then, by the above, $x<b$, $x>a\wedge x<a$ and $a<a\vee x\leq b$, thus $a\vee x=b$, hence $\{a\wedge x,a,x,b\}$ is a sublattice of $L$ isomorphic to ${\cal C}_2^2$, therefore $(a\wedge x,x)\in L/\alpha $, which contradicts the fact $\alpha $ only collapses $a$ with $b$. Hence $x$ is comparable to each of $a$ and $b$.

Therefore $L\cong {\cal C}_n$, thus $\cdot ^{\Delta }=\cdot ^{\Delta L}$ and ${\rm Con}_{\WCL }(L,\cdot ^{\Delta })={\rm Con}_{\WCL }(L,\cdot ^{\Delta L})\cong {\rm Con}_{\WCL }({\cal C}_n,\cdot ^{\Delta {\cal C}_n})\cong {\cal C}_2^{n-2}\oplus {\cal C}_2$, thus $|{\rm Con}_{\WCL }(L,\cdot ^{\Delta L})|=2^{n-2}+1$, contradicting the above.

Hence $|{\rm Con}_{\WCL }(L,\cdot ^{\Delta })|\leq 2^{n-2}+1$.

In what follows, one can reason as above on the lattice structure of $L$ based on that of $L/\alpha $ and the form of $\alpha $; we will skip such details in the rest of this proof.

\noindent (\ref{cgwclfin2}) Assume that $|{\rm Con}_{\WCL }(L,\cdot ^{\Delta })|=2^{n-2}+1$. By Theorem \ref{cglatfin}, if $L$ were not a chain, then $|{\rm Con}(L)|\leq 2^{n-2}<|{\rm Con}_{\WCL }(L,\cdot ^{\Delta })|$, which would contradict the fact that ${\rm Con}_{\WCL }(L,\cdot ^{\Delta })\subseteq {\rm Con}(L)$.

Hence, by the proof of (\ref{cgwclfin1}), $|{\rm Con}_{\WCL }(L,\cdot ^{\Delta })|=2^{n-2}+1$ exactly when $L\cong {\cal C}_n$.

\noindent (\ref{cgwclfin3}) Assume by absurdum that $|{\rm Con}_{\WCL }(L,\cdot ^{\Delta })|=2^{n-2}<2^{n-2}+1$, so that $L\ncong {\cal C}_n$ by (\ref{cgwclfin2}), thus $|{\rm Con}(L)|\leq 2^{n-2}$ by Theorem \ref{cglatfin}, hence ${\rm Con}(L)={\rm Con}_{\WCL }(L,\cdot ^{\Delta })$, in particular $|{\rm Con}(L)|=|{\rm Con}_{\WCL }(L,\cdot ^{\Delta })|=2^{n-2}$, therefore, again by Theorem \ref{cglatfin}, $L\cong {\cal C}_{n-k-2}\oplus {\cal C}_2^2\oplus {\cal C}_k$ for some $k\in [1,n-3]$.

If $\cdot ^{\Delta }=\cdot ^{\Delta L}$, then ${\rm Con}_{\WCL }(L,\cdot ^{\Delta })={\rm Con}_{\WCL }(L,\cdot ^{\Delta L})={\rm Con}_1(L)\cup \{L^2\}\neq {\rm Con}(L)$, which contradicts the above. Hence $\cdot ^{\Delta }\neq \cdot ^{\Delta L}$, thus $k=1$, so $L\cong {\cal C}_{n-3}\oplus {\cal C}_2^2$, and $\cdot ^{\Delta }=\cdot ^{\Delta _{{\rm CoAt}({\cal C}_{n-3}\oplus {\cal C}_2^2)}}$. Then, since $n\geq 9>4$ and thus ${\cal C}_{n-3}$ is nontrivial, ${\rm Con}_{\WCL }(L,\cdot ^{\Delta })={\rm Con}_{\WCL }(L,\cdot ^{\Delta _{{\rm CoAt}({\cal C}_{n-3}\oplus {\cal C}_2^2)}})\cong ({\rm Con}({\cal C}_{n-3})\times {\rm Con}_{\WCL 1}({\cal C}_2^2,\cdot ^{\Delta _{{\rm CoAt}({\cal C}_2^2)}}))\oplus {\cal C}_2\cong ({\cal C}_2^{n-4}\times {\cal C}_1)\oplus {\cal C}_2\cong {\cal C}_2^{n-4}\oplus {\cal C}_2$, so $|{\rm Con}_{\WCL }(L,\cdot ^{\Delta })|=2^{n-4}+1\neq 2^{n-2}$ and we have another contradiction.

Therefore $|{\rm Con}_{\WCL }(L,\cdot ^{\Delta })|\neq 2^{n-2}$.

\noindent (\ref{cgwclfin4}) Assume that $|{\rm Con}_{\WCL }(L,\cdot ^{\Delta })|<2^{n-2}+1$, so that $|{\rm Con}_{\WCL }(L,\cdot ^{\Delta })|<2^{n-2}$ by (\ref{cgwclfin3}) and $L\ncong {\cal C}_n$ by (\ref{cgwclfin2}).

Assume by absurdum that $|{\rm Con}_{\WCL }(L,\cdot ^{\Delta })|\geq 2^{n-3}+2$, so that, by Lemma \ref{atcg}, $|{\rm Con}_{\WCL }(L/\alpha ,\cdot ^{\Delta [\alpha ]})|\geq 2^{n-4}+1>4=2^{4-2}$ since $n\geq 9$, thus, by the induction hypothesis, we are in one of the following cases:

\noindent $\blacksquare \ $ $|{\rm Con}_{\WCL }(L/\alpha ,\cdot ^{\Delta [\alpha ]})|=2^{n-3}+1$, but then, as in (\ref{cgwclfin1}), we would obtain that $L\cong {\cal C}_n$, contradicting the above;

\noindent $\blacksquare \ $ $|{\rm Con}_{\WCL }(L/\alpha ,\cdot ^{\Delta [\alpha ]})|=2^{n-4}+1$, hence we are in one of the following subcases:

$\bullet \ $ $|L/\alpha |=n-1$, so that $\alpha $ is as in Lemma \ref{pWCLcg}, (\ref{pWCLcg1}), and $L/\alpha \cong {\cal C}_{n-k-3}\oplus {\cal C}_2^2\oplus {\cal C}_k$ for some $k\in [2,n-4]$, hence $L\cong {\cal C}_{n-k-2}\oplus {\cal C}_2^2\oplus {\cal C}_k$ or $L\cong {\cal C}_{n-k-3}\oplus {\cal C}_2^2\oplus {\cal C}_{k+1}$ or $L\cong {\cal C}_{n-k-3}\oplus {\cal N}_5\oplus {\cal C}_k$, so $\cdot ^{\Delta L}$ is the only weak complementation on $L$ and, in the first two of these cases, ${\rm Con}_{\WCL }(L,\cdot ^{\Delta L})\cong {\cal C}_2^{n-3}\oplus {\cal C}_2$, thus $|{\rm Con}_{\WCL }(L,\cdot ^{\Delta L})|=2^{n-3}+1$, while, in the latter case, ${\rm Con}_{\WCL }(L,\cdot ^{\Delta L})\cong {\rm Con}({\cal C}_{n-k-3}\oplus {\cal N}_5\oplus {\cal C}_{k-1})\oplus {\cal C}_2\cong ({\cal C}_2^{n-6}\times ({\cal C}_2\oplus {\cal C}_2^2))\oplus {\cal C}_2$, thus $|{\rm Con}_{\WCL }(L,\cdot ^{\Delta L})|=5\cdot 2^{n-6}+1$;

$\bullet \ $ $|L/\alpha |=n-2>4$ and $L/\alpha \cong {\cal C}_{n-2}$, so that $\alpha $ is as in Lemma \ref{pWCLcg}, (\ref{pWCLcg2}), case \textcircled{$\beta\! _1$}, thus $L\cong {\cal C}_{n-k-2}\oplus {\cal C}_2^2\oplus {\cal C}_k$ for some $k\in [1,n-3]$ and $\cdot ^{\Delta }=\cdot ^{\Delta (L)}$, thus, as above, $|{\rm Con}_{\WCL }(L,\cdot ^{\Delta (L)})|=2^{n-3}+1$ if $k\geq 2$, while, if $k=1$, so that $L\cong {\cal C}_{n-3}\oplus {\cal C}_2^2$, then ${\rm Con}_{\WCL }(L,\cdot ^{\Delta (L)})\cong ({\cal C}_2^{n-4}\times {\cal C}_1)\oplus {\cal C}_2\cong {\cal C}_2^{n-4}\oplus {\cal C}_2$, so that $|{\rm Con}_{\WCL }(L,\cdot ^{\Delta (L)})|=2^{n-4}+1$.

Every case above contradicts the assumption that $|{\rm Con}_{\WCL }(L,\cdot ^{\Delta })|=2^{n-3}+2$; hence $|{\rm Con}_{\WCL }(L,\cdot ^{\Delta })|\leq 2^{n-3}+1$.

\noindent (\ref{cgwclfin5}) Assume that $|{\rm Con}_{\WCL }(L,\cdot ^{\Delta })|=2^{n-3}+1$, so that $|{\rm Con}_{\WCL }(L/\alpha ,\cdot ^{\Delta [\alpha ]})|\geq 2^{n-4}+1/2$ by Lemma \ref{atcg}, thus $|{\rm Con}_{\WCL }(L/\alpha ,\cdot ^{\Delta [\alpha ]})|\geq 2^{n-4}+1$ and hence, by the proof of (\ref{cgwclfin4}), $L\cong {\cal C}_{n-k-2}\oplus {\cal C}_2^2\oplus {\cal C}_k$ for some $k\in [1,n-3]$ or $L\cong {\cal C}_{n-k-3}\oplus {\cal N}_5\oplus {\cal C}_k$ for some $k\in [2,n-4]$, out of which only the first form for $k\geq 2$, with its unique weak complementation $\cdot ^{\Delta L}$, has exactly $2^{n-3}+1$ congruences. 

\noindent (\ref{cgwclfin6}),(\ref{cgwclfin7}) Assume that $|{\rm Con}_{\WCL }(L,\cdot ^{\Delta })|\leq 2^{n-3}$, so that, by (\ref{cgwclfin1})--(\ref{cgwclfin5}), $L$ is not isomorphic to either of the lattices ${\cal C}_n$ and ${\cal C}_{n-k-2}\oplus {\cal C}_2^2\oplus {\cal C}_k$ for any $k\in [2,n-4]$.

Assume by absurdum that $|{\rm Con}_{\WCL }(L,\cdot ^{\Delta })|\geq 5\cdot 2^{n-6}+2$, so that $|{\rm Con}_{\WCL }(L/\alpha ,\cdot ^{\Delta [\alpha ]})|\geq 5\cdot 2^{n-7}+1$ by Lemma \ref{atcg}, so that, by the induction hypothesis, we are in one of the following cases:

\noindent $\blacksquare \ $ $|{\rm Con}_{\WCL }(L/\alpha ,\cdot ^{\Delta [\alpha ]})|=2^{n-3}+1$; but then, as in (\ref{cgwclfin1}), it would follow that $L\cong {\cal C}_n$, contradicting the above;

\noindent $\blacksquare \ $ $|{\rm Con}_{\WCL }(L/\alpha ,\cdot ^{\Delta [\alpha ]})|=2^{n-4}+1$; but then, as in (\ref{cgwclfin4}), it would follow that $L\cong {\cal C}_{n-k-2}\oplus {\cal C}_2^2\oplus {\cal C}_k$ for some $k\in [1,n-3]$ or $L\cong {\cal C}_{n-k-3}\oplus {\cal N}_5\oplus {\cal C}_k$ for some $k\in [2,n-4]$, hence, by the cases eliminated at the beginning of this proof of (\ref{cgwclfin9}), either $L\cong {\cal C}_{n-3}\oplus {\cal C}_2^2$, which can be endowed with $\cdot ^{\Delta L}$ and with $\cdot ^{\Delta _{{\rm CoAt}(L)}}$, or $L\cong {\cal C}_{n-k-3}\oplus {\cal N}_5\oplus {\cal C}_k$ for some $k\in [2,n-4]$, which can only be endowed with $\cdot ^{\Delta L}$, and neither of these weakly complemented lattices has $5\cdot 2^{n-6}+2$ or more congruences; indeed:

\noindent ${\rm Con}_{\WCL }({\cal C}_{n-3}\oplus {\cal C}_2^2,\cdot ^{\Delta {\cal C}_{n-3}\oplus {\cal C}_2^2})\cong {\rm Con}_{\WCL }({\cal C}_{n-3}\oplus {\cal C}_2^2,\cdot ^{\Delta _{{\rm CoAt}({\cal C}_{n-3}\oplus {\cal C}_2^2)}})\cong ({\rm Con}({\cal C}_{n-3})\times {\cal C}_1)\oplus {\cal C}_2\cong {\cal C}_2^{n-4}\oplus {\cal C}_2$, hence $|{\rm Con}_{\WCL }({\cal C}_{n-3}\oplus {\cal C}_2^2,\cdot ^{\Delta {\cal C}_{n-3}\oplus {\cal C}_2^2})|=|{\rm Con}_{\WCL }({\cal C}_{n-3}\oplus {\cal C}_2^2,\cdot ^{\Delta _{{\rm CoAt}({\cal C}_{n-3}\oplus {\cal C}_2^2)}})|=2^{n-4}+1$;

\noindent ${\rm Con}_{\WCL }({\cal C}_{n-k-3}\oplus {\cal N}_5\oplus {\cal C}_k,\cdot ^{\Delta {\cal C}_{n-k-3}\oplus {\cal N}_5\oplus {\cal C}_k})\cong {\rm Con}({\cal C}_{n-k-3}\oplus {\cal N}_5\oplus {\cal C}_{k-1})\oplus {\cal C}_2\cong ({\cal C}_2^{n-6}\times ({\cal C}_2\oplus {\cal C}_2^2))\oplus {\cal C}_2$, thus $|{\rm Con}_{\WCL }({\cal C}_{n-k-3}\oplus {\cal N}_5\oplus {\cal C}_k,\cdot ^{\Delta {\cal C}_{n-k-3}\oplus {\cal N}_5\oplus {\cal C}_k})|=5\cdot 2^{n-6}+1$;

\noindent $\blacksquare \ $ $|{\rm Con}_{\WCL }(L/\alpha ,\cdot ^{\Delta [\alpha ]})|=5\cdot 2^{n-7}+1$; then, by the induction hypothesis, it follows that $|L/\alpha |=n-1$ and $L\cong {\cal C}_{n-k-4}\oplus {\cal N}_5\oplus {\cal C}_k$ for some $k\in [2,n-5]$, so that, by Lemma \ref{pWCLcg}, (\ref{pWCLcg1}), $L\cong {\cal C}_{n-k-4}\oplus {\cal N}_5\oplus {\cal C}_{k+1}$ or $L\cong {\cal C}_{n-k-3}\oplus {\cal N}_5\oplus {\cal C}_k$ or $L\cong {\cal C}_{n-k-4}\oplus ({\cal C}_3\boxplus {\cal C}_5)\oplus {\cal C}_k$ or $L\cong {\cal C}_{n-k-4}\oplus ({\cal C}_4\boxplus {\cal C}_4)\oplus {\cal C}_k$, each of which can only be endowed with the trivial weak complementation $\cdot ^{\Delta L}$, w.r.t. which neither has $5\cdot 2^{n-6}+2$ or more congruences; indeed:

\noindent ${\rm Con}_{\WCL }({\cal C}_{n-k-4}\oplus {\cal N}_5\oplus {\cal C}_{k+1},\cdot ^{\Delta {\cal C}_{n-k-4}\oplus {\cal N}_5\oplus {\cal C}_{k+1}})\cong {\rm Con}_{\WCL }({\cal C}_{n-k-3}\oplus {\cal N}_5\oplus {\cal C}_k,\cdot ^{\Delta {\cal C}_{n-k-3}\oplus {\cal N}_5\oplus {\cal C}_k})\cong ({\cal C}_2^{n-6}\times ({\cal C}_2\oplus {\cal C}_2^2))\oplus {\cal C}_2$, thus $|{\rm Con}_{\WCL }({\cal C}_{n-k-4}\oplus {\cal N}_5\oplus {\cal C}_{k+1},\cdot ^{\Delta {\cal C}_{n-k-4}\oplus {\cal N}_5\oplus {\cal C}_{k+1}})|=|{\rm Con}_{\WCL }({\cal C}_{n-k-3}\oplus {\cal N}_5\oplus {\cal C}_k,\cdot ^{\Delta {\cal C}_{n-k-3}\oplus {\cal N}_5\oplus {\cal C}_k})|=5\cdot 2^{n-6}+1$;

\noindent it is easy to check that ${\rm Con}({\cal C}_3\boxplus {\cal C}_5)\cong {\rm Con}({\cal C}_4\boxplus {\cal C}_4)\cong {\cal C}_2^2\oplus {\cal C}_2^2$ \cite{eucfifin}, so that: ${\rm Con}_{\WCL }({\cal C}_{n-k-4}\oplus ({\cal C}_3\boxplus {\cal C}_5)\oplus {\cal C}_k,\cdot ^{\Delta {\cal C}_{n-k-4}\oplus ({\cal C}_3\boxplus {\cal C}_5)\oplus {\cal C}_k})\cong {\rm Con}_{\WCL }({\cal C}_{n-k-4}\oplus ({\cal C}_4\boxplus {\cal C}_4)\oplus {\cal C}_k,\cdot ^{\Delta {\cal C}_{n-k-4}\oplus ({\cal C}_4\boxplus {\cal C}_4)\oplus {\cal C}_k})\cong ({\cal C}_2^{n-7}\times ({\cal C}_2^2\oplus {\cal C}_2^2))\oplus {\cal C}_2$, thus $|{\rm Con}_{\WCL }({\cal C}_{n-k-4}\oplus ({\cal C}_3\boxplus {\cal C}_5)\oplus {\cal C}_k,\cdot ^{\Delta {\cal C}_{n-k-4}\oplus ({\cal C}_3\boxplus {\cal C}_5)\oplus {\cal C}_k})|=|{\rm Con}_{\WCL }({\cal C}_{n-k-4}\oplus ({\cal C}_4\boxplus {\cal C}_4)\oplus {\cal C}_k,\cdot ^{\Delta {\cal C}_{n-k-4}\oplus ({\cal C}_4\boxplus {\cal C}_4)\oplus {\cal C}_k})|=7\cdot 2^{n-7}+1$.

Therefore $|{\rm Con}_{\WCL }(L,\cdot ^{\Delta })|\leq 5\cdot 2^{n-6}+1$.

\noindent (\ref{cgwclfin8}) Assume that $|{\rm Con}_{\WCL }(L,\cdot ^{\Delta })|=5\cdot 2^{n-6}+1$, so that $|{\rm Con}_{\WCL }(L/\alpha ,\cdot ^{\Delta [\alpha ]})|\geq 5\cdot 2^{n-7}+1/2$ by Lemma \ref{atcg}, and thus $|{\rm Con}_{\WCL }(L/\alpha ,\cdot ^{\Delta [\alpha ]})|\geq 5\cdot 2^{n-7}+1$, hence, as in the proof of (\ref{cgwclfin7}), it follows that $L\cong {\cal C}_{n-k-3}\oplus {\cal N}_5\oplus {\cal C}_k$ for some $k\in [2,n-4]$.

\noindent (\ref{cgwclfin9}),(\ref{cgwclfin10}) Assume that $|{\rm Con}_{\WCL }(L,\cdot ^{\Delta })|<5\cdot 2^{n-6}$ and assume by absurdum that $|{\rm Con}_{\WCL }(L,\cdot ^{\Delta })|\geq 2^{n-4}+2$, so that $|{\rm Con}_{\WCL }(L/\alpha ,\cdot ^{\Delta [\alpha ]})|\geq 2^{n-5}+1$ by Lemma \ref{atcg}.

Then, by the proof of (\ref{cgwclfin8}), we can not have $|{\rm Con}_{\WCL }(L/\alpha ,\cdot ^{\Delta [\alpha ]})|\in \{2^{n-3}+1,2^{n-4}+1,5\cdot 2^{n-7}+1\}$, hence, by the induction hypothesis, it follows that $|{\rm Con}_{\WCL }(L/\alpha ,\cdot ^{\Delta [\alpha ]})|=2^{n-5}+1$.

Let us note that, in the case when $|{\rm Con}_{\WCL }(L/\alpha ,\cdot ^{\Delta [\alpha ]})|=2^{n-4}+1$, one of the possible structures of $L$ is $L\cong {\cal C}_{n-3}\oplus {\cal C}_2^2$, endowed with any of its two weak complementations, which is the only situation where we have $|{\rm Con}_{\WCL }(L/\alpha ,\cdot ^{\Delta [\alpha ]})|\in \{2^{n-3}+1,2^{n-4}+1,5\cdot 2^{n-7}+1\}$ and $|{\rm Con}_{\WCL }(L,\cdot ^{\Delta })|=2^{n-4}+1$.

Now we consider the case $|{\rm Con}_{\WCL }(L/\alpha ,\cdot ^{\Delta [\alpha ]})|=2^{n-5}+1$, in which, by the induction hypothesis, we can be in one of these situations:

\noindent $\blacksquare \ $ $|L/\alpha |=n-1$, so that, by the induction hypothesis, Lemma \ref{pWCLcg}, (\ref{pWCLcg1}), the fact that, for any bounded lattice $L$, if $b=1\succ a\in L$, so that $a^{\Delta L}=1$ and $b^{\Delta L}=1^{\Delta L}=0$, then $Cg_{\WCL ,L}(a,b)=L^2$ in $(L,\cdot ^{\Delta L})$ and the calculations in Example \ref{exhsums}, one of the following holds:

$\bullet \ $ $L/\alpha \cong {\cal C}_{n-r-s+2}\oplus ({\cal C}_r\boxplus {\cal C}_s)$ for some $r,s\in \N \setminus \{0,1,2\}$ such that $r+s\leq n$, so that $L\cong {\cal C}_{n-r-s+3}\oplus ({\cal C}_r\boxplus {\cal C}_s)$ or $L\cong {\cal C}_{n-r-s+2}\oplus ({\cal C}_{r+1}\boxplus {\cal C}_s)$ or $L\cong {\cal C}_{n-r-s+2}\oplus ({\cal C}_r\boxplus {\cal C}_{s+1})$; each of these three bounded lattices has exactly $2^{n-4}+1$ congruences w.r.t. its trivial weak complementation, and, if it not isomorphic to ${\cal C}_{n-3}\oplus {\cal C}_2^2$, then it has strictly less than $2^{n-4}+1$ congruences w.r.t. its nontrivial weak complementation $\cdot ^{\Delta _{{\rm CoAt}(L)}}$;

$\bullet \ $ $L/\alpha \cong {\cal C}_{n-k-5}\oplus ({\cal C}_2\times {\cal C}_3)\oplus {\cal C}_k$ for some $k\in [2,n-6]$, so that $L\cong {\cal C}_{n-k-4}\oplus ({\cal C}_2\times {\cal C}_3)\oplus {\cal C}_k$ or $L\cong {\cal C}_{n-k-5}\oplus ({\cal C}_2\times {\cal C}_3)\oplus {\cal C}_{k+1}$ or $L\cong {\cal C}_{n-k-5}\oplus M\oplus {\cal C}_k$, where $M$ is one of the following bounded lattices, each of which can be easily proven to have exactly $9$ lattice congruences, in whose Hasse diagrams we indicate the pair $a,b$ of elements of $L$ that generates $\alpha $ as in Lemma \ref{pWCLcg}, (\ref{pWCLcg1}):\vspace*{-2pt}

\begin{center}\begin{tabular}{cccccc}
\hspace*{-15pt}
\begin{picture}(40,60)(0,0)
\put(20,0){\circle*{3}}
\put(0,20){\circle*{3}}
\put(40,20){\circle*{3}}
\put(20,40){\circle*{3}}
\put(60,40){\circle*{3}}
\put(40,60){\circle*{3}}
\put(10,10){\circle*{3}}
\put(4,4){$a$}
\put(-7,17){$b$}
\put(20,0){\line(1,1){40}}
\put(20,0){\line(-1,1){20}}
\put(40,60){\line(-1,-1){40}}
\put(40,60){\line(1,-1){20}}
\put(20,40){\line(1,-1){20}}
\end{picture}
&\hspace*{20pt}
\begin{picture}(40,60)(0,0)
\put(20,0){\circle*{3}}
\put(0,20){\circle*{3}}
\put(40,20){\circle*{3}}
\put(20,40){\circle*{3}}
\put(60,40){\circle*{3}}
\put(40,60){\circle*{3}}
\put(30,10){\circle*{3}}
\put(31,5){$a$}
\put(42,14){$b$}
\put(20,0){\line(1,1){40}}
\put(20,0){\line(-1,1){20}}
\put(40,60){\line(-1,-1){40}}
\put(40,60){\line(1,-1){20}}
\put(20,40){\line(1,-1){20}}
\end{picture}
&\hspace*{20pt}
\begin{picture}(40,60)(0,0)
\put(20,0){\circle*{3}}
\put(0,20){\circle*{3}}
\put(40,20){\circle*{3}}
\put(20,40){\circle*{3}}
\put(60,40){\circle*{3}}
\put(40,60){\circle*{3}}
\put(10,30){\circle*{3}}
\put(4,30){$b$}
\put(-7,17){$a$}
\put(20,0){\line(1,1){40}}
\put(20,0){\line(-1,1){20}}
\put(40,60){\line(-1,-1){40}}
\put(40,60){\line(1,-1){20}}
\put(20,40){\line(1,-1){20}}
\end{picture}
&\hspace*{20pt}
\begin{picture}(40,60)(0,0)
\put(20,0){\circle*{3}}
\put(0,20){\circle*{3}}
\put(40,20){\circle*{3}}
\put(20,40){\circle*{3}}
\put(60,40){\circle*{3}}
\put(40,60){\circle*{3}}
\put(50,50){\circle*{3}}
\put(52,51){$b$}
\put(63,38){$a$}
\put(20,0){\line(1,1){40}}
\put(20,0){\line(-1,1){20}}
\put(40,60){\line(-1,-1){40}}
\put(40,60){\line(1,-1){20}}
\put(20,40){\line(1,-1){20}}
\end{picture}
&\hspace*{20pt}
\begin{picture}(40,60)(0,0)
\put(20,0){\circle*{3}}
\put(0,20){\circle*{3}}
\put(40,20){\circle*{3}}
\put(20,40){\circle*{3}}
\put(60,40){\circle*{3}}
\put(40,60){\circle*{3}}
\put(30,50){\circle*{3}}
\put(24,50){$b$}
\put(12,38){$a$}
\put(20,0){\line(1,1){40}}
\put(20,0){\line(-1,1){20}}
\put(40,60){\line(-1,-1){40}}
\put(40,60){\line(1,-1){20}}
\put(20,40){\line(1,-1){20}}
\end{picture}
&\hspace*{20pt}
\begin{picture}(40,60)(0,0)
\put(20,0){\circle*{3}}
\put(0,20){\circle*{3}}
\put(40,20){\circle*{3}}
\put(20,40){\circle*{3}}
\put(60,40){\circle*{3}}
\put(40,60){\circle*{3}}
\put(50,30){\circle*{3}}
\put(51,25){$a$}
\put(63,37){$b$}
\put(20,0){\line(1,1){40}}
\put(20,0){\line(-1,1){20}}
\put(40,60){\line(-1,-1){40}}
\put(40,60){\line(1,-1){20}}
\put(20,40){\line(1,-1){20}}
\end{picture}\end{tabular}\end{center}\vspace*{-3pt}

\noindent out of the eight lattices enumerated above, w.r.t. its their unique trivial weak complementations, the first two have exactly $2^{n-4}+1$ congruences, while the other six have exactly $9\cdot 2^{n-8}+1$ congruences;

$\bullet \ $ $L/\alpha \cong {\cal C}_{n-r-s-5}\oplus {\cal C}_2^2\oplus {\cal C}_r\oplus {\cal C}_2^2\oplus {\cal C}_s$ for some $r,s\in \N ^*$ such that $s>1$ and $r+s\leq n-6$, so that $L\cong {\cal C}_{n-r-s-4}\oplus {\cal C}_2^2\oplus {\cal C}_r\oplus {\cal C}_2^2\oplus {\cal C}_s$ or $L\cong {\cal C}_{n-r-s-5}\oplus {\cal C}_2^2\oplus {\cal C}_{r+1}\oplus {\cal C}_2^2\oplus {\cal C}_s$ or $L\cong {\cal C}_{n-r-s-5}\oplus {\cal C}_2^2\oplus {\cal C}_r\oplus {\cal C}_2^2\oplus {\cal C}_{s+1}$ or $L\cong {\cal C}_{n-r-s-5}\oplus {\cal N}_5\oplus {\cal C}_r\oplus {\cal C}_2^2\oplus {\cal C}_s$ or $L\cong {\cal C}_{n-r-s-5}\oplus {\cal C}_2^2\oplus {\cal C}_r\oplus {\cal N}_5\oplus {\cal C}_s$, each of which can only be endowed with the trivial weak complementation, w.r.t. the first three have exactly $2^{n-4}+1$ congruences, while the latter two have exactly $5\cdot 2^{n-7}+1$ congruences;

\noindent $\blacksquare \ $ $|L/\alpha |=n-2$, so that, by the induction hypothesis, $L/\alpha \cong {\cal C}_{n-k-4}\oplus {\cal C}_2^2\oplus {\cal C}_k$ for some $k\in [2,n-5]$ and, since $n\geq 9$, we can be in neither of the cases \textcircled{$\alpha $} or \textcircled{$\beta\! _2$} in Lemma \ref{pWCLcg}, (\ref{pWCLcg2}), thus we are in case \textcircled{$\beta\! _1$}, hence $L\cong {\cal C}_{n-k-h-4}\oplus {\cal C}_2^2\oplus {\cal C}_h\oplus {\cal C}_2^2\oplus {\cal C}_k$ for some $h\in [1,n-k-5]$ or $L\cong {\cal C}_{n-k-4}\oplus {\cal C}_2^2\oplus {\cal C}_h\oplus {\cal C}_2^2\oplus {\cal C}_k$ for some $h\in [1,k-3]$ or $L\cong {\cal C}_{n-k-4}\oplus ({\cal C}_2\times {\cal C}_3)\oplus {\cal C}_k$, each of which can only be endowed with the trivial weak complementation, w.r.t. which it has exactly $2^{n-4}+1$ congruences;

\noindent $\blacksquare \ $ $|L/\alpha |=n-3$, so that $L/\alpha \cong {\cal C}_{n-3}$ by the induction hypothesis and, since $n\geq 9$, we can be in neither of the cases \textcircled{$\gamma $}, \textcircled{$\delta _1$}, \textcircled{$\delta _2$}, \textcircled{$\epsilon _2$}, \textcircled{$\varphi\! _2$} or \textcircled{$\psi\! _2$} in Lemma \ref{pWCLcg}, (\ref{pWCLcg3}), hence we are in one of the following cases:

\textcircled{$\delta _3$}, \textcircled{$\delta _4$}, \textcircled{$\epsilon _1$}: in each of these cases $L\cong {\cal C}_{n-k-2}\oplus {\cal C}_2^2\oplus {\cal C}_k$ for some $k\in [1,n-3]$, which contradicts the fact that, with the notations in Lemma \ref{pWCLcg}, (\ref{pWCLcg3}), $a/Cg_L(a,b)=[a,b\vee c]_L\cong {\cal C}_2^2$;

\textcircled{$\varphi\! _1$} or \textcircled{$\psi\! _1$}: $L\cong {\cal C}_{n-k-3}\oplus {\cal N}_5\oplus {\cal C}_k$ for some $k\in [2,n-4]$, which has $5\cdot 2^{n-6}+1$ congruences w.r.t. its unique weak complementation, contradicting the hypothesis;

\textcircled{$\chi $}: since $L/\alpha $ is a chain and thus $(b\vee c)/\alpha $ is comparable to $e/\alpha $, we can not be in case \textcircled{$\chi\! _2$} and we have $L\cong {\cal C}_{n-k-4}\oplus ({\cal C}_2\times {\cal C}_3)\oplus {\cal C}_k$ for some $k\in [1,n-5]$, which has $2^{n-4}+1$ congruences w.r.t. its unique weak complementation if $k>1$ and $2^{n-5}+1$ congruences w.r.t. any of its weak complementations if $k=1$, as noticed in Example \ref{C2xC3}.

Therefore $|{\rm Con}_{\WCL }(L,\cdot ^{\Delta })|\leq 2^{n-4}+1$.

\noindent (\ref{cgwclfin11}) Assume that $|{\rm Con}_{\WCL }(L,\cdot ^{\Delta })|=2^{n-4}+1$, so that $|{\rm Con}_{\WCL }(L/\alpha ,\cdot ^{\Delta [\alpha ]})|\geq 2^{n-5}+1/2$ by Lemma \ref{atcg} and thus $|{\rm Con}_{\WCL }(L/\alpha ,\cdot ^{\Delta [\alpha ]})|\geq 2^{n-5}+1$.

Then, by the proof of (\ref{cgwclfin10}), it follows that: $L\cong {\cal C}_{n-r-s+3}\oplus ({\cal C}_r\boxplus {\cal C}_s)$ for some $r,s\in \N \setminus \{0,1,2\}$ such that $r+s\leq n+1$ and $\cdot ^{\Delta }$ is trivial if $r+s>6$ or $L\cong {\cal C}_{n-k-4}\oplus ({\cal C}_2\times {\cal C}_3)\oplus {\cal C}_k$ for some $k\in [2,n-5]$ or $L\cong {\cal C}_{n-r-s-4}\oplus {\cal C}_2^2\oplus {\cal C}_r\oplus {\cal C}_2^2\oplus {\cal C}_s$ for some $r,s\in \N ^*$ such that $s>1$ and $r+s\leq n-5$.\end{proof}

\begin{corollary} For any $n\in \N ^*$, any lattice $L$ with $|L|=n$ and any dual weak complementation $\cdot ^{\nabla }$ on $L$, we have:\begin{enumerate}
\item\label{cgwdclfin1} $|{\rm Con}_{\WDCL }(L,\cdot ^{\nabla })|\leq 2^{n-2}+1$;
\item\label{cgwdclfin2} $|{\rm Con}_{\WDCL }(L,\cdot ^{\nabla })|=2^{n-2}+1$ iff ${\rm Con}_{\WDCL }(L,\cdot ^{\nabla })\cong {\cal C}_2^{n-2}\oplus {\cal C}_2$ iff $n\geq 2$ and $L\cong {\cal C}_n$;
\item\label{cgwdclfin3} $|{\rm Con}_{\WDCL }(L,\cdot ^{\nabla })|=2^{n-2}$ iff $n=4$ and ${\rm Con}_{\WDCL }(L,\cdot ^{\nabla })\cong {\cal C}_2^2$ iff $L\cong {\cal C}_2^2$ and $\cdot ^{\nabla }$ is the Boolean complementation;
\item\label{cgwdclfin4} $|{\rm Con}_{\WDCL }(L,\cdot ^{\nabla })|<2^{n-2}$ iff $|{\rm Con}_{\WDCL }(L,\cdot ^{\nabla })|\leq 2^{n-3}+1$;
\item\label{cgwdclfin5} $|{\rm Con}_{\WDCL }(L,\cdot ^{\nabla })|=2^{n-3}+1$ iff ${\rm Con}_{\WDCL }(L,\cdot ^{\nabla })\cong {\cal C}_2^{n-3}\oplus {\cal C}_2$ iff $n\geq 5$ and $L\cong {\cal C}_k\oplus {\cal C}_2^2\oplus {\cal C}_{n-k-2}$ for some $k\in [2,n-3]$;
\item\label{cgwdclfin6} $|{\rm Con}_{\WDCL }(L,\cdot ^{\nabla })|=3\cdot 2^{n-5}$ iff $n=5$ and ${\rm Con}_{\WDCL }(L,\cdot ^{\nabla })\cong {\cal C}_3$ or $n=6$ and ${\rm Con}_{\WCL }(L,\cdot ^{\nabla })\cong {\cal C}_2\times {\cal C}_3$ iff $L\cong {\cal N}_5$ or $L\cong {\cal C}_2\times {\cal C}_3$ and $\cdot ^{\nabla }$ is the direct product of the trivial dual weak complementations on the chains ${\cal C}_2$ and ${\cal C}_3$;
\item\label{cgwdclfin7} if $n\neq 5$ and $(L,\cdot ^{\nabla })\ncong _{\WDCL }({\cal C}_2,\cdot ^{\nabla {\cal C}_2})\times ({\cal C}_3,\cdot ^{\nabla {\cal C}_3})$, then: $|{\rm Con}_{\WDCL }(L,\cdot ^{\nabla })|\leq 2^{n-3}$ iff $|{\rm Con}_{\WDCL }(L,\cdot ^{\nabla })|\leq 5\cdot 2^{n-6}+1$;
\item\label{cgwdclfin8} $|{\rm Con}_{\WDCL }(L,\cdot ^{\nabla })|=5\cdot 2^{n-6}+1$ iff ${\rm Con}_{\WDCL }(L,\cdot ^{\nabla })\cong ({\cal C}_2^{n-6}\times ({\cal C}_2\oplus {\cal C}_2^2))\oplus {\cal C}_2$ iff $n\geq 6$ and $L\cong {\cal C}_k\oplus {\cal N}_5\oplus {\cal C}_{n-k-3}$ for some $k\in [2,n-4]$;
\item\label{cgwdclfin9} $|{\rm Con}_{\WDCL }(L,\cdot ^{\nabla })|=5\cdot 2^{n-6}$ iff $n=6$ and either ${\rm Con}_{\WDCL }(L,\cdot ^{\nabla })\cong {\cal C}_2\oplus {\cal C}_2^2$ or ${\rm Con}_{\WDCL }(L,\cdot ^{\nabla })\cong {\cal C}_2^2\oplus {\cal C}_2$ iff one of the following holds:

$\bullet \ $ $L\cong {\cal C}_2\times {\cal C}_3$ and $\cdot ^{\nabla }$ is neither trivial nor equal to the direct product of the dual weak complementations on the chains ${\cal C}_2$ and ${\cal C}_3$ (case in which ${\rm Con}_{\WDCL }(L,\cdot ^{\nabla })\cong {\cal C}_2\oplus {\cal C}_2^2$);

$\bullet \ $  $L\cong {\cal C}_3\boxplus {\cal C}_5$ or $L\cong {\cal C}_4\boxplus {\cal C}_4$ and $\cdot ^{\nabla }$ is trivial (case in which ${\rm Con}_{\WDCL }(L,\cdot ^{\nabla })\cong {\cal C}_2^2\oplus {\cal C}_2$);
\item\label{cgwdclfin10} $|{\rm Con}_{\WDCL }(L,\cdot ^{\nabla })|<5\cdot 2^{n-6}$ iff $|{\rm Con}_{\WDCL }(L,\cdot ^{\nabla })|\leq 2^{n-4}+1$;
\item\label{cgwdclfin11} $|{\rm Con}_{\WDCL }(L,\cdot ^{\nabla })|=2^{n-4}+1$ iff $n\geq 5$ and ${\rm Con}_{\WDCL }(L,\cdot ^{\nabla })\cong {\cal C}_2^{n-4}\oplus {\cal C}_2$ iff one of the following holds:

$\bullet \ $ $n\geq 5$ and $L\cong ({\cal C}_r\boxplus {\cal C}_s)\oplus {\cal C}_{n-r-s+3}$ for some $r,s\in \N \setminus \{0,1,2\}$ such that $r+s\leq n+1$ and, if $r+s>6$ (that is if $L\ncong {\cal C}_2^2\oplus {\cal C}_{n-3}$), then $\cdot ^{\nabla }$ is trivial;

$\bullet \ $ $n\geq 7$ and $L\cong {\cal C}_k\oplus ({\cal C}_2\times {\cal C}_3)\oplus {\cal C}_{n-k-4}$ for some $k\in [2,n-5]$;

$\bullet \ $ $n\geq 8$ and $L\cong {\cal C}_s\oplus {\cal C}_2^2\oplus {\cal C}_r\oplus {\cal C}_2^2\oplus {\cal C}_{n-r-s-4}$ for some $r,s\in \N ^*$ such that $s>1$ and $r+s\leq n-5$.\end{enumerate}\label{cgwdclfin}\end{corollary}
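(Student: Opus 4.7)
The plan is to derive Corollary \ref{cgwdclfin} directly from Theorem \ref{cgwclfin} by invoking lattice duality, so that no separate inductive argument is needed. The observation that makes this clean is as follows: given any bounded lattice $L$ with $|L|=n$, the map $L\mapsto L^d$ is a bijection between lattices of cardinality $n$; a unary operation $\cdot ^{\nabla }$ on $L$ is a dual weak complementation on $L$ if and only if the same operation, regarded as acting on $L^d$, is a weak complementation on $L^d$; and a lattice congruence of $L$ is identical, as a binary relation on the underlying set, with a lattice congruence of $L^d$. Hence the correspondence $(L,\cdot ^{\nabla })\mapsto (L^d,\cdot ^{\nabla })$ is a bijection between the class of dual weakly complemented lattices and the class of weakly complemented lattices, preserving cardinality, and satisfying ${\rm Con}_{\WDCL }(L,\cdot ^{\nabla })={\rm Con}_{\WCL }(L^d,\cdot ^{\nabla })$ as sets, in particular as bounded lattices.

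First I would set up this duality carefully, noting that the defining identities of $\WDCL $ are the dualized versions (swap $\wedge \leftrightarrow \vee $, $0\leftrightarrow 1$, $\leq \leftrightarrow \geq $) of those of $\WCL $, so that $(L,\cdot ^{\nabla })\in \WDCL $ iff $(L^d,\cdot ^{\nabla })\in \WCL $, and similarly that the trivial dual weak complementation on $L$ corresponds to the trivial weak complementation on $L^d$, and that $\cdot ^{\nabla a,b}$ on $L$ (when defined) corresponds to $\cdot ^{\Delta a,b}$ on $L^d$, where the roles of atoms and coatoms are interchanged. Next I would apply Theorem \ref{cgwclfin} to the weakly complemented lattice $(L^d,\cdot ^{\nabla })$ and then translate each case back via duality; every numerical bound in the corollary matches term by term the corresponding numerical bound in the theorem, since cardinalities are invariant under the duality, and each congruence lattice in the theorem is replaced by itself (the congruence lattice of $L$ equals that of $L^d$).

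Each structural case then dualizes as follows: ${\cal C}_n$ is self--dual, ${\cal C}_2^2$ is self--dual with its Boolean complementation self--dual, ${\cal N}_5$ is self--dual, ${\cal C}_2\times {\cal C}_3$ is self--dual, and ${\cal C}_r\boxplus {\cal C}_s$ is self--dual since the horizontal sum of bounded lattices commutes with taking duals. For ordinal sums we use the identity $(K_1\oplus K_2\oplus \cdots \oplus K_m)^d\cong K_m^d\oplus \cdots \oplus K_2^d\oplus K_1^d$, which is exactly why in the corollary the ordinal sums appear reversed: e.g.\ ${\cal C}_{n-k-2}\oplus {\cal C}_2^2\oplus {\cal C}_k$ in Theorem \ref{cgwclfin} becomes ${\cal C}_k\oplus {\cal C}_2^2\oplus {\cal C}_{n-k-2}$ in the corollary (with $k$ and $n-k-2$ playing symmetric roles, the set of solutions as $k$ ranges over $[2,n-3]$ is the same), and likewise for the ${\cal N}_5$--case, the ${\cal C}_2\times {\cal C}_3$--case, and the two--$ {\cal C}_2^2$ case. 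The distinguished direct--product weak complementation $\cdot ^{\Delta {\cal C}_2}\times \cdot ^{\Delta {\cal C}_3}$ on ${\cal C}_2\times {\cal C}_3$ dualizes to $\cdot ^{\nabla {\cal C}_2}\times \cdot ^{\nabla {\cal C}_3}$, as required.

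The only step that requires some care, and which I expect to be the main source of tedium (though not of genuine difficulty), is the bookkeeping that matches, for each of items (\ref{cgwdclfin1})--(\ref{cgwdclfin11}), the exact isomorphism type of the dual weak complementation appearing in the corollary with the image under duality of the weak complementation appearing in the corresponding item of Theorem \ref{cgwclfin}; in particular in item (\ref{cgwdclfin9}), where one must verify that the nontrivial non--product dual weak complementations on ${\cal C}_2\times {\cal C}_3$ correspond under duality precisely to the nontrivial non--product weak complementations described in Example \ref{C2xC3}, and that the horizontal--sum cases ${\cal C}_3\boxplus {\cal C}_5$ and ${\cal C}_4\boxplus {\cal C}_4$ are handled with the trivial dual weak complementation (since, dually to the remark in the proof of Theorem \ref{cgwclfin}, these horizontal sums do not admit nontrivial dual weak complementations unless $0$ is strictly meet--irreducible in both summands, which for chains is automatic, and the corresponding congruence count matches). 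Once this correspondence is tabulated case by case, the corollary follows immediately from Theorem \ref{cgwclfin}.
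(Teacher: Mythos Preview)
Your proposal is correct and takes exactly the same approach as the paper: the paper's entire proof reads ``By duality, from Theorem \ref{cgwclfin}.'' Your write-up simply spells out in detail the duality correspondence $(L,\cdot^{\nabla})\mapsto(L^d,\cdot^{\nabla})$ and the case-by-case translation that the paper leaves implicit.
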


\begin{proof} By duality, from Theorem \ref{cgwclfin}.\end{proof}

\begin{corollary} For any $n\in \N ^*$, any lattice $L$ with $|L|=n$ and any weak dicomplementation $(\cdot ^{\Delta },\cdot ^{\nabla })$ on $L$, we have:\begin{enumerate}
\item\label{cgwdlfin0} $|{\rm Con}_{\WDL }(L,\cdot ^{\Delta },\cdot ^{\nabla })|\leq 2^{n-1}$;
\item\label{cgwdlfin0n} $|{\rm Con}_{\WDL }(L,\cdot ^{\Delta },\cdot ^{\nabla })|=2^{n-1}$ iff $n\in \{1,2\}$;
\item\label{cgwdlfin3} $|{\rm Con}_{\WDL }(L,\cdot ^{\Delta },\cdot ^{\nabla })|=2^{n-2}$ iff $n=4$ and ${\rm Con}_{\WDL }(L,\cdot ^{\Delta },\cdot ^{\nabla })\cong {\cal C}_2^2$ iff $L\cong {\cal C}_2^2$ and $\cdot ^{\Delta }=\cdot ^{\nabla }$ is the Boolean complementation;
\item\label{cgwdlfin1} if $L\ncong {\cal C}_2^2$ or its weak dicomplementation is not Boolean, then: $|{\rm Con}_{\WDL }(L,\cdot ^{\Delta },\cdot ^{\nabla })|<2^{n-1}$ iff $|{\rm Con}_{\WDL }(L,\linebreak \cdot ^{\Delta },\cdot ^{\nabla })|\leq 2^{n-3}+1$;
\item\label{cgwdlfin2} $|{\rm Con}_{\WDL }(L,\cdot ^{\Delta },\cdot ^{\nabla })|=2^{n-3}+1$ iff ${\rm Con}_{\WDL }(L,\cdot ^{\Delta },\cdot ^{\nabla })\cong {\cal C}_2^{n-3}\oplus {\cal C}_2$ iff $n\geq 3$ and $L\cong {\cal C}_n$;
\item\label{cgwdlfin4} $|{\rm Con}_{\WDL }(L,\cdot ^{\Delta },\cdot ^{\nabla })|\leq 2^{n-3}$ iff $|{\rm Con}_{\WDL }(L,\cdot ^{\Delta },\cdot ^{\nabla })|\leq 2^{n-4}+1$;
\item\label{cgwdlfin5} $|{\rm Con}_{\WDL }(L,\cdot ^{\Delta },\cdot ^{\nabla })|=2^{n-4}+1$ iff ${\rm Con}_{\WDL }(L,\cdot ^{\Delta },\cdot ^{\nabla })\cong {\cal C}_2^{n-4}\oplus {\cal C}_2$ iff one of the following holds:

$\bullet \ $ $n\geq 5$, $L\cong {\cal C}_k\boxplus {\cal C}_{n-k-2}$ for some $k\in [3,n-2]$ and $(\cdot ^{\Delta },\cdot ^{\nabla })$ is the trivial weak dicomplementation on $L$;

$\bullet \ $ $n\geq 6$ and $L\cong {\cal C}_k\oplus {\cal C}_2^2\oplus {\cal C}_{n-k-2}$ for some $k\in [2,n-4]$.\end{enumerate}\label{cgwdlfin}\end{corollary}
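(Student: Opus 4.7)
The approach is to use the identity
$${\rm Con}_{\WDL}(L, \cdot^{\Delta}, \cdot^{\nabla}) = {\rm Con}_{\WCL}(L, \cdot^{\Delta}) \cap {\rm Con}_{\WDCL}(L, \cdot^{\nabla}) \subseteq {\rm Con}(L),$$
combined with Theorem \ref{cglatfin}, Theorem \ref{cgwclfin}, and Corollary \ref{cgwdclfin}. Each prescribed upper bound on $|{\rm Con}_{\WDL}(L)|$ follows from the corresponding bound on $|{\rm Con}_{\WCL}(L)|$ or $|{\rm Con}_{\WDCL}(L)|$, and equality at a given value $N$ forces $L$ to appear in the intersection of the two short lists of lattices realizing $N$ in $\WCL$ and in $\WDCL$. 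For each surviving candidate I would compute ${\rm Con}_{\WDL}(L)$ explicitly via the ordinal- and horizontal-sum decompositions of Proposition \ref{cgordsum}, Remark \ref{cghsumtriv}, Proposition \ref{cghsumnontriv} and Corollary \ref{cghsumdualnontriv}, using the easy fact that $|{\rm Con}_{01}({\cal C}_m)| = 2^{m-3}$ for $m \geq 3$.

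For parts (\ref{cgwdlfin0})--(\ref{cgwdlfin2}), observe first that $|{\rm Con}_{\WDL}(L)| \leq |{\rm Con}(L)| \leq 2^{n-1}$, and equality in the outer bound forces $L \cong {\cal C}_n$; but for $n \geq 3$ the chain admits only the trivial weak dicomplementation (since $0 \in {\rm Mi}(L)$ and $1 \in {\rm Ji}(L)$), whence ${\rm Con}_{\WDL}(L) = {\rm Con}_{01}(L) \cup \{L^2\}$ has cardinality $2^{n-3}+1 < 2^{n-1}$, so equality occurs only for $n \in \{1,2\}$. For the intermediate values, if $|{\rm Con}_{\WDL}(L)| > 2^{n-3}+1$ then $|{\rm Con}_{\WCL}(L)| > 2^{n-3}+1$, so by Theorem \ref{cgwclfin}, parts (\ref{cgwclfin4})--(\ref{cgwclfin5}), either $L \cong {\cal C}_n$ (already handled) or $L \cong {\cal C}_2^2$ with Boolean $\cdot^{\Delta}$; in the latter case Corollary \ref{cgwdclfin}, part (\ref{cgwdclfin3}), symmetrically forces $\cdot^{\nabla}$ to be Boolean, so $(L, \cdot^{\Delta}, \cdot^{\nabla})$ is the Boolean algebra ${\cal C}_2^2$ and ${\rm Con}_{\WDL}(L) = {\rm Con}(L) \cong {\cal C}_2^2$ has exactly $2^{n-2} = 4$ elements. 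This settles parts (\ref{cgwdlfin3}) and (\ref{cgwdlfin1}), while the chain computation above settles (\ref{cgwdlfin2}).

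For parts (\ref{cgwdlfin4})--(\ref{cgwdlfin5}), I would enumerate all $L$ with $|{\rm Con}_{\WCL}(L)| > 2^{n-4}+1$ via Theorem \ref{cgwclfin}, parts (\ref{cgwclfin6})--(\ref{cgwclfin11}), and dually via Corollary \ref{cgwdclfin}, intersect these lists, and for each compatible weak dicomplementation compute ${\rm Con}_{\WDL}(L)$. By Section \ref{ordsum}, the ordinal-sum families in part (\ref{cgwdlfin5}) admit only the trivial weak dicomplementation, and for the horizontal-sum family ${\cal C}_r \boxplus {\cal C}_s$ one checks via Corollary \ref{cghsumdinontriv} that the nontrivial weak dicomplementations drop the congruence count strictly below $2^{n-4}+1$, so that in every equality case ${\rm Con}_{\WDL}(L) \cong {\rm Con}_{01}(L) \oplus {\cal C}_2$ by Remark \ref{cghsumtriv}, and the count reduces to a product over the summand decomposition. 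The main obstacle is the bookkeeping at this level: several candidate lattices admit multiple (dual) weak complementations --- most notably ${\cal C}_2 \times {\cal C}_3$ with its four weak complementations and four dual weak complementations (Example \ref{C2xC3}), and the horizontal sums admitting the nontrivial operations of Theorem \ref{mainth} and Corollary \ref{cghsumdualnontriv} --- and for each compatible pair $(\cdot^{\Delta}, \cdot^{\nabla})$ one must verify whether $|{\rm Con}_{\WDL}(L)|$ lands exactly at $2^{n-4}+1$ or strictly below, extending the case-by-case analysis already illustrated in Example \ref{exhsums}.
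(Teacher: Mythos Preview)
Your proposal is correct and follows essentially the same approach as the paper: both use the containment ${\rm Con}_{\WDL}(L)={\rm Con}_{\WCL}(L)\cap{\rm Con}_{\WDCL}(L)$ to reduce to the intersection of the candidate lists from Theorem~\ref{cgwclfin} and Corollary~\ref{cgwdclfin}, and then compute ${\rm Con}_{\WDL}$ case by case for the surviving lattices via the ordinal- and horizontal-sum machinery (Corollary~\ref{cgordsumwdl}, Remark~\ref{cghsumtriv}, Corollary~\ref{cghsumdinontriv}, and Example~\ref{exhsums}). The paper's proof is precisely this enumeration carried out explicitly, including the checks you flag as the remaining bookkeeping (the ${\cal C}_2\times{\cal C}_3$ and ${\cal C}_r\boxplus{\cal C}_s$ cases with their multiple weak dicomplementations).
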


\begin{proof} Since ${\rm Con}_{\WDL }(L,\cdot ^{\Delta },\cdot ^{\nabla })={\rm Con}_{\WCL }(L,\cdot ^{\Delta })\cap {\rm Con}_{\WDCL }(L,\cdot ^{\nabla })$ and thus $|{\rm Con}_{\WDL }(L,\cdot ^{\Delta },\cdot ^{\nabla })|\leq \linebreak |{\rm Con}_{\WCL }(L,\cdot ^{\Delta })|,|{\rm Con}_{\WDCL }(L,\cdot ^{\nabla })|$, it suffices to calculate the numbers of lattice congruences of the lattices in Theorem \ref{cgwclfin} and Corollary \ref{cgwdclfin} that preserve the weak complementations in Theorem \ref{cgwclfin} and the dual weak complementations in Corollary \ref{cgwdclfin}. Actually, we only need to look at the lattices that appear both in Theorem \ref{cgwclfin} and in Corollary \ref{cgwdclfin}; for instance, if $n\geq 6$, then, by Theorem \ref{cgwclfin}, (\ref{cgwclfin11}), and Corollary \ref{cgwdclfin}, ${\cal C}_{n-5}\oplus ({\cal C}_3\boxplus {\cal C}_5)$ can be organized as a weakly complemented lattice with $2^{n-4}+1$ congruences, but not as a dual weakly complemented lattice with at least $2^{n-4}+1$ congruences.

As in the proof of Theorem \ref{cgwclfin} for weak complementations, we use the remarks at the beginning of Section \ref{ordsum} and Theorem \ref{mainth} to determine the weak dicomplementations. We determine the congruence lattices w.r.t. these weak dicomplementations by using Corollary \ref{cgordsumwdl} along with Proposition \ref{cghsumnontriv} and Corollaries \ref{cghsumdualnontriv} and \ref{cghsumdinontriv}.

${\cal C}_n$ can only be endowed with the trivial weak dicomplementation, w.r.t. which ${\cal C}_1$ has $1=2^{1-1}$ congruences, ${\cal C}_2$ has $2=2^{2-1}$ congruences and, if $n\geq 3$, then ${\rm Con}_{\WDL }({\cal C}_n,\cdot ^{\Delta {\cal C}_n},\cdot ^{\nabla {\cal C}_n})\cong {\rm Con}({\cal C}_{n-2})\oplus {\cal C}_2\cong {\cal C}_2^{n-3}\oplus {\cal C}_2$, thus $|{\rm Con}_{\WDL }({\cal C}_n,\cdot ^{\Delta {\cal C}_n},\cdot ^{\nabla {\cal C}_n})|=2^{n-3}+1$.

We use the notations for weak complementations in the proof of Theorem \ref{cgwclfin} and we also denote, in the case when $L$ has exactly two distinct atoms, by $\cdot ^{\nabla _{{\rm At}(L)}}$ the unique nontrivial dual weak complementation on $L$, as in Proposition \ref{2coat}.

By Corollary \ref{cghsumdinontriv}, ${\cal C}_2^2$ has four weak dicomplementations, w.r.t. which: $|{\rm Con}_{\WDL }({\cal C}_2^2,\cdot ^{\Delta _{{\rm CoAt}({\cal C}_2^2)}},\cdot ^{\nabla _{{\rm At}({\cal C}_2^2)}})|=4=2^{|{\cal C}_2^2|-2}$, while $|{\rm Con}_{\WDL }({\cal C}_2^2,\cdot ^{\Delta {\cal C}_2^2},\cdot ^{\nabla {\cal C}_2^2})|=|{\rm Con}_{\WDL }({\cal C}_2^2,\cdot ^{\Delta _{{\rm CoAt}({\cal C}_2^2)}},\cdot ^{\nabla {\cal C}_2^2})|=|{\rm Con}_{\WDL }({\cal C}_2^2,\cdot ^{\Delta {\cal C}_2^2},\cdot ^{\nabla _{{\rm At}({\cal C}_2^2)}})|=2=2^{|{\cal C}_2^2|-3}$.

By Example \ref{C2xC3}, w.r.t. any of its $16$ weak dicomplementations, ${\cal C}_2\times {\cal C}_3$ has at most $4=2^{|{\cal C}_2\times {\cal C}_3|-4}$ congruences.

If $n\geq 5$, then ${\cal C}_{n-3}\oplus {\cal C}_2^2$ can only be endowed with two weak dicomplementations: the trivial one, $(\cdot ^{\Delta {\cal C}_{n-3}\oplus {\cal C}_2^2},\cdot ^{\nabla {\cal C}_{n-3}\oplus {\cal C}_2^2})$, and $(\cdot ^{\Delta _{{\rm CoAt}({\cal C}_{n-3}\oplus {\cal C}_2^2)}},\cdot ^{\nabla {\cal C}_{n-3}\oplus {\cal C}_2^2})$, w.r.t. which ${\rm Con}_{\WDL }({\cal C}_{n-3}\oplus {\cal C}_2^2,\cdot ^{\Delta {\cal C}_{n-3}\oplus {\cal C}_2^2},\cdot ^{\nabla {\cal C}_{n-3}\oplus {\cal C}_2^2})={\rm Con}_{\WDL }({\cal C}_{n-3}\oplus {\cal C}_2^2,\cdot ^{\Delta _{{\rm CoAt}({\cal C}_{n-3}\oplus {\cal C}_2^2)}},\cdot ^{\nabla {\cal C}_{n-3}\oplus {\cal C}_2^2})\cong {\rm Con}({\cal C}_{n-4})\oplus {\cal C}_2\cong {\cal C}_2^{n-5}\oplus {\cal C}_2$, thus $|{\rm Con}_{\WDL }({\cal C}_{n-3}\oplus {\cal C}_2^2,\cdot ^{\Delta {\cal C}_{n-3}\oplus {\cal C}_2^2},\linebreak \cdot ^{\nabla {\cal C}_{n-3}\oplus {\cal C}_2^2})|=|{\rm Con}_{\WDL }({\cal C}_{n-3}\oplus {\cal C}_2^2,\cdot ^{\Delta _{{\rm CoAt}({\cal C}_{n-3}\oplus {\cal C}_2^2)}},\cdot ^{\nabla {\cal C}_{n-3}\oplus {\cal C}_2^2})|=2^{n-5}+1$. Dually, if $n\geq 5$, then ${\cal C}_2^2\oplus {\cal C}_{n-3}$ can only be endowed with two weak dicomplementations, w.r.t. which $|{\rm Con}_{\WDL }({\cal C}_2^2\oplus {\cal C}_{n-3},\cdot ^{\Delta {\cal C}_2^2\oplus {\cal C}_{n-3}},\cdot ^{\nabla {\cal C}_2^2\oplus {\cal C}_{n-3}})|=|{\rm Con}_{\WDL }({\cal C}_2^2\oplus {\cal C}_{n-3},\cdot ^{\Delta {\cal C}_2^2\oplus {\cal C}_{n-3}},\cdot ^{\nabla _{{\rm At}({\cal C}_2^2\oplus {\cal C}_{n-3})}})|=2^{n-5}+1$.

If $n\geq 6$, then, for any $k\in [2,n-4]$, ${\cal C}_k\oplus {\cal C}_2^2\oplus {\cal C}_{n-k-2}$ can only be endowed with the trivial weak dicomplementation, w.r.t. which ${\rm Con}_{\WDL }({\cal C}_k\oplus {\cal C}_2^2\oplus {\cal C}_{n-k-2},\cdot ^{\Delta {\cal C}_k\oplus {\cal C}_2^2\oplus {\cal C}_{n-k-2}},\cdot ^{\nabla {\cal C}_k\oplus {\cal C}_2^2\oplus {\cal C}_{n-k-2}})\cong {\rm Con}({\cal C}_{k-1}\oplus {\cal C}_2^2\oplus {\cal C}_{n-k-3})\oplus {\cal C}_2\cong {\cal C}_{k-2+2+n-k-4}\oplus {\cal C}_2={\cal C}_2^{n-4}\oplus {\cal C}_2$, thus $|{\rm Con}_{\WDL }({\cal C}_k\oplus {\cal C}_2^2\oplus {\cal C}_{n-k-2},\cdot ^{\Delta {\cal C}_k\oplus {\cal C}_2^2\oplus {\cal C}_{n-k-2}},\cdot ^{\nabla {\cal C}_k\oplus {\cal C}_2^2\oplus {\cal C}_{n-k-2}})|=2^{n-4}+1$.

By Example \ref{exhsums}, if $n\geq 5$, then, for any $k\in [3,n-2]$, ${\cal C}_k\boxplus {\cal C}_{n-k+2}$ has four weak dicomplementations, w.r.t. which:

$|{\rm Con}_{\WDL }({\cal C}_k\boxplus {\cal C}_{n-k+2},\cdot ^{\Delta {\cal C}_k\boxplus {\cal C}_{n-k+2}},\cdot ^{\nabla {\cal C}_k\boxplus {\cal C}_{n-k+2}})|=2^{n+2-6}+1=2^{n-4}+1$;

$|{\rm Con}_{\WDL }({\cal C}_k\boxplus {\cal C}_{n-k+2},\cdot ^{\Delta _{{\rm CoAt}({\cal C}_k\boxplus {\cal C}_{n-k+2})}},\cdot ^{\nabla {\cal C}_k\boxplus {\cal C}_{n-k+2}})|=|{\rm Con}_{\WDL }({\cal C}_k\boxplus {\cal C}_{n-k+2},\cdot ^{\Delta {\cal C}_k\boxplus {\cal C}_{n-k+2}},\cdot ^{\nabla _{{\rm At}({\cal C}_k\boxplus {\cal C}_{n-k+2})}})|\linebreak =\begin{cases}2^{n-k+2-4}+1=2^{n-5}+1,& k=3,\\ 2^{n-6}+1,& k\geq 4;\end{cases}$

$|{\rm Con}_{\WDL }({\cal C}_k\boxplus {\cal C}_{n-k+2},\cdot ^{\Delta _{{\rm CoAt}({\cal C}_k\boxplus {\cal C}_{n-k+2})}},\cdot ^{\nabla _{{\rm At}({\cal C}_k\boxplus {\cal C}_{n-k+2})}})|=\begin{cases}2\leq 2^{n-4},& k\leq 4,n-k\leq 2;\\ 2^{n-k-3}+1\leq 2^{n-6}+1,& k\leq 4,n-k\geq 3;\\ 2^{n-8}+1,& k\geq 5,n-k\geq 3.\end{cases}$

If $n\geq 8$ and $L\cong {\cal C}_k\oplus ({\cal C}_2\times {\cal C}_3)\oplus {\cal C}_{n-k-4}$ for some $k\in [2,n-6]$ or $n\geq 9$ and $L\cong {\cal C}_{n-r-s-4}\oplus {\cal C}_2^2\oplus {\cal C}_r\oplus {\cal C}_2^2\oplus {\cal C}_s$ for some $r,s\in \N ^*$ such that $s>1$ and $r+s\leq n-6$, then $L$ can only be endowed with the trivial weak dicomplementation and ${\rm Con}_{\WDL }(L,\cdot ^{\Delta L},\cdot ^{\nabla L})\cong {\cal C}_2^{n-5}\oplus {\cal C}_2$, thus $|{\rm Con}_{\WDL }(L,\cdot ^{\Delta L},\cdot ^{\nabla L})|=2^{n-5}+1$.\end{proof}

\section*{Acknowledgements}

This work was supported by the research grant number IZSEZO\_186586/1, awarded to the project {\em Re\-ti\-cu\-la\-tions of Concept Algebras} by the Swiss National Science Foundation, within the programme Scientific Exchanges.

\end{document}